\documentclass[11pt]{article}
\usepackage[]{amsmath,amssymb,amsthm,amsfonts,amsbsy, enumerate,eucal}
\usepackage{verbatim}
\usepackage{hyperref}
\usepackage{array}
\usepackage{xfrac}
\usepackage{color}
\usepackage{mathtools}
\usepackage{ulem} 
\usepackage[T1]{fontenc}
\usepackage[english]{babel}
\usepackage{graphicx}

\usepackage{cite}
\usepackage{tikz}

\theoremstyle{definition}

\newtheorem{definition}{Definition}[section]
\newtheorem{example}[definition]{Example}

\newtheorem{remark}[definition]{Remark}

\newtheorem{note}[definition]{Note}

\newtheorem{notation}[definition]{Notation}

\newcommand\scalemath[2]{\scalebox{#1}{\mbox{\ensuremath{\displaystyle #2}}}}

\theoremstyle{plain}
\newtheorem{theorem}[definition]{Theorem}
\newtheorem{lemma}[definition]{Lemma}

\newtheorem{proposition}[definition]{Proposition}

\typeout{Substyle for letter-sized documents. Released 24 July 1992}


\usepackage[top=3cm, bottom=3cm, left=2.5cm, right=2.5cm]{geometry}

\def\I{\mathbb I}

\def\C{\mathbb C}

\def\H{\hat{H}_{q}}
\def\Hqi{\hat{H}_{q^{-1}}}

\def\MX{{\rm Mat}_{X}(\mathbb{C})}

\def\Mx{M\hat{x}}
\def\Mxp{M\hat{x}^{\perp}}
\def\MC{M\hat{C}}

\def\p{{\perp}}
\def\<{\langle}
\def\>{\rangle}
\def\diag{{\rm diag}}
\def\bdiag{{\rm blockdiag}}

\def\X{{\bf X}}
\def\Y{{\bf Y}}
\def\A{{\bf A}}
\def\B{{\bf B}}

\def\W{{\bf W}}
\def\T{{\bf T}}
\def\k{\kappa}
\def\v0{{v^{\perp}_0}}
\def\y{{\bf y}}

\def\wt{\widetilde}

\newcommand{\Ga}{\ensuremath{\Gamma}}
\newcommand{\tht}{\ensuremath{\theta}}

\newcommand{\mcal}{\ensuremath{\mathcal}}

\begin{document}
\title{{\bf 
Nonsymmetric Askey-Wilson polynomials and 
$Q$-polynomial distance-regular graphs}}

\author{Jae-Ho Lee \\ \\
\Large{\it Dedicated to Professor Paul Terwilliger on his 60th birthday}
}

\date{}

\maketitle

\begin{abstract}

 In his famous theorem (1982), Douglas Leonard characterized the $q$-Racah polynomials and their relatives in the Askey scheme from the duality property of $Q$-polynomial distance-regular graphs.
In this paper we consider a nonsymmetric (or Laurent) version of the $q$-Racah polynomials in the above situation.
Let $\Ga$ denote a $Q$-polynomial distance-regular graph that contains a Delsarte clique $C$.
 Assume that $\Ga$ has $q$-Racah type.
 Fix a vertex $x \in C$.
 We partition the vertex set of $\Ga$ according to the path-length distance to both $x$ and $C$. 
 The linear span of the characteristic vectors corresponding to the cells in this partition has an irreducible module structure for the universal double affine Hecke algebra $\H$ of type $(C^{\vee}_1, C_1)$.
 From this module, we naturally obtain a finite sequence of orthogonal Laurent polynomials.
 We prove the orthogonality relations for these polynomials, using the $\H$-module and the theory of Leonard systems.
Changing $\H$ by $\Hqi$ we show how our Laurent polynomials are related to the nonsymmetric Askey-Wilson polynomials, and therefore how our Laurent polynomials can be viewed as nonsymmetric $q$-Racah polynomials.

\bigskip
\noindent
{\bf Keywords}. Askey-Wilson polynomial, nonsymmetric Askey-Wilson polynomial,
$q$-Racah polynomial, nonsymmetric $q$-Racah polynomial, 
DAHA of rank one, distance-regular graph, $Q$-polynomial.

\hfil\break
\noindent {\bf 2010 Mathematics Subject Classification}.
05E30, 33D45, 33D80. 
 \end{abstract}
\section{Introduction}\label{Intro}

The nonsymmetric Askey-Wilson polynomials were first treated by Sahi \cite{SS}.
They are expressed as certain Laurent polynomials, and 
are obtained in the double affine Hecke algebra (DAHA)
of type $(C^{\vee}_1, C_1)$ as eigenfunctions of the Cherednik-Dunkl operator 
on the basic representation for the algebra. The nonsymmetric
Askey-Wilson polynomials along with the DAHA of rank one were
studied in algebraic aspects by Noumi and Stokman \cite{NS}, Macdonald \cite[Section 6.6]{IM} and Koornwinder \cite{TK,KB}. 
In the present paper we study the nonsymmetric Askey-Wilson polynomials 
in a combinatorial aspect, using a $Q$-polynomial distance-regular graph that contains a Delsarte 
clique.

\medskip
The $Q$-polynomial property for distance-regular graphs was introduced by Delsarte \cite{PD}. 
Since then, this property has been receiving substantial attention from many mathematicians; see e.g. \cite{BI, BCN, CG,DL, DKT, PT:T-alg}.
In \cite{DL}, Leonard characterized the $q$-Racah polynomials and their relatives in the Askey scheme using the duality property of $Q$-polynomial distance-regular graphs (see also \cite[Section~III.5]{BI}). 
Terwilliger defined the subconstituent algebra (or Terwilliger algebra) as a 
method of the study of $Q$-polynomial distance-regular graphs \cite{PT:T-alg, PT;T-alg;II, PT;T-alg;III}. 
This algebra has been a significant tool in the study of $Q$-polynomial distance-regular graphs and its connections to Lie theory, quantum algebras, 
and coding theory have also been revealed; see e.g. \cite{ITT,IT2,IT4,AS,GST,IT3,HT}. 

\medskip
In \cite{JHL} the author showed a relationship between $Q$-polynomial distance-regular graphs and the universal DAHA $\H$ of type $(C^{\vee}_1,C_1)$  using the 
Terwilliger algebra. We briefly summarize this result.
Let $\Ga$ denote a $Q$-polynomial distance-regular graph that contains a Delsarte clique $C$. Assume that $\Ga$ has $q$-Racah type. Fix a vertex $x \in C$. Partitioning the vertex set of $\Ga$ according to the path-length distance to both $x$ and $C$ gives a two-dimensional equitable partition, which takes a staircase shape consisting of nodes and edges. Let $\W$ denote the $\C$-vector space spanned by the characteristic vectors corresponding to the nodes of the staircase shape of the partition. 
Then $\W$ has an irreducible module structure for the algebra $\H$ \cite[Sections 11,12]{JHL}.

\medskip
From the above staircase picture of $\W$, the $q$-Racah polynomials \cite{AW2} arise naturally as follows.
Roughly speaking, horizontal edges correspond to a sequence of $q$-Racah polynomials and vertical edges correspond to another sequence of $q$-Racah polynomials. 
In the present paper, using the irreducible $\H$-module $\W$, we define a finite sequence of certain Laurent polynomials that correspond to nodes of the staircase picture.
We denote these polynomials by $\varepsilon^{\sigma}_i$, where $\sigma \in \{+,-\}$. 
The $\varepsilon^{\sigma}_i$ are considered as nonsymmetric $q$-Racah polynomials, a discrete version of the nonsymmetric Askey-Wilson polynomials.
And then we treat the orthogonality relations for $\varepsilon^{\sigma}_i$,
using the $\H$-module $\W$ and the theory of Leonard systems \cite{PT:Madrid}. 
This orthogonality is new and it can be viewed as a discrete version of the orthogonality for the nonsymmetric Askey-Wilson polynomials, which was worked by Koornwinder and Bouzeffour \cite[Section 5]{KB}.

\medskip
The paper is organized as follows. 
In Section 2, we review some basic definitions, concepts and notation
regarding nonsymmetric Askey-Wilson polynomials and DAHAs of type $(C^{\vee}_1, C_1)$.
In Sections 3 and 4, we review some backgrounds concerning 
$Q$-polynomial distance-regular graphs, the Terwilliger algebra, Leonard systems, and parameter arrays. 
Our vector space $\W$ appears along with a comprehensible picture in Section 3.
In Section 5, we study the module for the Terwilliger algebra $T$ on $\W$ and the associated $q$-Racah polynomials. 
The $T$-module $\W$ decomposes into the direct sum of two irreducible $T$-modules, and the Leonard system corresponding to each $T$-module gives rise to a sequence of the $q$-Racah polynomials. We express these polynomials and the related formulae in terms of certain scalars $a,b,c,d$.
In Section 6, we recall the algebra $\H$ and its properties. And we display the $\H$-module $\W$ in terms of the scalars $a,b,c,d$. For this module, we describe  the action of $\X:=t_3t_0 \in \H$.

\medskip
In Section 7,  we define the Laurent polynomial $g$ that plays a role to connect the above two irreducible $T$-submodules of $\W$. 
Using $g$ and the $\H$-module $\W$, we define a finite sequence of Laurent polynomials $\varepsilon^{\sigma}_i (\sigma \in \{+,-\})$.
Moreover, for the element $\Y:=t_0t_1 \in \H$ we describe the action of $\varepsilon^{\sigma}_i[\Y]$ on the $\H$-module $\W$.
In Section 8, we compute the eigenvalues/ eigenvectors of $\Y$ on $\W$.
Using the results, in Section 9 we define a bilinear form on the vector space $L$ spanned by $\{\varepsilon^{\sigma}_i\}^{D-1}_{i=0}$. With respect to this bilinear form we prove the orthogonality relations for the Laurent polynomials $\varepsilon^{\sigma}_i$.
In Section 10, we consider the algebra $\Hqi$ by changing $q$ by $q^{-1}$.
We discuss how the algebra $\Hqi$ is related to the (ordinary) DAHA $\tilde{\mathfrak{H}}$ of type $(C^{\vee}_1, C_1)$. 
We, further, redescribe the Laurent polynomials $\varepsilon^{\sigma}_i$ and the associated formulae in terms of $q^{-1}$-version.
In Section 11, we make a normalization for $\varepsilon^{\sigma}_i$ of $q^{-1}$-version and discuss how these polynomials are related to the nonsymmetric Askey-Wilson polynomials.
The paper ends with a brief summary and direction for future work in Section 12.
An Appendix provides some explicit data involving the $\H$-action on $\W$.

\begin{notation}
Throughout this paper we assume $q \in \C^*$ is not a root of unity. 
For $a\in \C$, 
\begin{equation}\label{(a;q)n}
(a;q)_n:=(1-a)(1-aq)\cdots(1-aq^{n-1}),
\end{equation}
where $n=0,1,2,\ldots$. For $a_1, a_2, \ldots, a_r \in \C$, 
$$
(a_1,a_2, \ldots, a_r;q)_n := (a_1;q)_n(a_2;q)_n \cdots (a_r;q)_n.
$$
Let $\C[z,z^{-1}]$ denote the space of the Laurent polynomials with a variable $z$. 
We write an element of $\C[z,z^{-1}]$ by $f[z]$.
We say $f[z]$ is {\it symmetric} if $f[z]=f[z^{-1}]$, otherwise {\it nonsymmetric}.
Note that a symmetric Laurent polynomial $f[z]$ can be viewed as an ordinary 
polynomial $f(x)$ in $x=z+z^{-1}$.
\end{notation}

\section{Nonsymmetric Askey-Wilson polynomials}\label{NonsymAWpolys}

In this section we review some backgrounds concerning the Askey-Wilson polynomials,
DAHAs of type $(C^{\vee}_1,C_1)$, and the nonsymmetric Askey-Wilson polynomials.
For more background, see \cite{AW,TK,IM,NS}. We acknowledge that 
notation and presentations of
the nonsymmetric Askey-Wilson polynomials and 
the DAHA of type $(C^{\vee}_1,C_1)$
are taken from Koornwinder's papers \cite{TK, KB}. Throughout this section, let $a,b,c,d \in \C^*$ be such that
\begin{equation*}
\begin{split}
&ab,ac,ad,bc,bd,cd,abcd \notin \{q^{-m} \mid m=0,1,2, \ldots\}.
\end{split}
\end{equation*}
We now recall the Askey-Wilson polynomials \cite{AW}. For $n=0,1,2,\ldots$ define a polynomial 
\begin{align}
\nonumber
p_n(x)=p_n[z;a,b,c,d \mid q] & :=  \sum^{\infty}_{i=0} \frac{(q^{-n}, abcdq^{n-1}, az, az^{-1};q)_i}
{(ab, ac, ad, q;q)_i}q^i\\
\label{AW;abcd}
& = {_4}\phi_3
\left(
\begin{matrix}
q^{-n}, ~ abcdq^{n-1}, ~ az, ~  az^{-1} \\
ab, ~ ac, ~ ad
\end{matrix}
~\middle| ~ q,~q
\right),
\end{align}
where $x=z+z^{-1}$.
The last equality follows from the definition of basic hypergeometric series \cite[p. 4]{GR}. 
Observe that $(q^{-n};q)_i=0$ if $i > n$. We call $p_n$ the $n$-th {\it Askey-Wilson polynomial}. Consider the {\it monic} Askey-Wilson polynomials
\begin{equation*}
P_n = P_n[z;a,b,c,d \mid q] := \frac{(ab,ac,ad;q)_n}{a^n(abcdq^{n-1};q)_n}
{_4}\phi_3
\left(
\begin{matrix}
q^{-n}, ~ abcdq^{n-1}, ~ az, ~  az^{-1} \\
ab, ~ ac, ~ ad
\end{matrix}
~\middle| ~ q,~q
\right).
\end{equation*}
Note that $P_n$ is symmetric. For $n=1,2,\ldots$, define a Laurent polynomial \cite[Section 4]{KB}
\begin{equation}\label{nonsym;Q}
Q_n := a^{-1}b^{-1}z^{-1}(1-az)(1-bz)P_{n-1}[z;qa,qb,c,d\mid q].
\end{equation}

\begin{definition}\label{Koornwinder;NonsymAW} \cite[\S4 (4.2)--(4.3)]{KB}
The {\it nonsymmetric Askey-Wilson polynomials}  are defined by
\begin{align*}
& E_{-n} := P_n-Q_n && (n=1,2,\ldots),\\
& E_n := P_n-\frac{ab(1-q^n)(1-cdq^{n-1})}{(1-abq^n)(1-abcdq^{n-1})}Q_n && (n=0,1,2,\ldots),
\end{align*}
where $(1-q^n)Q_n := 0$ for $n=0$.
\end{definition}

\noindent
The DAHA of type $(C^{\vee}_1, C_1)$, denoted by $\tilde{\mathfrak{H}}$ \cite[Section 3]{TK}, is defined by the generators $Z, Z^{-1}, T_0, T_1$ and relations
\begin{align*}
& (T_1+ab)(T_1+1)=0, && (T_0+q^{-1}cd)(T_0+1)=0, \\
& (T_1Z+a)(T_1Z+b)=0, && (qT_0Z^{-1}+c)(qT_0Z^{-1}+d)=0.
\end{align*}
The algebra $\tilde{\mathfrak{H}}$ has a faithful representation on $\C[z,z^{-1}]$, which is called the {\it basic representation} \cite[Section 3]{TK}.
On the basic representation, by \cite[Theorem 4.1]{TK} each of $E_{\pm n}$ is the eigenfunction for $Y=T_1T_0$;
\begin{align}\label{Y-act1}
&YE_{-n} = q^{-n}E_{-n} \qquad (n=1,2, \ldots), \\
\label{Y-act2}
&YE_n = q^{n-1}abcdE_n \qquad (n=0,1,2, \ldots).
\end{align}
Fix square roots $a^{1/2}, b^{1/2}, c^{1/2}, d^{1/2}$ and $q^{1/2}$.
Consider $q^{1/2}\eta Y$, where $\eta=a^{-1/2}b^{-1/2}c^{-1/2}d^{-1/2}$. From (\ref{Y-act1}) and (\ref{Y-act2}), it follows
\begin{align}\label{q;eta;Y1}
&q^{1/2}\eta YE_{-n} = q^{-n+\frac{1}{2}}\eta E_{-n} \qquad (n=1,2, \ldots), \\
\label{q;eta;Y2}
&q^{1/2}\eta YE_n = q^{n-\frac{1}{2}}\eta^{-1}E_n \qquad (n=0,1,2, \ldots).
\end{align}
By (\ref{q;eta;Y1}) and (\ref{q;eta;Y2}), we give a staircase diagram that describes the structure of eigenspaces of 
$q^{1/2}\eta Y$.

\begin{center}
\scalemath{0.65}{
\begin{tikzpicture}
  [scale=.8,thick,auto=left, every node/.style={circle}] 
  \node[fill=black,label=right:{\Large$q^{-1/2}\eta^{-1}, E_0$}] (n1) at (0,0) {};
  \node[draw, label=left:{\Large$q^{-1/2}\eta,E_{-1}$}] (n2) at (0,2)  {};
  \node[fill=black,label=right:{\Large$q^{1/2}\eta^{-1},E_1$}] (n3) at (2,2)  {};
  \node[draw,label=left:{\Large$q^{-3/2}\eta,E_{-2}$}] (n4) at (2,4) {};
  \node[fill=black,label=right:{\Large$q^{3/2}\eta^{-1},E_2$}] (n5) at (4,4)  {};
  \node[draw,label=left:{\Large$q^{-5/2}\eta,E_{-3}$}] (n6) at (4,6)  {};
  \node[fill=black,label=right:{\Large$q^{5/2}\eta^{-1},E_3$}] (n7) at (6,6)  {};
  \node[draw,label=left:{\Large$q^{-7/2}\eta, E_{-4}$}] (n8) at (6,8)  {};
  \node[fill=black,label=right:{\Large$q^{7/2}\eta^{-1}, E_4$}] (n9) at (8,8)  {};
  \node[draw,label=left:{\Large$$}] (n10) at (8,10)  {};
  \node[label=right:{\large$$}] (n11) at (10,10)  {};  

  \foreach \from/\to in 
  {n1/n2,n2/n3,n3/n4,n4/n5,n5/n6,n6/n7,n7/n8,n8/n9}
      \draw (\from) -- (\to);
      \draw (n1) -- (n2) ;
      \draw (n3) -- (n4) ;
      \draw (n5) -- (n6) ;
      \draw (n7) -- (n8) ;
      \draw (n9) -- (n10) [dashed, ];
      \draw (n10) -- (n11) [dashed];
            
\end{tikzpicture}}\\
{Figure 1 : The eigenspaces of $q^{1/2}\eta Y$}
\end{center}

We remark that each white node represents the eigenspace of $q^{1/2}\eta Y$ corresponding the eigenvalue $q^{-n+\frac{1}{2}}\eta$ and the eigenvector $E_{-n}$ for $n=1,2,\ldots$, and each black node represents the eigenspace of $q^{1/2}\eta Y$ corresponding the eigenvalue $q^{n-\frac{1}{2}}\eta^{-1}$ and the eigenvector $E_n$ for $n=0,1,2,\ldots$.
Observe that the product of eigenvalues of each vertical edge is equal to $q^{-1}$ and the product of eigenvalues of each horizontal edge is equal to $1$.

\medskip
We discuss the orthogonality relations for the Askey-Wilson polynomials. 
By \cite[Theorems I.4.4 and II.3.2]{TC} (or \cite[(3.6)--(3.8)]{KB}), 
there exists a positive Borel measure 
$\mu=\mu_{a,b,c,d;q}$ on $\mathbb{R}$ with $\mu(\mathbb{R})=1$ such that
\begin{equation}\label{AW;orthogonality}
\<P_m, P_n\>_{a,b,c,d;q} := \int_{\mathbb{R}}P_m(x)P_n(x)d\mu(x)=h_n\delta_{m,n},
\end{equation}
where 
\begin{equation*}
h_n=h^{a,b,c,d;q}_n=\frac{(q,ab,ac,ad,bc,bd,cd;q)_n}{(abcd;q)_{2n}(abcdq^{n-1};q)_n}.
\end{equation*}

In \cite{KB} Koorwinder and Bouzeffour introduced a presentation of nonsymmetric Laurent polynomials as two-dimensional vector-valued polynomials. By \cite[p. 7]{KB}, we can identify a Laurent polynomial $f$ with 2-vector-valued symmetric Laurent polynomial $(f_1, f_2)^t$, where $t$ denotes transpose. In particular, from \cite[(4.10) and (4.11)]{KB}
\begin{align}
\label{E;2-vec;form;-n}
&E_{-n}=
\begin{pmatrix}
P_n[z;a,b,c,d \mid q] \\
-a^{-1}b^{-1}P_{n-1}[z;aq,bq,c,d\mid q]
\end{pmatrix}
\qquad (n=1,2,\ldots), \\
\label{E;2-vec;form;n}
& E_n = 
\begin{pmatrix}
P_n[z;a,b,c,d \mid q] \\
-\dfrac{(1-q^n)(1-cdq^{n-1})}{(1-abq^n)(1-abcdq^{n-1})}P_{n-1}[z;aq,bq,c,d\mid q]
\end{pmatrix}
\qquad (n=0,1,2,\ldots),
\end{align}
where $(1-q^n)P_{n-1} := 0$ for $n=0$.
In \cite[Section 5]{KB}, the authors introduced a symmetric bilinear form 
$\<\cdot,\cdot \>$ on $\C[z,z^{-1}]$:
\begin{equation}\label{bilinear;TK}
\<g,h\> = \<(g_1,g_2)^t,(h_1,h_2)^t\> = \<g_1,h_1\>_{a,b,c,d;q}+C\<g_2,h_2\>_{aq,bq,c,d;q},
\end{equation}
where $\< \cdot, \cdot\>_{a,b,c,d;q}$ is from (\ref{AW;orthogonality}) and
$$
C = -ab\frac{(1-ab)(1-abq)(1-ac)(1-ad)(1-bc)(1-bd)}{(1-abcd)(1-abcdq)}.
$$
Note that the nonsymmetric Askey-Wilson polynomials $E_n (n \in \mathbb{Z})$ are orthogonal with respect to the bilinear form (\ref{bilinear;TK}). This bilinear form is positive definite with some conditions for the scalars $a,b,c,d$; 
see \cite[Proposition 5.1]{KB} for details.

\begin{lemma}\label{norm;E} 
With respect to the bilinear form {\rm(\ref{bilinear;TK})},
\begin{enumerate}
\item[\rm(i)] for $n=1,2,\ldots,$
$$
\<E_{-n},E_{-n}\> = \frac{(ab-1)(1-abcdq^{2n-1})}{ab(1-q^n)(1-cdq^{n-1})}\frac{(q,ab,ac,ad,bc,bd,cd;q)_n}{(abcd;q)_{2n}(abcdq^{n-1};q)_n}.
$$
\item[\rm(ii)] for $n=0,1,2,\dots,$
$$
\<E_{n},E_{n}\> = \frac{(1-ab)(1-abcdq^{2n-1})}{(1-abq^n)(1-abcdq^{n-1})}\frac{(q,ab,ac,ad,bc,bd,cd;q)_n}{(abcd;q)_{2n}(abcdq^{n-1};q)_n}.
$$
\end{enumerate}
\end{lemma}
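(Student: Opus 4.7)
The plan is to substitute the two-component presentations (\ref{E;2-vec;form;-n}) and (\ref{E;2-vec;form;n}) directly into the bilinear form (\ref{bilinear;TK}) and then reduce everything to the known symmetric norms $h^{a,b,c,d;q}_n$ supplied by (\ref{AW;orthogonality}). For part (i), using (\ref{E;2-vec;form;-n}) this gives
$$
\<E_{-n},E_{-n}\> \;=\; \<P_n[\cdot;a,b,c,d\mid q],P_n[\cdot;a,b,c,d\mid q]\>_{a,b,c,d;q}\;+\;\frac{C}{a^2b^2}\,\<P_{n-1}[\cdot;aq,bq,c,d\mid q],P_{n-1}[\cdot;aq,bq,c,d\mid q]\>_{aq,bq,c,d;q},
$$
which by (\ref{AW;orthogonality}) equals $h^{a,b,c,d;q}_n+(C/a^2b^2)\,h^{aq,bq,c,d;q}_{n-1}$. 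A parallel expansion, using (\ref{E;2-vec;form;n}), handles part (ii) with the coefficient $(1-q^n)(1-cdq^{n-1})/[(1-abq^n)(1-abcdq^{n-1})]$ appearing squared in front of $h^{aq,bq,c,d;q}_{n-1}$.

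The next step is the key identity
$$
\frac{h^{aq,bq,c,d;q}_{n-1}}{h^{a,b,c,d;q}_n}\;=\;\frac{(1-abq^n)(1-abcd)(1-abcdq)(1-abcdq^{n-1})}{(1-q^n)(1-ab)(1-abq)(1-ac)(1-ad)(1-bc)(1-bd)(1-cdq^{n-1})},
$$
which I would obtain by applying the shift identities $(\alpha q;q)_{n-1}=(\alpha;q)_n/(1-\alpha)$ and $(\alpha;q)_{n-1}=(\alpha;q)_n/(1-\alpha q^{n-1})$ factor-by-factor to the numerators and the two denominator Pochhammers $(abcdq^2;q)_{2n-2}$ and $(abcdq^n;q)_{n-1}$. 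Combined with $C/a^2b^2 = -(1-ab)(1-abq)(1-ac)(1-ad)(1-bc)(1-bd)/[ab(1-abcd)(1-abcdq)]$, most factors cancel and I obtain
$$
\frac{C}{a^2b^2}\cdot\frac{h^{aq,bq,c,d;q}_{n-1}}{h^{a,b,c,d;q}_n}\;=\;-\,\frac{(1-abq^n)(1-abcdq^{n-1})}{ab(1-q^n)(1-cdq^{n-1})}.
$$

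At this point part (i) reduces to factoring the four-term expression
$$
ab(1-q^n)(1-cdq^{n-1})-(1-abq^n)(1-abcdq^{n-1})\;=\;(ab-1)(1-abcdq^{2n-1}),
$$
after which $\<E_{-n},E_{-n}\>$ emerges in the claimed form. For part (ii) the same ratio is multiplied by $\alpha^2$ with $\alpha=(1-q^n)(1-cdq^{n-1})/[(1-abq^n)(1-abcdq^{n-1})]$; the analogous simplification culminates in the factorization
$$
(1-abq^n)(1-abcdq^{n-1})-ab(1-q^n)(1-cdq^{n-1})\;=\;(1-ab)(1-abcdq^{2n-1}),
$$
giving the second formula.

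The routine but delicate obstacle is the Pochhammer-shift bookkeeping: the numerator parameters $ab,ac,ad,bc,bd$ each shift differently under $(a,b)\mapsto(aq,bq)$ (for example $(ab;q)_n\mapsto(abq^2;q)_{n-1}$ loses two factors, whereas $(ac;q)_n\mapsto(acq;q)_{n-1}$ loses only the first factor, and $(cd;q)_n\mapsto(cd;q)_{n-1}$ loses the last), and the two denominator Pochhammers likewise re-index. Once that ratio is in hand, the final scalar identities $ab(1-q^n)(1-cdq^{n-1})\mp(1-abq^n)(1-abcdq^{n-1})=\pm(ab-1)(1-abcdq^{2n-1})$ are a one-line expansion, so the proof is essentially complete.
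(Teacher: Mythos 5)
Your proposal is correct and follows exactly the paper's route: substitute the two-vector presentations into the bilinear form, reduce to the symmetric norms $h_n^{a,b,c,d;q}$, and simplify; the paper's own proof simply says ``compute the right-hand side using the orthogonality relation,'' and your Pochhammer-shift ratio and the final factorizations $ab(1-q^n)(1-cdq^{n-1})\mp(1-abq^n)(1-abcdq^{n-1})=\pm(ab-1)(1-abcdq^{2n-1})$ check out, including the distinction that the second component carries the extra $a^{-1}b^{-1}$ only in the $E_{-n}$ case.
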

\begin{proof} 
(i) From (\ref{E;2-vec;form;-n}), we set $f_1=P_n[z;a,b,c,d \mid q]$ and $f_2=-a^{-1}b^{-1}P_{n-1}[z;aq,bq,c,d\mid q]$. Then by (\ref{bilinear;TK})
$$
\<E_{-n}, E_{-n}\> = \<(f_1,f_2)^t,(f_1,f_2)^t\>=\<f_1,f_1\>_{a,b,c,d;q}+C\<f_2,f_2\>_{aq,bq,c,d;q}.
$$
Compute the right-hand side of the above equation by using (\ref{AW;orthogonality}). The result follows.\\
(ii) Similar to (i).
\end{proof}

\section{$Q$-polynomial distance-regular graphs}\label{S3;DRG}

We recall some basic concepts and notation concerning $Q$-polynomial distance-regular graphs. For more information we refer to the reader to \cite{BI, BCN, PT:T-alg}. Let $X$ denote a nonempty finite set. Define $\MX$ to be the $\C$-algebra consisting of the square matrices indexed by $X$ with entries in $\C$. Let $V$ denote the $\C$-vector space consisting of column vectors indexed by $X$ with entries in $\C$. View $V$ as a left $\MX$-module. We endow $V$ with the Hermitian inner product $\< \cdot, \cdot \>_V$ such that $\<u,v\>_V=u^t\bar{v}$, where $t$ denotes transpose and $\bar{}$ denotes complex conjugate. 
We abbreviate $\Vert u \Vert^2 = \<u,u\>_V$ for all $u \in V$.
For $y \in X$ let $\hat{y}$ denote the vector in $V$ with a $1$ in the $y$-coordinate and $0$ in all other coordinates. For $Y\subseteq X$ define $\hat{Y}=\sum_{y\in Y}\hat{y}$, called the {\it characteristic vector} of $Y$.

\smallskip
Let $\Ga$ denote a simple connected graph with vertex set $X$ and diameter $D \geq 3$, where $D := \max\{\partial(x,y)\mid x,y \in X\}$ and where $\partial$ is the shortest path-length distance function. For $x \in X$, define 
\begin{equation}\label{Ga_i}
\Ga_i(x) = \{ y \in X \mid \partial(x,y)=i \} \qquad ( 0 \leq i \leq D).
\end{equation}
We say that $\Ga$ is {\it distance-regular} whenever for $0 \leq i \leq D$ and vertices $x,y \in X$ with $\partial(x,y)=i$ the numbers
\begin{equation}\label{intersection;numbers}
c_i = |\Ga_{i-1}(x)\cap \Ga_1(y)|, \qquad a_i = |\Ga_i(x) \cap \Ga_1(y)|, \qquad b_i=|\Ga_{i+1}\cap\Ga_1(y)|,
\end{equation}
are independent of $x$ and $y$. Define the matrix $A_i \in \MX$ by $(A_i)_{xy}=1$ if $\partial(x,y)=i$ and $0$ otherwise. We call $A_i$ the $i$-th {\it distance matrix} of $\Ga$. In particular, $A=A_1$ is called the {\it adjacency matrix} of $\Ga$. Let $M$ denote the subalgebra of $\MX$ generated by $A$, called the {\it adjacency algebra}. By definition, every element in $M$ forms a polynomial in $A$. The graph $\Ga$ satisfies the {\it $P$-polynomial property}, that is, for $0 \leq i \leq D$ there exists a polynomial $f_i \in \C[x]$ such that $\deg(f_i)=i$ and $f_i(A)=A_i$.

\smallskip
We recall the notion of $Q$-polynomial property.  By \cite[p.~127]{BCN}, the elements $\{A_i\}^D_{i=0}$ form a basis for $M$. Since $A$ is real symmetric and generates $M$, $A$ has $D+1$ mutually distinct real eigenvalues, denoted by $\tht_0, \tht_1, \ldots, \tht_D$. Let $E_i \in \MX$ denote the orthogonal projection onto the eigenspace of $\tht_i (0 \leq i \leq D)$. We call $E_i$ the {\it $i$-th primitive idempotent} of $\Ga$. 
Note that $\{E_i\}^D_{i=0}$ form a basis for $M$.
We say that $\Ga$ is {\it $Q$-polynomial} with respect to the ordering $E_0, E_1, \ldots E_D$ whenever there exists $f^*_i \in \C[x]$ such that $\deg(f^*_i)=i$ and $f^*_i(E_1)=E_i$, where the multiplication of $M$ is under the entrywise product \cite[p.~193]{BI}. Throughout the paper we assume that $\Ga$ is a $Q$-polynomial distance-regular graph.

\smallskip
By a {\it clique} we mean a nonempty subset $C$ of $X$ such that any two distinct vertices in $C$ are adjacent. It is known that $|C| \leq 1-k/\tht_{\rm min}$ \cite[Proposition 4.4.6]{BCN}, where $\tht_{\rm min}$ is the minimum eigenvalue of $A$. We say that $C$ is {\it Delsarte} when $|C| = 1-k/\tht_{\rm min}$. Assume that $\Ga$ contains a Delsarte clique $C$. For $0 \leq i \leq D-1$, we define 
\begin{equation}\label{Ci}
C_i := \{ y \in X \mid \partial(y,C)=i \},
\end{equation}
where $\partial(y,C) = \min\{ \partial(y,z) \mid z \in C \}$. For the rest of the paper we fix a vertex $x \in C$. Recall $\Ga_i = \Ga_i(x) ~ (0 \leq i \leq D)$ and $C_i ~ (0 \leq i \leq D-1)$ from (\ref{Ga_i}) and (\ref{Ci}). For $0 \leq i \leq D-1$ define 
\begin{equation}\label{Ci_pm}
C^-_i=\Ga_i \cap C_i, \qquad \qquad C^+_i = \Ga_{i+1}\cap C_i \qquad \qquad (0 \leq i \leq D-1).
\end{equation}

\begin{center}
\scalemath{0.65}{
\begin{tikzpicture}
  [scale=1,thick,auto=left,every node/.style={circle,draw}] 
  \node (n1) at (0,0) {$C^-_0$};
  \node (n2) at (0,2)  {$C^+_0$};
  \node (n3) at (2,2)  {$C^-_1$};
  \node (n4) at (2,4) {$C^+_1$};
  \node (n5) at (4,4)  {$C^-_2$};
  \node (n6) at (4,6)  {$C^+_2$};   
  \node (n7) at (6,6)  {$C^-_3$};
  \node (n8) at (6,8)  {$C^+_3$};
 
  \node[fill=black!10] (c0) at (0,-2.5) {${~}C^{~}_0$};
  \node[fill=black!10] (c1) at (2,-2.5) {${~}C^{~}_1$};
  \node[fill=black!10] (c2) at (4,-2.5) {${~}C^{~}_2$};
  \node[fill=black!10] (c3) at (6,-2.5) {${~}C^{~}_3$};
  
  \node[fill=blue!20] (r0) at (-2.5,0) {${~}\Ga_0$};
  \node[fill=blue!20] (r1) at (-2.5,2) {${~}\Ga_1$};
  \node[fill=blue!20] (r2) at (-2.5,4) {${~}\Ga_2$};
  \node[fill=blue!20] (r3) at (-2.5,6) {${~}\Ga_3$};
  \node[fill=blue!20] (r4) at (-2.5,8) {${~}\Ga_4$};

  \foreach \from/\to in {n1/n2,n2/n3,n3/n4,n4/n5,n5/n6,n6/n7,n7/n8,n1/n3,n3/n5,n5/n7,n2/n4,n4/n6,n6/n8, c0/c1,c1/c2,c2/c3, r0/r1, r1/r2, r2/r3, r3/r4}
    \draw (\from) -- (\to);
    
    \draw (c0) -- (n1) [dashed];
    \draw (c1) -- (n3) [dashed];
    \draw (c2) -- (n5) [dashed];
    \draw (c3) -- (n7) [dashed];
    
    \draw (r0) -- (n1) [dashed];
    \draw (r1) -- (n2) [dashed];
    \draw (r2) -- (n4) [dashed];
    \draw (r3) -- (n6) [dashed];
    \draw (r4) -- (n8) [dashed];

\end{tikzpicture}}\\
\medskip
Figure {2} : The set $\{C^{\pm}_i\}$ of $X$  when $d=4$
\end{center}

\noindent
Note that each of $C^{\pm}_i (0 \leq i \leq D-1)$ is nonempty, and by construction the $\{C^{\pm}_i\}^{D-1}_{i=0}$ is an {\it equitable} partition of $X$ in the sense of \cite[p. 75]{CG}; see \cite[Proposition 5.6]{JHL}. Define $\W$ to be the subspace of $V$ spanned by $\{\hat{C}^{\pm}_i\}^{D-1}_{i=0}$. By the previous comments one readily sees that $\{\hat{C}^{\pm}_i\}^{D-1}_{i=0}$ is an orthogonal basis for $\W$.

\medskip
For $0 \leq i \leq D$ define the diagonal matrix $E^*_i=E^*_i(x) \in \MX$ 
by $(E^*_i)_{yy} = 1$ if $\partial(x,y)=i$ and $0$ otherwise. 
We call $E^*_i$ the {\it $i$-th dual primitive idempotent} of $\Ga$ with respect to $x$.
Observe that $I = \sum^{D}_{i=0}E^*_i$ and $E^*_iE^*_j=\delta_{i,j}E^*_i$ 
for $0 \leq i,j \leq D$. So the set $\{E^*_i\}^D_{i=0}$ forms a basis for 
a commutative subalgebra $M^*=M^*(x)$ of $\MX$. We call $M^*$ 
the {\it dual adjacency algebra} of $\Ga$ with respect to $x$. 
Define the diagonal matrix $A_i^*=A_i^*(x) \in \MX$ by
$(A^*_i)_{yy} = |X|(E_i)_{xy}$ for $y \in X$, called the 
{\it $i$-th dual distance matrix} of $\Ga$ with respect to $x$. 
By \cite[p. 379]{PT:T-alg} $\{A^*_i\}^D_{i=0}$ is a basis for $M^*$. 
We abbreviate $A^*=A^*_1$, called the {\it dual adjacency matrix} of 
$\Ga$ with respect to $x$. By \cite[Lemma 3.11]{PT:T-alg} $A^*$ 
generates $M^*$. By these comments $A^*$ has $D+1$ mutually 
distinct real eigenvalues, denoted by 
$\tht^*_0, \tht^*_1, \ldots, \tht^*_D$ and called $\tht^*_i$ the 
$i$-th {\it dual eigenvalue} of $A^*$.

\smallskip
{\it Terwilliger algebra} $T=T(x)$ with respect to $x$ is the subalgebra 
of $\MX$ generated by $A, A^*$ \cite{PT:T-alg}. By $T$-module, we 
mean a subspace $W \subseteq V$ such that $BW \subseteq W$ for 
all $B \in T$. We define 
$\wt A^* = \wt A^*(C)={|C|}^{-1}\sum_{y \in C}A^*(y) \in \MX$, called 
the {\it dual adjacency matrix} of $\Ga$ with respect to $C$. 
The Terwilliger algebra $\wt{T}=\wt{T}(C)$ with respect to $C$ is the 
subalgebra of $\MX$ generated by $A, \wt{A}^*$ \cite{HS}. 
In \cite[Definition 5.20]{JHL} we defined the generalized Terwilliger 
algebra $\T=\T(x,C)$. The algebra $\T$ is the subalgebra of $\MX$ 
generated by $T, \wt T$. Observe that $A, A^*$ and $\wt{A}^*$ generate 
$\T$. Note that $\W$ has a module structure for both $T$ and $\wt T$, 
and so is a $\T$-module \cite[Proposition 5.25]{JHL}. The $T$-submodule 
(resp. $\wt{T}$-submodule) of $\W$ generated by $\hat{x}$ (resp. $\hat{C}$) 
will be called the {\it primary $T$-module} (resp. primary $\wt{T}$-module), 
denoted by $M\hat{x}$ (resp. $M\hat{C}$). The $\{A_i\hat{x}\}^D_{i=0}$ 
(resp. $\{\hat{C}_i\}^{D-1}_{i=0}$) is a basis for $M\hat{x}$ (resp. $M\hat{C}$).  
In Section \ref{T-mod;W}, we will discuss the $T$-module $\W$ in more detail.

\section{Leonard systems and parameter arrays}

Let {\sf d} denote a positive integer. Let $M_{{\sf d}+1}(\C)$ denote the $\C$-algebra consisting of all $({\sf d}+1)\times ({\sf d}+1)$ matrices that have entries in $\C$. Let $\mcal{A}$ denote a $\C$-algebra isomorphic to $M_{{\sf d}+1}(\C)$. Let $\sf V$ denote an irreducible left $\mcal{A}$-module. Remark that $\sf V$ is unique up to isomorphism of $\mathcal{A}$-modules and $\sf V$ has dimension ${\sf d}+1$. For ${\sf A} \in \mcal{A}$, ${\sf A}$ is called {\it multiplicity-free} whenever ${\sf A}$ has ${\sf d}+1$ mutually distinct eigenvalues. Assume ${\sf A}$ is multiplicity-free. Let $\{ \tht_i\}^{\sf d}_{i=0}$ denote an ordering of distinct eigenvalues of $\sf A$. For $0 \leq i \leq {\sf d}$ let ${\sf V}_i$ denote the eigenspace of $\sf A$ associated with $\tht_i$. Define ${\sf E}_i \in \mcal{A}$ by $({\sf E}_i-{\rm I}){\sf V}_i=0$ and ${\sf E}_i{\sf V}_j=0$ for $j \ne i ~(0 \leq j\leq {\sf d})$, where ${\rm I}$ is the identity of $\mcal{A}$. We call ${\sf E}_i$ the {\it primitive idempotent} of ${\sf A}$ associated with $\tht_i$. Observe that (i) ${\sf A}{\sf E}_i=\tht_i{\sf E}_i$, (ii) ${\sf E}_i{\sf E}_j=\delta_{i,j}{\sf E}_i$, (iii) $\sum^{\sf d}_{i=0}{\sf E}_i={\rm I}$. We now define a Leonard system in $\mcal{A}$.

\begin{definition}\cite[Definition 1.4]{PT;2lin}\label{Def;LS}
By a {\it Leonard system} on $\sf V$, we mean a sequence
$$
\Phi = ({\sf A}; {\sf A}^*; \{{\sf E}_i\}^{\sf d}_{i=0}; \{{\sf E}^*_i\}^{\sf d}_{i=0})
$$
that satisfies (i)--(v) below.
\begin{itemize}
\item[(i)] Each of ${\sf A}, {\sf A}^*$ is a multiplicity-free element in $\mcal{A}$.
\item[(ii)] $\{{\sf E}_i\}^{\sf d}_{i=0}$ is an ordering of the primitive idempotents of ${\sf A}$.
\item[(iii)] $\{{\sf E}^*_i\}^{\sf d}_{i=0}$ is an ordering of the primitive idempotents of ${\sf A}^*$.
\item[(iv)] For $0 \leq i,j \leq {\sf d}$, 
$$
{\sf E}_i{\sf A}^*{\sf E}_j = 
\begin{cases}
0 & \text{ if } \quad |i-j|>1,\\
\ne 0 & \text{ if } \quad |i-j| = 1.
\end{cases}
$$
\item[(v)] For $0 \leq i,j \leq {\sf d}$, 
$$
{\sf E}^*_i{\sf A}{\sf E}^*_j = 
\begin{cases}
0 & \text{ if } \quad |i-j|>1,\\
\ne 0 & \text{ if } \quad |i-j| = 1.
\end{cases}
$$
\end{itemize}
We call {\sf d} the {\it diameter} of $\Phi$, and say $\Phi$ is {\it over} $\C$.
\end{definition}

\begin{example}\label{ex;LS;Mx} 
Recall from Section \ref{S3;DRG} that $\Ga$ is a $Q$-polynomial
distance-regular graph and $T$ is the Terwilliger algebra of $\Ga$
with respect to $x$.
Referring to Section \ref{S3;DRG}, consider a sequence of elements of $T$
\begin{equation}\label{ex;LS;Mx;eq}
(A; A^*; \{E_i\}^D_{i=0}; \{E^*_i\}^D_{i=0}),
\end{equation}
where $A$ (resp. $A^*$) is the adjacency matrix (resp. dual adjacency matrix) of $\Ga$ and $E_i$ (resp. $E^*_i$) is the $i$-th primitive idempotent (resp. dual primitive idempotent) of $\Ga$. Then the sequence (\ref{ex;LS;Mx;eq}) is a Leonard system on $\Mx$.
\end{example}

Let $\Phi$ be a Leonard system in Definition \ref{Def;LS}. 
Each of the following is a Leonard system on $\sf V$:
\begin{gather*}
\Phi^* := ({\sf A}^*; {\sf A}; \{{\sf E}^*_i\}^{\sf d}_{i=0}; \{{\sf E}_i\}^{\sf d}_{i=0}), \\
\Phi^{\downarrow} := ({\sf A}; {\sf A}^*; \{{\sf E}_i\}^{\sf d}_{i=0}; \{{\sf E}^*_{{\sf d}-i}\}^{\sf d}_{i=0}),
\qquad 
\Phi^{\Downarrow} := ({\sf A}; {\sf A}^*; \{{\sf E}_{{\sf d}-i}\}^{\sf d}_{i=0}; \{{\sf E}^*_{i}\}^{\sf d}_{i=0}).
\end{gather*}
For $0 \leq i \leq {\sf d}$, let $\tht^*_i$ denote the eigenvalue of ${\sf A}^*$ associated with ${\sf E}^*_i$. By \cite[Theorem 3.2]{PT;2lin} there exist nonzero scalars $\{\varphi_i\}^{\sf d}_{i=0}$ and a $\C$-algebra homomorphism $\natural: \mcal{A} \to M_{{\sf d}+1}(\C)$ such that
\begin{equation*}
{\sf A}^{\natural} = 
\scalemath{0.8}{
\begin{bmatrix}
\tht_0 &&&&& {\bf 0} \\
1 & \tht_1 &\\
& 1 & \tht_2\\
&& \cdot & \cdot \\
&&& \cdot & \cdot \\
{\bf 0} &&&&1&\tht_{\sf d}
\end{bmatrix}},
\qquad \qquad
{\sf A}^{*\natural}=
\scalemath{0.8}{
\begin{bmatrix}
\tht^*_0 & \varphi_1 &&&&{\bf 0} \\
& \tht^*_1 & \varphi_2 &\\
&& \tht^*_2 & \cdot \\
&&&\cdot & \cdot & \\
&&&& \cdot & \varphi_{\sf d} \\
{\bf 0} &&&&& \tht^*_{\sf d}
\end{bmatrix}}.
\end{equation*}
We call the sequence $\{\varphi_i\}^{\sf d}_{i=1}$ the {\it first split sequence} of $\Phi$. 
We let $\{\phi_i\}^{\sf d}_{i=0}$ denote the first split sequence of 
$\Phi^{\Downarrow}$ and call this the {\it second split sequence} of $\Phi$.
By the {\it parameter array} of $\Phi$ we mean the sequence
\begin{equation*}
p(\Phi):=(\{\tht_i\}^{\sf d}_{i=0}, \{\tht^*_i\}^{\sf d}_{i=0}, \{\varphi_i\}^{\sf d}_{i=1}, \{\phi_i\}^{\sf d}_{i=1}).
\end{equation*}
Let $\Psi$ denote a Leonard system in a $\C$-algebra $\mcal{B}$. We say that $\Psi$ is {\it isomorphic} to $\Phi$ whenever there is a $\C$-algebra isomorphism $\alpha :\mcal{A} \to \mcal{B}$ such that $\Psi=\Phi^{\alpha} := ({\sf A}^{\alpha}; {\sf A}^{*\alpha}; \{{\sf E}^{\alpha}_i\}^{\sf d}_{i=0}; \{{\sf E}^{*\alpha}_i\}^{\sf d}_{i=0})$. In \cite[Theorem 1.9]{PT;2lin} Terwilliger classified Leonard systems by using parameter arrays, and characterized the set of parameter arrays of Leonard systems with diameter ${\sf d}$. Moreover, he displayed all the parameter arrays over $\C$ in \cite{PT:PA}. We recall the $q$-Racah family of parameter arrays, that is the most general family.

\begin{example}\cite[Example 5.3]{PT:PA}\label{ex;q-racah;PA} ($q$-Racah type)
For $0 \leq i \leq {\sf d}$ define
\begin{align}
\label{theta_i}
\theta_i 	& =  \theta_0+h(1-q^{i})(1-sq^{i+1})q^{-i}, \\
\label{theta^*_i}
\theta^*_i 	& =  \theta^*_0+h^*(1-q^{i})(1-s^*q^{i+1})q^{-i},
\end{align}
and for $1\leq i \leq {\sf d}$ define
\begin{align}
\label{varphi_i}
\varphi_i	& =  hh^{*}q^{1-2i}(1-q^{i})(1-q^{i-{\sf d}-1})(1-r_1q^i)(1-r_2q^i), \\
\label{phi_i}
\phi_i 		& =  hh^{*}q^{1-2i}(1-q^{i})(1-q^{i-{\sf d}-1})(r_1-s^*q^i)(r_2-s^*q^i)/s^*,
\end{align} 
where $\tht_0$ and $\tht_0^*$ are scalars in $\C$, and
where $h, h^*, s, s^*, r_1, r_2$ are nonzero scalars in $\C$ such that
$r_1r_2=ss^*q^{{\sf d}+1}$.
To avoid degenerate situations assume that 
\begin{itemize}
\item[(i)] none of $q^i, r_1q^i, r_2q^i, s^*q^i/r_1, s^*q^i/r_2$ is equal to 1 for $1\leq i \leq {\sf d}$,
\item[(ii)] neither of $sq^i, s^*q^i$ is equal to 1 for $2 \leq i \leq 2{\sf d}$.
\end{itemize}
Then the sequence
$(\{\tht_i\}^{\sf d}_{i=0}, \{\tht^*_i\}^{\sf d}_{i=0},\{\varphi_i\}^{\sf d}_{i=1}, \{\phi_i\}^{\sf d}_{i=1})$
is a parameter array over $\C$.
This parameter array is said to have {\it $q$-Racah type}. 
\end{example}
\noindent
We say that {\it $\Phi$ has $q$-Racah type} whenever its parameter array has $q$-Racah type.

\medskip
Let $u$ be a nonzero vector in ${\sf E}_0{\sf V}$. By \cite[Lemma 10.2]{PT:Madrid}, the sequence $\{{\sf E}^*_iu\}^{\sf d}_{i=0}$ is a basis for ${\sf V}$, called a {\it $\Phi$-standard basis} for {\sf V}. The following is a characterization of the $\Phi$-standard basis.
\begin{lemma}{\rm\cite[Lemma 10.4]{PT:Madrid}}\label{Phi-sb}
Let $\{v_i\}^{\sf d}_{i=0}$ denote a sequence of vectors in ${\sf V}$, not all $0$.
Then this sequence is a $\Phi$-standard basis for $\sf V$ if and only if both
{\rm(i)} $v_i \in {\sf E}^*_i {\sf V}$ for $0 \leq i \leq {\sf d};$ {\rm(ii)} $\sum^{\sf d}_{i=0} v_i \in {\sf E}_0 {\sf V}$.
\end{lemma}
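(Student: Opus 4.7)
The plan is to prove both directions of the biconditional by direct computation, invoking only the defining idempotent relations ${\sf E}^*_i {\sf E}^*_j = \delta_{i,j}{\sf E}^*_i$ and $\sum_{i=0}^{\sf d} {\sf E}^*_i = {\rm I}$, together with the definition of a $\Phi$-standard basis as a sequence $\{{\sf E}^*_i u\}_{i=0}^{\sf d}$ arising from a nonzero $u \in {\sf E}_0 {\sf V}$.

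For the forward direction, assume $v_i = {\sf E}^*_i u$ for some nonzero $u \in {\sf E}_0{\sf V}$. Condition (i) is immediate since ${\sf E}^*_i u \in {\sf E}^*_i {\sf V}$. For condition (ii), summing over $i$ and applying the resolution of the identity yields
\[
\sum_{i=0}^{\sf d} v_i \;=\; \sum_{i=0}^{\sf d} {\sf E}^*_i u \;=\; u \;\in\; {\sf E}_0{\sf V}.
\]

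For the converse, assume (i) and (ii), and define $u := \sum_{i=0}^{\sf d} v_i$. Condition (ii) gives $u \in {\sf E}_0 {\sf V}$ at once. Applying ${\sf E}^*_j$ to the defining sum and using (i), which implies ${\sf E}^*_i v_i = v_i$, together with the orthogonality ${\sf E}^*_j {\sf E}^*_i = \delta_{i,j}{\sf E}^*_i$, one obtains ${\sf E}^*_j u = v_j$ for each $j$. Because the hypothesis asserts that some $v_j$ is nonzero, this in turn forces $u \ne 0$, since $u=0$ would make every $v_j = {\sf E}^*_j u$ vanish. Thus $\{v_i\}_{i=0}^{\sf d} = \{{\sf E}^*_i u\}_{i=0}^{\sf d}$ with a nonzero $u \in {\sf E}_0 {\sf V}$, which is precisely the definition of a $\Phi$-standard basis.

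The argument is essentially bookkeeping. The only mild subtlety is verifying $u \ne 0$ in the converse, and this reduces immediately to the direct-sum decomposition ${\sf V} = \bigoplus_{i=0}^{\sf d} {\sf E}^*_i {\sf V}$ implicit in the idempotent relations. Nothing about the tridiagonal shape of $\Phi$, its first or second split sequences, or the $q$-Racah data of Example \ref{ex;q-racah;PA} enters; the assertion really concerns only the single idempotent ${\sf E}_0$ together with the orthogonal family $\{{\sf E}^*_i\}_{i=0}^{\sf d}$. Accordingly I anticipate no genuine obstacle beyond invoking conditions (i) and (ii) cleanly at the right moments.
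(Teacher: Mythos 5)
Your proof is correct and is essentially the standard argument: the paper itself does not prove this lemma but cites it from Terwilliger's lecture notes, where the proof is the same bookkeeping with the idempotent relations, the resolution of the identity $\sum_i {\sf E}^*_i = {\rm I}$, and the observation that $u=\sum_i v_i$ recovers the generating vector. The one point worth making explicit is that the conclusion ``$\{{\sf E}^*_i u\}_{i=0}^{\sf d}$ is a basis'' rests on the cited Lemma 10.2 of the reference (built into the definition of $\Phi$-standard basis), not on your computation alone, but as stated your argument correctly reduces to that definition.
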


Consider the Leonard system $\Phi$ from Definition \ref{Def;LS} and its corresponding parameter array $p(\Phi)$. The matrix representing ${\sf A}^*$ relative to a $\Phi$-standard basis is
\begin{equation*}
\diag(\tht^*_0, \tht^*_1, \tht^*_2, \ldots, \tht^*_{\sf d}).
\end{equation*}
Moreover, the matrix representing ${\sf A}$ relative to a $\Phi$-standard basis is the tridiagonal matrix
\begin{equation}\label{[A^(flat)]}
\scalemath{0.9}{
\begin{bmatrix}
a_0 & b_0 &&& {\bf 0}\\
c_1 & a_1 & b_1 && \\
& c_2 & a_2 & \ddots & \\
&& \ddots &\ddots & b_{{\sf d}-1} \\
{\bf 0}&&&c_{\sf d}&a_{\sf d}
\end{bmatrix}},
\end{equation}
where $\{a_i\}^{\sf d}_{i=0},\{b_i\}^{{\sf d}-1}_{i=0}, \{c_i\}^{\sf d}_{i=1}$ are some scalars in $\mathbb{C}$. We call $a_i, b_i, c_i$ the {\it intersection numbers of $\Phi$}. Note that the matrix (\ref{[A^(flat)]}) has constant row sum $\tht_0$ \cite[Lemma 10.5]{PT:Madrid}.

\begin{example}\label{Mx;S-basis;I-num}
Let $\Phi$ be the Leonard system in Example \ref{ex;LS;Mx}. Then $\{A_i\hat{x}\}^D_{i=0}$ form a $\Phi$-standard basis for $\Mx$. Recall the scalars $a_i,b_i,c_i$ from (\ref{intersection;numbers}). These are the intersection numbers of $\Phi$\cite[Theorem 4.1(vi)]{PT;T-alg;II}. 
\end{example}

\begin{note}\label{Note;S4}
Recall from Section \ref{S3;DRG} that $\Ga$ is a $Q$-polynomial distance-regular graph.
Let $\Phi=\Phi(\Ga)$ denote the Leonard system (\ref{ex;LS;Mx;eq}) associated with $\Ga$. We say that {\it $\Ga$ has $q$-Racah type} when $\Phi$ has $q$-Racah type. For the rest of the paper, assume that $\Ga$ has $q$-Racah type. Because $p(\Phi)$ has $q$-Racah type, it satisfies (\ref{theta_i})--(\ref{phi_i}) for some scalars $h,h^*,s,s^*,r_1,r_2$. We fix this notation for the rest of the paper. Referring to this notation, whenever we encounter square roots, these are interpreted as follows. We fix square roots $s^{1/2}, s^{*1/2}, r_1^{1/2},r_2^{1/2}$ such that $r_1^{1/2}r_2^{1/2}=s^{1/2}s^{*1/2}q^{(D+1)/2}$.
\end{note}

Let $\Phi$ denote a Leonard system in Definition {\rm \ref{Def;LS}}. Let $p(\Phi)=p(\Phi;q)$ denote the parameter array of $\Phi$ that has $q$-Racah type in Example \ref{ex;q-racah;PA}. In the following proposition we describe the parameter array that has $q^{-1}$-Racah type. 
\begin{proposition}\label{q-inv;Racah}
{\rm ($q^{-1}$-Racah type)}
For $0 \leq i \leq {\sf d}$ define
\begin{align}
\label{tht;q-inv}
& \theta'_i  =  \theta'_0+h'(1-q^{-i})(1-s'q^{-i-1})q^{i}, \\
\label{tht*;q-inv}
& {\theta^*_i}'  = {\theta^*_0}'+{h^*}'(1-q^{-i})(1-{s^*}'q^{-i-1})q^{i},
\end{align}
and for $1\leq i \leq {\sf d}$ define
\begin{align}
\label{varphi;q-inv}
& {\varphi_i}'	 =  h'{h^{*}}'q^{-1+2i}(1-q^{-i})(1-q^{-i+{\sf d}+1})(1-r'_1q^{-i})(1-r'_2q^{-i}), \\
\label{phi;q-inv}
& {\phi_i}'  =  h'{h^{*}}'q^{-1+2i}(1-q^{-i})(1-q^{-i+{\sf d}+1})(r'_1-{s^*}'q^{-i})(r'_2-{s^*}'q^{-i})/{s^*}',
\end{align} 
where 
\begin{align}
\label{rel;q-inv(0)}
& \tht'_0=\tht_0, && {\tht^*_0}'=\tht^*_0, &&\\
\label{rel;q-inv(1)}
& h' = hsq, && s'=s^{-1}, && r'_1 = r_1^{-1}, && \\
\label{rel;q-inv(2)}
& {h^*}' = h^*s^*q, && {s^*}'=s^{*-1}, && r'_2 = r_2^{-1}. && 
\end{align}
Then the sequence $p(\Phi;q^{-1}):=(\{\tht'_i\}^{\sf d}_{i=0}, \{{\tht^*_i}'\}^{\sf d}_{i=0}, \{\varphi'_i\}^{\sf d}_{i=1}, \{\phi'_i\}^{\sf d}_{i=1})$ is equal to $p(\Phi;q)$. Therefore $p(\Phi;q^{-1})$ is the parameter array that has $q^{-1}$-Racah type. 
\end{proposition}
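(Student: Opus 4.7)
The plan is a direct verification: substitute the relations (\ref{rel;q-inv(0)})--(\ref{rel;q-inv(2)}) into each of (\ref{tht;q-inv})--(\ref{phi;q-inv}) and show term by term that the result equals (\ref{theta_i})--(\ref{phi_i}). The only algebraic identity I will need repeatedly is
\[
1 - \alpha q^{-i} \;=\; -\alpha q^{-i}(1 - \alpha^{-1} q^{i}) \qquad (\alpha \in \C^{*}),
\]
which converts each factor from $q^{-i}$-form to $q^{i}$-form at the cost of a sign and a monomial prefactor. The one other ingredient is the constraint $r_1 r_2 = s s^{*} q^{{\sf d}+1}$ from Example \ref{ex;q-racah;PA}, which will be invoked only at the end of the $\varphi'_i$ and $\phi'_i$ computations.

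For $\tht_i'$, after substituting $h' = hsq$ and $s' = s^{-1}$, applying the identity to each of $(1-q^{-i})$ and $(1-s^{-1}q^{-i-1})$ produces two minus signs that cancel, while the monomial prefactors $-q^{-i}$ and $-s^{-1}q^{-i-1}$ combine with $hsq \cdot q^{i}$ to leave the single factor $h q^{-i}$; the result is exactly (\ref{theta_i}). The verification of ${\tht_i^*}' = \tht_i^*$ is identical in structure, with starred parameters. For $\varphi_i'$, the identity is applied to each of the four linear factors in (\ref{varphi;q-inv}); the four signs contribute $(-1)^4 = 1$, the monomial prefactors multiply to $(r_1 r_2)^{-1} q^{{\sf d}+1-4i}$, and combining with the outer coefficient $h' h^{*\prime} q^{-1+2i} = h h^{*} s s^{*} q^{1+2i}$ reduces everything, after applying $r_1 r_2 = s s^{*} q^{{\sf d}+1}$, to the outer coefficient $h h^{*} q^{1-2i}$ of (\ref{varphi_i}). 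For $\phi_i'$, the same method applies with the extra preliminary rewriting
\[
r_j^{-1} - s^{*-1} q^{-i} \;=\; -\,r_j^{-1} s^{*-1} q^{-i}(r_j - s^{*} q^{i});
\]
the bookkeeping of $s^{*}$-powers (including the outer division by $s^{*\prime} = s^{*-1}$) collapses to a net $1/s^{*}$, reproducing (\ref{phi_i}).

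Finally, the $q$-Racah constraint for the primed parameters, $r'_1 r'_2 = s' s^{*\prime} (q^{-1})^{{\sf d}+1}$, follows by inverting both sides of $r_1 r_2 = s s^{*} q^{{\sf d}+1}$, so $p(\Phi;q^{-1})$ is a genuine parameter array of $q^{-1}$-Racah type in the sense of Example \ref{ex;q-racah;PA} applied with $q$ replaced by $q^{-1}$. The main obstacle here is purely clerical: tracking the signs and the $q$-exponents through roughly a dozen factor conversions, with the risk of a single misplaced minus sign or off-by-one in an exponent invalidating the identity. I will manage this by maintaining separate ledgers for signs, $q$-exponents, and multiplicative constants, recombining only at the end so that any slip remains localized to a small, easily audited step.
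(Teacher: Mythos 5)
Your proposal is correct and follows exactly the route the paper takes: the paper's proof is the one-line statement that ``using (\ref{rel;q-inv(0)})--(\ref{rel;q-inv(2)}) one checks'' the four identities, and your factor-by-factor conversion (with the sign/exponent ledger and the final invocation of $r_1r_2=ss^*q^{{\sf d}+1}$) is precisely that check carried out in full; the arithmetic in each case is right. Your added verification that $r_1'r_2'=s's^{*\prime}(q^{-1})^{{\sf d}+1}$ is a small but worthwhile supplement the paper leaves implicit.
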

\begin{proof}
Using (\ref{rel;q-inv(0)})--(\ref{rel;q-inv(2)}) one checks that $\tht'_i=\tht_i, {\tht^*_i}'=\tht^*_i$ for $0 \leq i \leq {\sf d}$ and $\varphi'_i=\varphi_i, \phi'_i=\phi_i$ for $1\leq i \leq {\sf d}$. It follows that $p(\Phi;q^{-1})$ is the parameter array of $\Phi$. By definition of $q$-Racah type in Example \ref{ex;q-racah;PA},  $p(\Phi;q^{-1})$ has $q^{-1}$-Racah type.
\end{proof}

\section{$T$-module $\W$}\label{T-mod;W}

We recall the $T$-module $\W$ from the last paragraph 
in Section \ref{S3;DRG}. Note that 
$\W$ is decomposed into the direct sum of two irreducible 
$T$-modules $\Mx$ and $\Mxp$ \cite[Section~5]{JHL}. 
We first discuss $\Mx$ and 
its associated polynomials. Recall from Example \ref{ex;LS;Mx} 
that $\Phi := (A, A^*, \{E_i\}^D_{i=0}, \{E^*_i\}^D_{i=0})$ 
is a Leonard system on $\Mx$. Also recall from Example 
\ref{Mx;S-basis;I-num} that $\{A_i\hat{x}\}^D_{i=0}$ is the 
$\Phi$-standard basis for $\Mx$ and the intersection numbers 
$a_i,b_i,c_i$ of $\Phi$. Abbreviate $v_i = A_i\hat{x}$ for 
$0 \leq i \leq D$. Observe that $v_0 = \hat{C}^-_0=\hat{x}, 
v_i = \hat{C}^+_{i-1}+\hat{C}^-_i (1 \leq i \leq D-1),$ and 
$v_D = \hat{C}^+_{D-1}$.
We now define a sequence of polynomials 
$f_0, f_1, \ldots , f_D$ by $f_0 :=1$ and 
$$
xf_i = b_{i-1}f_{i-1}+a_if_i+c_{i+1}f_{i+1} \qquad (0 \leq i \leq D-1),
$$
where $f_{-1}=0$. Then by \cite[Theorem 13.4]{PT:Madrid} 
we have
\begin{equation}\label{shift;fi}
f_i(A)v_0 = v_i \qquad \qquad  (0 \leq i \leq D).
\end{equation}
For $0 \leq i \leq D$, define the scalars $k_i$ by
\begin{equation}\label{scalar;ki}
k_i = b_0b_1\cdots b_{i-1}/c_1c_2 \cdots c_i.
\end{equation}
With the scalars $k_i$ and the polynomials $f_i$ 
we define the polynomial $F_i $ by
\begin{equation}\label{def;F}
F_i = f_i/k_i \qquad \qquad (0 \leq i \leq D).
\end{equation}
One routinely checks that 
$$
xF_i = b_iF_{i+1} + a_iF_i + c_iF_{i-1} \qquad \qquad (0 \leq i \leq D-1),
$$
where $F_{-1}=0$. By \cite[Theorem 23.2]{PT:Madrid}, it follows that for $0 \leq i \leq D$
\begin{equation}\label{poly_Fi}
F_i(x) = \sum^i_{j=0} \frac{(\tht^*_i-\tht^*_0)(\tht^*_i-\tht^*_1)\cdots(\tht^*_i-\tht^*_{j-1})}{\varphi_1\varphi_2\cdots\varphi_j}(x-\tht_0)(x-\tht_1)\cdots(x-\tht_{j-1}).
\end{equation}
\begin{definition}\label{abcd}
With reference to the parameters $s,s^*,r_1,r_2,D$ associated with $p(\Phi)=p(\Phi;q)$, define the scalars $a,b,c,d$ by
\begin{equation}\label{rels;abcd}
a= \left(\frac{r_1r_2}{s^*q^D}\right)^{1/2}, \quad
b=\left( \frac{s^*}{r_1r_2q^D} \right)^{1/2}, \quad
c=\left( \frac{r_2s^*q^{D+2}}{r_1} \right)^{1/2}, \quad
d=\left( \frac{r_1s^*q^{D+2}}{r_2} \right)^{1/2}.
\end{equation}
We say that the scalars {\it $a,b,c,d$ are associated with $p(\Phi)$.}
\end{definition}
\noindent
Referring to Definition \ref{abcd}, we have the following equations which are useful for our calculation.
\begin{equation}\label{abcd;simple;formula}
\begin{array}{lllllllll}
&ab = q^{-D}, & \qquad & ac = r_2q, &\qquad &  ad = r_1q, &\\
&bc =s^*q/r_1, & \qquad &  bd = s^*q/r_2, & \qquad & cd = s^*q^{D+2}, &\qquad & abcd =s^*q^2.
\end{array}
\end{equation}

\begin{lemma}\label{conditions;abcd}
Let the scalars $a,b,c,d$ be as in Definition {\rm \ref{abcd}}. Then the following hold:
\begin{itemize}
\item[\rm (i)] none of $abq^i,acq^i,adq^i,bcq^i,bdq^i$ is equal to $1$
for $0 \leq i \leq D-1$,
\item[\rm(ii)] $cdq^i\ne1$ for $-D \leq i \leq D-2$, 
\item[\rm(iii)] $abcdq^i\ne 1$  for $0 \leq i \leq 2D-2$.
\end{itemize}
\end{lemma}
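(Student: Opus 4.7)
The plan is to verify each condition by substituting the product formulas from \eqref{abcd;simple;formula} and then invoking either the hypothesis that $q$ is not a root of unity or the non-degeneracy conditions (i), (ii) from the $q$-Racah parameter array in Example \ref{ex;q-racah;PA}, which the parameter array $p(\Phi)$ satisfies by assumption.

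For part (i), I would handle the five products one at a time. Since $ab=q^{-D}$, the expression $abq^i=q^{i-D}$ for $0\le i\le D-1$ gives exponents in $\{-D,-D+1,\ldots,-1\}$, all nonzero, so $abq^i\ne 1$ because $q$ is not a root of unity. Next, $acq^i=r_2q^{i+1}$ with $i+1\in\{1,\ldots,D\}$, which by clause (i) of Example \ref{ex;q-racah;PA} is never $1$; symmetrically for $adq^i=r_1q^{i+1}$. Finally, $bcq^i=s^*q^{i+1}/r_1$ and $bdq^i=s^*q^{i+1}/r_2$ with $i+1\in\{1,\ldots,D\}$, and these are $\ne 1$ by the same clause applied to $s^*q^j/r_1$ and $s^*q^j/r_2$.

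For parts (ii) and (iii), I would use $cd=s^*q^{D+2}$ and $abcd=s^*q^2$. Then $cdq^i=s^*q^{D+2+i}$ for $-D\le i\le D-2$ gives $s^*q^j$ with $j\in\{2,3,\ldots,2D\}$, and $abcdq^i=s^*q^{i+2}$ for $0\le i\le 2D-2$ again gives $s^*q^j$ with $j\in\{2,\ldots,2D\}$. Both fall under clause (ii) of Example \ref{ex;q-racah;PA}, which says $s^*q^j\ne 1$ for $2\le j\le 2D$, so both are $\ne 1$.

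There is essentially no obstacle here: the result is a direct bookkeeping translation between the simple product identities \eqref{abcd;simple;formula} and the non-degeneracy hypotheses on a $q$-Racah parameter array. The only care needed is to match the index ranges precisely (noting that $i+1$ runs over $\{1,\ldots,D\}$ in the five cases of part (i), and that $D+2+i$ and $2+i$ both land in $\{2,\ldots,2D\}$ under the stated ranges in parts (ii) and (iii)), and to use the fact that $q$ is not a root of unity to rule out $abq^i=q^{i-D}=1$.
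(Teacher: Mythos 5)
Your proof is correct and follows exactly the route the paper intends: the paper's proof is the one-line instruction ``Use assumption (i), (ii) in Example \ref{ex;q-racah;PA} and (\ref{abcd;simple;formula})'', and your argument is precisely the careful expansion of that, with all index ranges checked correctly.
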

\begin{proof}
Use assumption (i), (ii) in Example \ref{ex;q-racah;PA} and (\ref{abcd;simple;formula}).
\end{proof}
Consider a finite sequence of the polynomials $\{p_i(y+y^{-1})\}^D_{i=0}$ which are defined by the scalars $a,b,c,d$ associated with $p(\Phi)$:
\begin{equation}\label{AWpoly;ab;finite}
p_i(y+y^{-1}) = 
p_i(y+y^{-1};a,b,c,d \mid q) 
 = {_4}\phi_3
\left(
\begin{matrix}
q^{-i}, ~ abcdq^{i-1}, ~ ay, ~  ay^{-1} \\
ab, ~ ac, ~ ad
\end{matrix}
~\middle| ~ q,~q
\right),
\end{equation}
where $y$ is indeterminate; cf. (\ref{AW;abcd}).
Applying (\ref{rels;abcd}) and the equation $r_1r_2=ss^*q^{D+1}$ to (\ref{AWpoly;ab;finite}) gives
\begin{equation}
\label{AWpoly;s,s*,r1,r2}
p_i(y+y^{-1}) = {_4}\phi_3
\left(
\begin{matrix}
q^{-i}, ~ s^*q^{i+1}, ~ (sq)^{1/2}y, ~  (sq)^{1/2}y^{-1} \\
q^{-D}, ~ r_1q, ~ r_2q
\end{matrix}
~\middle| ~ q,~q
\right). 
\end{equation}
\begin{lemma}\label{rel;F,p} 
Recall the polynomial sequences $\{F_i\}^D_{i=0}$ from {\rm(\ref{poly_Fi})} 
and $\{p_i(y+y^{-1})\}^D_{i=0}$ from {\rm(\ref{AWpoly;s,s*,r1,r2})}. 
Let $x$ be of the form 
\begin{equation}\label{x_form}
h(sq)^{1/2}(y+y^{-1}) + (\tht_0 - h - hsq),
\end{equation}
where $h,s,\tht_0$ are associated with $p(\Phi)$. Then 
\begin{equation}\label{F=p}
F_i(x) = p_i(y+y^{-1}), \qquad i=0,1,2,\ldots D.
\end{equation}
\end{lemma}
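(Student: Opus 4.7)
The plan is to expand both $F_i(x)$ and $p_i(y+y^{-1})$ as explicit finite sums indexed by $j$ and verify that corresponding terms agree. Writing out the ${}_4\phi_3$ in (\ref{AWpoly;s,s*,r1,r2}) gives
\[
p_i(y+y^{-1}) = \sum_{j=0}^{i} \frac{(q^{-i};q)_j\,(s^*q^{i+1};q)_j\,((sq)^{1/2}y;q)_j\,((sq)^{1/2}y^{-1};q)_j}{(q^{-D};q)_j\,(r_1q;q)_j\,(r_2q;q)_j\,(q;q)_j}\, q^j,
\]
truncated by the vanishing of $(q^{-i};q)_j$ for $j>i$. The task then reduces to matching the ingredients of (\ref{poly_Fi}) with the four $q$-Pochhammer symbols and the weight $q^j$ above.

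I would handle the spectral part first. From (\ref{theta^*_i}) a direct simplification yields the factorization $\tht^*_i - \tht^*_k = h^*q^{-i}(1-q^{i-k})(1-s^*q^{i+k+1})$, and taking the product over $k=0,\dots,j-1$ collapses into $(q^{-i};q)_j$ and $(s^*q^{i+1};q)_j$ together with overall factors $(h^*)^j$, $(-1)^j$, and a tracked power of $q$. In parallel, from (\ref{varphi_i}) with ${\sf d}=D$, the product $\varphi_1\varphi_2\cdots\varphi_j$ expands into the four factors $(q;q)_j$, $(q^{-D};q)_j$, $(r_1q;q)_j$, $(r_2q;q)_j$, accompanied by an overall $(hh^*)^j$ and the $q$-power $\prod_{j'=1}^{j} q^{1-2j'}=q^{-j^2}$.

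The polynomial part requires factoring $x - \tht_k$. Substituting the prescribed form of $x$ and using $\tht_k - \tht_0 = h(1-q^k)(1-sq^{k+1})q^{-k}$ from (\ref{theta_i}), most terms cancel and one is left with
\[
x - \tht_k = h\bigl[(sq)^{1/2}(y+y^{-1}) - q^{-k} - sq^{k+1}\bigr].
\]
Writing $u=(sq)^{1/2}y$, this factors as $h(u-q^{-k})(1-sq^{k+1}u^{-1})$, and the product over $k=0,\dots,j-1$ produces exactly $((sq)^{1/2}y;q)_j$ and $((sq)^{1/2}y^{-1};q)_j$ up to an overall $h^j$, sign $(-1)^j$, and power of $q$.

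Assembling the three pieces into (\ref{poly_Fi}), the factors $h^j$ cancel, $(h^*)^j$ cancels as well, the two signs $(-1)^{2j}$ disappear, and the residual $q$-powers combine to a single weight $q^j$. This matches the ${}_4\phi_3$ expansion term by term and yields (\ref{F=p}). The main obstacle is purely bookkeeping; the one step where care is needed is the rewrite $(1-q^{i-k}) = -q^{i-k}(1-q^{k-i})$, used to reassemble $\prod_{k=0}^{j-1}(1-q^{i-k})$ as a constant multiple of $(q^{-i};q)_j$, which is what ultimately forces the sum to truncate at $j=i$.
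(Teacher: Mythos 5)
Your proposal is correct and follows the same route as the paper's proof: substitute the prescribed form of $x$ into the explicit sum (\ref{poly_Fi}), factor each ingredient into $q$-Pochhammer symbols, and recognize the result as the ${}_4\phi_3$ of (\ref{AWpoly;s,s*,r1,r2}). The paper states this in one line and leaves the bookkeeping to the reader; your factorizations of $\tht^*_i-\tht^*_k$, of $\varphi_1\cdots\varphi_j$, and of $x-\tht_k$, together with the cancellation of $h^j$, $(h^*)^j$, the signs, and the residual power $q^{-j(j-1)}\,/\,q^{-j^2}=q^{j}$, are exactly the omitted details and they check out.
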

\begin{proof}
First we compute the left-hand side in (\ref{F=p}).
Put (\ref{x_form}) for $x$ in $(\ref{poly_Fi})$ and evaluate the result to obtain
$$
\sum^i_{j=0} \frac{(q^{-i};q)_j(s^*q^{i+1};q)_j(s^{1/2}q^{1/2}y;q)_j(s^{1/2}q^{1/2}y^{-1};q)_j}{(r_1q;q)_j(r_2q;q)_j(q^{-D};q)_j(q;q)_n} q^j.
$$
This is equal to the right-hand side of (\ref{AWpoly;s,s*,r1,r2}) by the definition of basic hypergeometric series. Therefore the result follows.
\end{proof}
\begin{remark}\label{q-Racah;poly} With the above discussion, 
pick an integer $j~(0 \leq i \leq D)$. Evaluating (\ref{AWpoly;s,s*,r1,r2})
at $y=s^{1/2}q^{1/2+j}$ (or $y=s^{-1/2}q^{-1/2-j}$) we get
$$
{_4}\phi_3
\left(
\begin{matrix}
q^{-i}, ~ s^*q^{i+1}, ~ q^{-j}, ~  sq^{j+1} \\
q^{-D}, ~ r_1q, ~ r_2q
\end{matrix}
~\middle| ~ q,~q
\right).
$$
By this and definition of the $q$-Racah polynomials \cite{AW2},
$\{p_i(y+y^{-1})\}^{D-1}_{i=0}$ are the $q$-Racah polynomials.
\end{remark}

We now consider $\Mxp$, the orthogonal complement of $\Mx$ in $\W$. From \cite[Section~6]{JHL} the sequence $\Phi^\p_i := (A,A^*,\{E^\p_i\}^{D-2}_{i=0}, \{E^{*\p}_i\}^{D-2}_{i=0})$ acts as a Leonard system on $\Mxp$, where $E^\p_i=E_{i+1}$ and $E^{*\p}_i=E^*_{i+1}$ for $0 \leq i \leq D-2$.
Define the vectors $\{v^\p_i\}^{D-2}_{i=0}$ by
\begin{equation}\label{basis;v_perp}
v^\p_i= \xi_{i+1}\hat{C}^+_i + \xi_{i+1}\epsilon_{i+1}\hat{C}^-_{i+1},
\end{equation}
where for $1\leq i \leq D-1$
\begin{equation}\label{xi;epsilon}
\xi_i = q^{1-i}(1-q^{i-D})(1-s^*q^{i+1}), \qquad \epsilon_i=\frac{(1-q^i)(1-s^*q^{D+i+1})}{q^D(1-q^{i-D})(1-s^*q^{i+1})}.
\end{equation}
Then the sequence $\{v^\p_i\}^{D-2}_{i=0}$ is a $\Phi^\p$-standard basis for $\Mxp$ \cite[Lemma 6.6]{JHL}.
Let $a^\p_i, b^\p_i, c^\p_i$ denote the intersection numbers of $\Phi^\p$ \cite[(83)]{JHL}. 
We define a sequence of polynomials 
$f^\p_0, f^\p_1, \ldots, f^\p_{D-2}$ by $f^\p_0 :=1$ and
$$
xf^\p_i = b^\p_{i-1}f^\p_{i-1}+a^\p_if^\p_i+c^\p_{i+1}f^\p_{i+1} \qquad (0 \leq i \leq D-3),
$$
where $f_{-1}=0$. By \cite[Theorem 13.4]{PT:Madrid} we have
\begin{equation}\label{shift;fi;perp}
f^\p_i(A)v^\p_0=v^\p_i \qquad \qquad (0 \leq i \leq D-2).
\end{equation}
For $0\leq i \leq D-2$ define the scalars $k^\p_i$ by 
\begin{equation}\label{scalar;kiperp}
k^\p_i = b^\p_0b^\p_1 \cdots b^\p_{i-1}/c^\p_1c^\p_2\cdots c^\p_i.
\end{equation}
With the scalars $k^\p_i$ and the polynomials $f^\p_i$ 
we define the polynomial $F^\p_i$ by
\begin{equation}\label{def;F;perp}
F^\p_i=f^\p_i/k_i^\p \qquad \qquad (0 \leq i \leq D-2).
\end{equation}
One routinely checks that 
$$
xF^\p_i = b^\p_iF^\p_{i+1} + a^\p_iF^\p_i + c^\p_iF^\p_{i-1} \qquad \qquad (0 \leq i \leq D-3),
$$
where $F^\p_{-1}=0$. Consider the parameter array $p(\Phi^\p)=(\{\tht^\p_i\}^{D-2}_{i=0}, \{\tht^{*\p}_i\}^{D-2}_{i=0}, \{\varphi^\p_i\}^{D-2}_{i=0}, \{\phi^\p_i\}^{D-2}_{i=0})$. Applying \cite[Theorem 23.2]{PT:Madrid} to $\Phi^\p$ we find that for  $0 \leq i \leq D-2$
\begin{equation}\label{poly_Fi;perp}
F^\p_i(x) = \sum^i_{j=0} \frac{(\tht^{*\p}_i-\tht^{*\p}_0)(\tht^{*\p}_i-\tht^{*\p}_1)\cdots(\tht^{*\p}_i-\tht^{*\p}_{j-1})}{\varphi^\p_1\varphi^\p_2\cdots\varphi^\p_j}(x-\tht^\p_0)(x-\tht^\p_1)\cdots(x-\tht^\p_{j-1}).
\end{equation}

\begin{lemma}{\rm\cite[Theorem 6.10, Corollary 6.11]{JHL}}\label{rels;p;p-perp}
The parameter array $p(\Phi^\p)$ has $q$-Racah type.
With reference to the scalars $h,h^*,s,s^*,r_1,r_2$ associated with $p(\Phi)$ and the scalars $h^\p, h^{*\p}, s^\p,s^{*\p},r_1^\p,r_2^\p$ associated with $p(\Phi^\p)$, their relations are as follows.
\begin{align}
\label{perp;s,r,h(1)}
&& & h^\p = hq^{-1}, && s^\p = sq^2, && r_1^\p = r_1q, && \\
\label{perp;s,r,h(2)}
&& & h^{*\p} = h^*q^{-1}, && s^{*\p} = s^*q^2, && r_2^\p = r_2q. && 
\end{align}
\end{lemma}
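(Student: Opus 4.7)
The plan is to identify the parameter array $p(\Phi^\p)$ directly and verify it equals the one obtained by substituting (\ref{perp;s,r,h(1)})--(\ref{perp;s,r,h(2)}) into the $q$-Racah formulas (\ref{theta_i})--(\ref{phi_i}) with diameter $D-2$. Since the excerpt has already established that $\Phi^\p$ is a Leonard system on $\Mxp$ with primitive idempotents $E^\p_i = E_{i+1}$ and $E^{*\p}_i = E^*_{i+1}$ for $0 \le i \le D-2$, and that $\{v^\p_i\}^{D-2}_{i=0}$ from (\ref{basis;v_perp}) is a $\Phi^\p$-standard basis, all data required for the computation is in place.

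For the eigenvalues I would use $\tht^\p_i = \tht_{i+1}$ and $\tht^{*\p}_i = \tht^*_{i+1}$, inherited from the shifted primitive idempotents. Substituting these into (\ref{theta_i}), the claim $\tht_{i+1} - \tht_1 = hq^{-1}(1-q^i)(1-sq^{i+3})q^{-i}$ reduces to the algebraic identity
$$
(1-q^{i+1})(1-sq^{i+2})q^{-i} - (1-q)(1-sq^2) = (1-q^i)(1-sq^{i+3})q^{-i},
$$
since both sides expand to $q^{-i} - 1 + sq^{i+3} - sq^3$. This confirms $h^\p = hq^{-1}$, $s^\p = sq^2$, and the dual argument yields $h^{*\p} = h^*q^{-1}$, $s^{*\p} = s^*q^2$.

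For the split sequences the plan is to compute the tridiagonal matrix representing $A$ in the $\Phi^\p$-standard basis $\{v^\p_i\}$ and read off its intersection numbers $a^\p_i, b^\p_i, c^\p_i$. Because the action of $A$ on $\W$ is governed by the equitable partition displayed in Figure 2, applying $A$ to $v^\p_i = \xi_{i+1}\hat{C}^+_i + \xi_{i+1}\epsilon_{i+1}\hat{C}^-_{i+1}$ produces a $\C$-linear combination of the $\hat{C}^\pm_j$ that can be rewritten in terms of $v^\p_{i-1}, v^\p_i, v^\p_{i+1}$. With the intersection numbers in hand, the first split sequence $\varphi^\p_i$ is then determined through the standard Leonard-system relations to the $\tht^{*\p}_j$, and a symmetric computation using $\Phi^{\p\Downarrow}$ delivers $\phi^\p_i$. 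Recognizing the resulting closed forms as (\ref{varphi_i})--(\ref{phi_i}) with $r_1^\p = r_1q$ and $r_2^\p = r_2q$ is a direct algebraic matching. The main obstacle I anticipate is the bookkeeping in expanding $Av^\p_i$ through the factors $\xi_{i+1}, \epsilon_{i+1}$ of (\ref{xi;epsilon}) and repackaging the output into the $q$-Racah form; once that is done, the consistency condition $r_1^\p r_2^\p = s^\p s^{*\p} q^{(D-2)+1}$ follows instantly, as $r_1^\p r_2^\p = r_1r_2 q^2 = ss^* q^{D+3}$ by the original constraint $r_1r_2 = ss^* q^{D+1}$, while $s^\p s^{*\p} q^{D-1} = (sq^2)(s^*q^2)q^{D-1} = ss^* q^{D+3}$, confirming that $p(\Phi^\p)$ is a genuine $q$-Racah parameter array of diameter $D-2$.
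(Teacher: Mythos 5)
The paper itself does not prove this lemma --- it imports it wholesale from \cite[Theorem 6.10, Corollary 6.11]{JHL} --- so the relevant comparison is between your outline and the computation carried out there, and your route is essentially that one: read off the eigenvalue data from the shifted idempotents $E^\p_i=E_{i+1}$, $E^{*\p}_i=E^*_{i+1}$, and extract the split sequences from the action of $A$ on the $\Phi^\p$-standard basis $\{v^\p_i\}$. The half you actually execute is correct: your identity $(1-q^{i+1})(1-sq^{i+2})q^{-i}-(1-q)(1-sq^2)=(1-q^i)(1-sq^{i+3})q^{-i}$ checks out (both sides equal $q^{-i}-1+sq^{i+3}-sq^3$), and it does pin down $h^\p=hq^{-1}$, $s^\p=sq^2$ and dually $h^{*\p}=h^*q^{-1}$, $s^{*\p}=s^*q^2$. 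The consistency check $r_1^\p r_2^\p=s^\p s^{*\p}q^{D-1}$ is also right.

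The gap is that the relations $r_1^\p=r_1q$, $r_2^\p=r_2q$ --- the only part of the lemma not determined by the eigenvalues --- are never actually established. The parameters $r_1,r_2$ enter $p(\Phi^\p)$ solely through $\varphi^\p_i$ and $\phi^\p_i$ in (\ref{varphi_i})--(\ref{phi_i}), so everything rests on the computation you defer: expanding $Av^\p_i$ through $\xi_{i+1},\epsilon_{i+1}$, obtaining the intersection numbers $a^\p_i,b^\p_i,c^\p_i$ (this is \cite[(83)]{JHL}), and converting those into the split sequences. Your plan for doing this is sound and no step would fail, but as written the proposal proves four of the six claimed relations and asserts the remaining two as ``a direct algebraic matching'' that is not performed; the consistency identity $r_1^\p r_2^\p=ss^*q^{D+3}$ only constrains the product, not the individual values. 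Two smaller points: to conclude that $p(\Phi^\p)$ ``has $q$-Racah type'' in the sense of Example \ref{ex;q-racah;PA} you should also note that the non-degeneracy conditions (i), (ii) for the primed parameters follow from those for the unprimed ones (e.g. $s^\p q^i=sq^{i+2}$ for $2\leq i\leq 2D-4$ lies inside the range covered by the original assumption); and $r_1^\p,r_2^\p$ are only determined as an unordered pair, which is harmless here since (\ref{varphi_i})--(\ref{phi_i}) are symmetric in them.
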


\noindent
Consider the scalars $a^\p, b^\p, c^\p, d^\p$ associated with $p(\Phi^\p)$ in a view of Definition \ref{abcd}. Then by Lemma \ref{rels;p;p-perp}, 
\begin{equation}\label{rels;abcd;perp}
a^\p= \left(\tfrac{r_1r_2}{s^*q^{D-2}}\right)^{1/2}, \quad
b^\p=\left( \tfrac{s^*}{r_1r_2q^{D-2}} \right)^{1/2}, \quad
c^\p=\left( \tfrac{r_2s^*q^{D+2}}{r_1} \right)^{1/2}, \quad
d^\p=\left( \tfrac{r_1s^*q^{D+2}}{r_2} \right)^{1/2}.
\end{equation}
Using Definition \ref{abcd} one can readily check that
\begin{equation*}
a^\p= aq, \qquad
b^\p=bq, \qquad
c^\p=c, \qquad
d^\p=d.
\end{equation*}
We define a finite sequence of the polynomials $\{p^\p_i(y+y)\}^{D-2}_{i=0}$ which are defined by the scalars $a^\p, b^\p,c^\p,d^\p$ associated with $p(\Phi^\p)$:
\begin{equation}\label{AWpoly;perp;finite}
p^\p_i(y+y^{-1}) := p_i(y+y^{-1}; a^\p, b^\p, c^\p, d^\p \mid q )=p_i(y+y^{-1}; aq, bq, c, d \mid q ).
\end{equation}
Using (\ref{rels;abcd;perp}) we find
\begin{equation}\label{AWpoly;s,s*;perp}
p^\p_i(y+y^{-1}) = 
{_4}\phi_3
\left(
\begin{matrix}
q^{-i}, ~ s^*q^{i+3}, ~ (sq)^{1/2}qy, ~  (sq)^{1/2}qy^{-1} \\
q^{-D+2}, ~ r_1q^2, ~ r_2q^2
\end{matrix}
~\middle| ~ q,~q
\right).
\end{equation}

\begin{lemma}\label{rel;F,p;perp} 
Recall the polynomial sequences $\{F^\p_i\}^{D-2}_{i=0}$ from {\rm(\ref{poly_Fi;perp})} 
and $\{p^\p_i(y+y^{-1})\}^{D-2}_{i=0}$ from {\rm(\ref{AWpoly;s,s*;perp})}. 
Let $x$ be of the form 
\begin{equation*}
h(sq)^{1/2}(y+y^{-1}) + (\tht_0 - h - hsq),
\end{equation*}
where $h,s,\tht_0$ are associated with $p(\Phi)$. Then 
\begin{equation*}
F^\p_i(x) = p^\p_i(y+y^{-1}), \qquad i=0,1,2,\ldots D-2.
\end{equation*}
\end{lemma}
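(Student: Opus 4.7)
The plan is to follow the strategy of Lemma \ref{rel;F,p} verbatim, applied to the Leonard system $\Phi^\p$ in place of $\Phi$. By Lemma \ref{rels;p;p-perp}, $\Phi^\p$ has $q$-Racah type with shifted parameters $h^\p = hq^{-1}$, $h^{*\p} = h^*q^{-1}$, $s^\p = sq^2$, $s^{*\p} = s^*q^2$, $r^\p_j = r_jq$, and with diameter $D-2$. Since $E^\p_i = E_{i+1}$ and $E^{*\p}_i = E^*_{i+1}$, we also have $\tht^\p_i = \tht_{i+1}$ and $\tht^{*\p}_i = \tht^*_{i+1}$ for $0\le i\le D-2$.

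First I would apply the same manipulation used in the proof of Lemma \ref{rel;F,p}, but with $(h, s, s^*, r_1, r_2, D, \tht_0, \tht^*_0)$ replaced by their primed counterparts. That is, substitute
\[
x^\p := h^\p(s^\p q)^{1/2}(y+y^{-1}) + (\tht^\p_0 - h^\p - h^\p s^\p q)
\]
into the expansion (\ref{poly_Fi;perp}) of $F^\p_i(x^\p)$. Each factor rewrites as
\[
x^\p - \tht^\p_k \;=\; -h^\p q^{-k}\bigl(1-(s^\p q)^{1/2}q^k y\bigr)\bigl(1-(s^\p q)^{1/2}q^k y^{-1}\bigr),
\]
and, together with the evaluations of $(\tht^{*\p}_i - \tht^{*\p}_k)$ and $\varphi^\p_k$, the sum collapses via the definition of basic hypergeometric series into the ${_4}\phi_3$ expression (\ref{AWpoly;s,s*;perp}). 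By (\ref{AWpoly;perp;finite}) this equals $p^\p_i(y+y^{-1})$.

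Second I would verify that $x^\p$ coincides with the $x$ written in the lemma statement (in the unprimed parameters). The coefficient identity
\[
h^\p(s^\p q)^{1/2} \;=\; hq^{-1}\cdot (sq^3)^{1/2} \;=\; hq^{-1}\cdot q(sq)^{1/2} \;=\; h(sq)^{1/2}
\]
is immediate. For the constant term, substitute $\tht^\p_0 = \tht_1 = \tht_0 + h(1-q)(1-sq^2)q^{-1}$ together with $h^\p = hq^{-1}$ and $s^\p q = sq^3$ into $\tht^\p_0 - h^\p - h^\p s^\p q$; the terms involving $hq^{-1}$ and $hsq^2$ cancel, leaving $\tht_0 - h - hsq$. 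Hence $x^\p = x$, and the conclusion $F^\p_i(x) = p^\p_i(y+y^{-1})$ follows.

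The only potentially subtle point is this bookkeeping step: the simultaneous shifts $h\mapsto hq^{-1}$ and $s\mapsto sq^2$ must conspire (with the replacement $\tht_0\mapsto\tht_1$) to leave both the dilation coefficient $h(sq)^{1/2}$ and the offset $\tht_0 - h - hsq$ invariant. Apart from this routine cancellation the lemma is a reapplication of Lemma \ref{rel;F,p} to $\Phi^\p$, so no new machinery is required.
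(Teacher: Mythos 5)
Your proposal is correct and takes essentially the same route as the paper, whose proof is simply the remark ``Similar to Lemma \ref{rel;F,p}.'' You have in addition made explicit, and correctly verified, the one point the paper leaves implicit: that the shifts $h\mapsto hq^{-1}$, $s\mapsto sq^{2}$, $\tht_0\mapsto\tht_1$ leave both the dilation coefficient $h(sq)^{1/2}$ and the offset $\tht_0-h-hsq$ unchanged, so the affine substitution for $\Phi^\p$ coincides with the one in the lemma statement.
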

\begin{proof}
Similar to Lemma \ref{rel;F,p}.
\end{proof}

Recall the $\Phi$-standard basis $\{v_i\}^D_{i=0}$ for $\Mx$ and the $\Phi^\p$-standard basis $\{v_i^\p\}^{D-2}_{i=0}$ for $\Mxp$. By \cite[Lemma 8.3]{JHL}, for $1 \leq i \leq D-1$
\begin{equation}\label{lem8.3}
\hat{C}^+_{i-1} = \frac{\epsilon_i}{\epsilon_i-1}v_i + \frac{1}{\xi_i(1-\epsilon_i)}v^\p_{i-1},
\qquad 
\hat{C}^-_{i} = \frac{1}{1-\epsilon_i}v_i + \frac{1}{\xi_i(\epsilon_i-1)}v^\p_{i-1}.
\end{equation}

\begin{lemma} \label{C+-;f;fp}
For $1 \leq i \leq D-1$,
\begin{align}
\label{C+;f;fp}
& \hat{C}^+_{i-1} = \frac{\epsilon_i}{\epsilon_i-1}f_i(A)v_0 + \frac{1}{\xi_i(1-\epsilon_i)}f^\p_{i-1}(A)v_0^\p, \\
\label{C-;f;fp}
& \hat{C}^-_{i} = \frac{1}{1-\epsilon_i}f_i(A)v_0 + \frac{1}{\xi_i(\epsilon_i-1)}f^\p_{i-1}(A)v^\p_0.
\end{align}
And
$$
\hat{C}^-_0 = f_0(A)v_0, \qquad \qquad \hat{C}^+_{D-1} = f_D(A)v_0.
$$
\end{lemma}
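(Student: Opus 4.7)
The plan is to prove this by direct substitution, combining the two ingredients already at hand in the excerpt: the decomposition formulas in (\ref{lem8.3}) that express $\hat{C}^+_{i-1}$ and $\hat{C}^-_i$ in terms of the standard bases $\{v_i\}$ and $\{v^\p_i\}$, together with the polynomial-shift identities (\ref{shift;fi}) and (\ref{shift;fi;perp}) that identify these basis vectors as $v_i = f_i(A)v_0$ and $v^\p_{i-1} = f^\p_{i-1}(A)v^\p_0$.

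More precisely, I would first fix $i$ with $1 \leq i \leq D-1$ and note that (\ref{shift;fi}) gives $v_i = f_i(A)v_0$, while (\ref{shift;fi;perp}) applied with index $i-1 \in \{0,1,\ldots,D-2\}$ gives $v^\p_{i-1} = f^\p_{i-1}(A)v^\p_0$. Substituting these two identities into the right-hand sides of the two formulas in (\ref{lem8.3}) yields (\ref{C+;f;fp}) and (\ref{C-;f;fp}) immediately, with the scalar coefficients $\epsilon_i/(\epsilon_i-1)$, $1/(\xi_i(1-\epsilon_i))$, $1/(1-\epsilon_i)$, $1/(\xi_i(\epsilon_i-1))$ unchanged.

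For the two boundary cases I would argue separately. The equality $\hat{C}^-_0 = f_0(A)v_0$ is automatic: by the paragraph preceding (\ref{shift;fi}) we have $v_0 = \hat{C}^-_0 = \hat{x}$, and the recurrence defining $\{f_i\}$ starts from $f_0=1$, so $f_0(A)v_0 = v_0 = \hat{C}^-_0$. Similarly, the same paragraph gives $v_D = \hat{C}^+_{D-1}$, and by (\ref{shift;fi}) with $i=D$ this equals $f_D(A)v_0$, yielding $\hat{C}^+_{D-1} = f_D(A)v_0$. Since every step is a direct substitution or a reading-off of definitions, there is essentially no analytical obstacle; the only care needed is bookkeeping of the index ranges to ensure that (\ref{shift;fi;perp}) is applied only for indices in $\{0,\ldots,D-2\}$, which is exactly why the two endpoint cases $i=0$ (i.e., $\hat{C}^-_0$) and $i=D$ (i.e., $\hat{C}^+_{D-1}$) are handled separately.
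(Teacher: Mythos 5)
Your proposal is correct and is essentially identical to the paper's own proof: the paper likewise obtains (\ref{C+;f;fp}) and (\ref{C-;f;fp}) by substituting (\ref{shift;fi}) and (\ref{shift;fi;perp}) into (\ref{lem8.3}), and handles the endpoints via $\hat{C}^-_0 = v_0$ and $\hat{C}^+_{D-1} = v_D$.
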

\begin{proof}
To get (\ref{C+;f;fp}), (\ref{C-;f;fp}) use (\ref{shift;fi}), (\ref{shift;fi;perp}) together
with (\ref{lem8.3}).
Note that $\hat{C}^-_0 = v_0$ and $\hat{C}^+_{D-1} = v_D.$ The result follows.
\end{proof}

We finish this section with a few comments. 
The following lemmas will be useful in the sequel.

\begin{lemma}\label{norm;C;pm} 
Recall the Hermitian inner product $\< \cdot, \cdot \>_V$ from the first
paragraph in Section {\rm \ref{S3;DRG}}.
For $0 \leq i \leq D-1$,
\begin{align*}
\Vert \hat{C}^-_i \Vert^2 & = \left(\frac{s^*q^D}{r_1r_2}\right)^i\frac{(q^{1-D}, s^*q^2, r_1q, r_2q;q)_i}{(q,s^*q^{D+2},s^*q/r_1,s^*q/r_2;q)_i}, \\
\Vert \hat{C}^+_i \Vert^2 & =  -\frac{s^*(1-r_1q)(1-r_2q)}{(r_1-s^*q)(r_2-s^*q)}\left(\frac{s^*q^D}{r_1r_2}\right)^i\frac{(q^{1-D}, s^*q^2, r_1q^2, r_2q^2; q)_i}{(q, s^*q^{D+2}, s^*q^2/r_1, s^*q^2/r_2; q)_i}.
\end{align*}
\end{lemma}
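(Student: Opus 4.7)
The key observation is that each $\hat{C}^{\sigma}_i$ is a $0/1$ characteristic vector, so $\Vert\hat{C}^\sigma_i\Vert^2 = |C^\sigma_i|$ and the task reduces to computing the cardinalities $|C^\sigma_i|$. My plan is to exploit the orthogonal decomposition $\W = \Mx \oplus \Mxp$ to extract a $2 \times 2$ linear system for $|C^+_{i-1}|$ and $|C^-_i|$ at each $i$, and then substitute the $q$-Racah formulas.

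For $1 \le i \le D-1$, I will use that $\Ga_i(x) = C^+_{i-1} \sqcup C^-_i$, so $v_i = A_i\hat{x} = \hat{C}^+_{i-1} + \hat{C}^-_i$, while $v^\p_{i-1} = \xi_i \hat{C}^+_{i-1} + \xi_i\epsilon_i \hat{C}^-_i$ by \eqref{basis;v_perp}. The disjoint supports of $\hat{C}^+_{i-1}$ and $\hat{C}^-_i$ give
\[
\Vert v_i\Vert^2 = |C^+_{i-1}| + |C^-_i|, \qquad \<v_i, v^\p_{i-1}\>_V = \xi_i\bigl(|C^+_{i-1}| + \epsilon_i |C^-_i|\bigr).
\]
The first equals the $i$-th valency $k_i = |\Ga_i(x)|$. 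The second vanishes, since $v_i \in \Mx$ and $v^\p_{i-1} \in \Mxp$ lie in orthogonal submodules of $\W$. Solving this $2 \times 2$ system yields
\[
|C^-_i| = \frac{k_i}{1-\epsilon_i}, \qquad |C^+_{i-1}| = \frac{-\epsilon_i\, k_i}{1-\epsilon_i}.
\]
The boundary cases $\hat{C}^-_0 = \hat{x}$ and $\hat{C}^+_{D-1} = v_D$ are immediate, giving $\Vert\hat{C}^-_0\Vert^2 = 1$ and $\Vert\hat{C}^+_{D-1}\Vert^2 = k_D$.

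To finish, I will substitute the explicit expressions. From \eqref{xi;epsilon} a direct computation yields
\[
1 - \epsilon_i = \frac{-(1-q^D)(1-s^*q^{2i+1})}{q^D(1-q^{i-D})(1-s^*q^{i+1})},
\]
and the standard $q$-Racah formulas for the intersection numbers $b_j, c_j$ in terms of $h, s^*, r_1, r_2, q, D$ express $k_i = \prod_{j=0}^{i-1} b_j/c_{j+1}$ as a product of $q$-Pochhammer symbols. Telescoping in $|C^-_i| = k_i/(1-\epsilon_i)$ then yields part (i). For part (ii) I would use $\Vert\hat{C}^+_i\Vert^2 = -\epsilon_{i+1}\Vert\hat{C}^-_{i+1}\Vert^2$ together with the Pochhammer identities
\[
\frac{(r_jq;q)_{i+1}}{(s^*q/r_j;q)_{i+1}} = \frac{r_j(1-r_jq)}{r_j - s^*q}\cdot\frac{(r_jq^2;q)_i}{(s^*q^2/r_j;q)_i} \qquad (j=1,2)
\]
to peel off the boundary $(r_j q)$ factors and produce the stated prefactor $-s^*(1-r_1q)(1-r_2q)/((r_1-s^*q)(r_2-s^*q))$.

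The main obstacle will be routine but careful $q$-Pochhammer bookkeeping: tracking the sign from $q^D - 1 = -(1-q^D)$ in $1 - \epsilon_i$, and performing the index shift $(q^{1-D};q)_i = (1-q^{i-D})(q^{-D};q)_i/(1-q^{-D})$ needed to convert the $(q^{-D};q)_i$ that naturally appears in $k_i$ into the $(q^{1-D};q)_i$ appearing in the stated norm formulas.
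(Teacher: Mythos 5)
Your proposal is correct, and it takes a genuinely different route from the paper: the paper dispatches this lemma in one line by citing \cite{JHL} (Lemma 5.7, equations (17)--(20), and Corollary 4.9 there), i.e.\ it imports the cardinalities $|C^{\pm}_i|$ from the companion paper, whereas you rederive them self-containedly from data available here. Your central device --- pairing $\Vert v_i\Vert^2=|C^+_{i-1}|+|C^-_i|=k_i$ with $\<v_i,v^{\p}_{i-1}\>_V=0$ (orthogonality of $\Mx$ and $\Mxp$ together with the change-of-basis coefficients in (\ref{basis;v_perp})) to solve a $2\times2$ system for the two cardinalities --- is sound, and I verified that the identity $1-\epsilon_i=-(1-q^D)(1-s^*q^{2i+1})/\bigl(q^D(1-q^{i-D})(1-s^*q^{i+1})\bigr)$, the product formula for $k_i$ in Lemma \ref{k;kp;nu;nup}(i), the index shifts $(q^{-D};q)_i(1-q^{i-D})=(1-q^{-D})(q^{1-D};q)_i$ and $(s^*q;q)_i(1-s^*q^{i+1})=(1-s^*q)(s^*q^2;q)_i$, and the constraint $r_1r_2=ss^*q^{D+1}$ reproduce exactly the stated Pochhammer expressions; likewise $r_1r_2(1-s^*q/r_1)(1-s^*q/r_2)=(r_1-s^*q)(r_2-s^*q)$ yields the prefactor in the second formula. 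What your route buys is a proof readable without opening \cite{JHL}, and it makes transparent that the formulas are forced by the orthogonal decomposition $\W=\Mx\oplus\Mxp$; what the paper's citation buys is brevity and consistency with where these quantities were first computed. One loose end to tie up: your relation $|C^+_i|=-\epsilon_{i+1}|C^-_{i+1}|$ is only available for $0\le i\le D-2$ (there is no $\epsilon_D$ or $C^-_D$), so the case $i=D-1$ of the second display must be checked separately by confirming that the stated expression at $i=D-1$ equals $k_D$ from Lemma \ref{k;kp;nu;nup}(i); this is the same kind of routine Pochhammer bookkeeping you already perform, but it should be stated explicitly rather than folded into ``the boundary cases are immediate.''
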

\begin{proof} Evaluate \cite[Lemma 5.7]{JHL} using \cite[(17)--(20)]{JHL} and \cite[Corollary 4.9]{JHL}. The results routinely follow.
\end{proof}
\noindent
Note that $\Vert \hat{C}^-_i \Vert^2, \Vert \hat{C}^+_i \Vert^2$ are all positive integral,
since they are equal to the cardinality of $C^-_i, C^+_i$, respectively.
\begin{lemma}{\rm\cite[Section 18]{PT:LP_qPoly}}\label{k;kp;nu;nup}
\begin{itemize}
\item[\rm (i)] Recall the scalar $k_i$ $(0 \leq i \leq D)$ from {\rm(\ref{scalar;ki})}. Then
$$
k_i = \frac{(r_1q;q)_i(r_2q;q)_i(q^{-D};q)_i(s^*q;q)_i(1-s^*q^{2i+1})}{s^iq^i(q;q)_i(s^*q/r_1;q)_i(s^*q/r_2;q)_i(s^*q^{D+2};q)_i(1-s^*q)}.
$$
To get $k^*_i$, replace $(s,s^*)$ with $(s^*,s)$.
\item[\rm (ii)]
Recall the scalar $k_i^\p~(0 \leq i \leq D-2)$ from {\rm (\ref{scalar;kiperp})}. Then
$$
k^\p_i = \frac{(r_1q^2;q)_i(r_2q^2;q)_i(q^{-D+2};q)_i(s^*q^3;q)_i(1-s^*q^{2i+3})}{s^iq^{3i}(q;q)_i(s^*q^2/r_1;q)_i(s^*q^2/r_2;q)_i(s^*q^{D+2};q)_i(1-s^*q^3)}.
$$
To get $k^{*\p}_i$, replace $(s,s^*)$ with $(s^*,s)$.
\item[\rm (iii)] Let $\nu$ be the scalar such that $tr(E_0E^*_0)=\nu^{-1}$. Then
$$
\nu = \frac{(sq^2;q)_D(s^*q^2;q)_D}{r_1^Dq^D(sq/r_1;q)_D(s^*q/r_1)_D}.
$$
\item[\rm (iv)] Let $\nu^\p$ be the scalar such that $tr(E^\p_0E^{*\p}_0)=\nu^{\p-1}$. Then
$$
\nu^\p = \frac{(sq^4;q)_{D-2}(s^*q^4;q)_{D-2}}{r^{D-2}_1q^{2D-4}(sq^2/r_1;q)_{D-2}(s^*q^2/r_1)_{D-2}}.
$$
\end{itemize}
\begin{proof}
(i), (iii): From \cite[p. 37]{PT:LP_qPoly}.\\
(ii): In part (i) replace $D$ by $D-2$ and use (\ref{perp;s,r,h(1)}), (\ref{perp;s,r,h(2)}) to get the result.\\
(iv): In part (iii) replace $D$ by $D-2$ and use (\ref{perp;s,r,h(1)}), (\ref{perp;s,r,h(2)}) to get the result.
\end{proof}
\end{lemma}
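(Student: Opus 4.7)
The plan is to treat this as a parameter-substitution lemma: parts (i) and (iii) are standard closed-form identities for the intersection-number product $k_i$ and the trace scalar $\nu$ attached to a Leonard system of $q$-Racah type, and parts (ii) and (iv) will be obtained mechanically from (i) and (iii) via the substitution rule supplied by Lemma \ref{rels;p;p-perp}.

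For part (i), I would invoke the tabulated formulas in Terwilliger's survey on Leonard pairs of $q$-Racah type, namely \cite{PT:LP_qPoly}. Recall that $k_i = b_0 b_1 \cdots b_{i-1}/(c_1 c_2 \cdots c_i)$ where $b_i, c_i$ are the intersection numbers of $\Phi$ read off its parameter array $p(\Phi)$; these intersection numbers have known rational expressions in $\{\theta_i\}, \{\theta^*_i\}, \{\varphi_i\}, \{\phi_i\}$, and after using the explicit $q$-Racah substitutions (\ref{theta_i})--(\ref{phi_i}) together with $r_1 r_2 = s s^* q^{D+1}$ one obtains precisely the $q$-Pochhammer product stated. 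The side remark for $k^*_i$ follows because passing from $\Phi$ to $\Phi^*$ swaps $(s,s^*)$ and $(h,h^*)$.

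For part (iii), the scalar $\nu$ is characterized by $\mathrm{tr}(E_0 E^*_0) = \nu^{-1}$; its value is likewise tabulated in \cite[p.~37]{PT:LP_qPoly} in terms of $h, h^*, s, s^*, r_1, r_2, D$. Again one just has to rewrite the tabulated formula using our normalizations, and the stated expression falls out.

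Parts (ii) and (iv) reduce to (i) and (iii) by Lemma \ref{rels;p;p-perp}: the parameter array $p(\Phi^\p)$ has $q$-Racah type with diameter $D-2$ and with parameters $h^\p = hq^{-1}$, $h^{*\p} = h^*q^{-1}$, $s^\p = sq^2$, $s^{*\p} = s^*q^2$, $r_1^\p = r_1 q$, $r_2^\p = r_2 q$. Plugging these into the formula from (i) with $D$ replaced by $D-2$, one checks that $(s^\p)^i q^i = s^i q^{3i}$, $s^{*\p} q^{2i+1} = s^* q^{2i+3}$, $s^{*\p} q^{(D-2)+2} = s^* q^{D+2}$, $s^{*\p} q/r_1^\p = s^* q^2/r_1$, and so on, yielding (ii); the same substitution applied to (iii) gives (iv). The only real work is this bookkeeping of $q$-Pochhammer shifts, and I expect that to be the main (though routine) obstacle: one must keep track of how the shift $s \mapsto sq^2$ interacts with the index-$i$ shifts inside the $q$-Pochhammer symbols in order to match the stated form exactly.
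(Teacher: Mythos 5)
Your proposal matches the paper's proof: parts (i) and (iii) are quoted from the tabulated formulas in \cite[p.~37]{PT:LP_qPoly}, and parts (ii) and (iv) are obtained by replacing $D$ with $D-2$ and substituting the primed parameters from Lemma \ref{rels;p;p-perp}, exactly as you describe. Your bookkeeping of the $q$-Pochhammer shifts (e.g.\ $(s^{\p})^i q^i = s^i q^{3i}$ and $s^{*\p}q^{D} = s^*q^{D+2}$) checks out, so the argument is correct and essentially identical to the paper's.
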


\section{The universal DAHA of type $(C^{\vee}_1, C_1)$}\label{Section;UDAHA}

In this section we discuss the universal DAHA of type $(C^{\vee}_1,C_1)$ 
and its properties. For notational convenience define an index set $\I=\{0,1,2,3\}$.
\begin{definition}\label{Def;UDAHA}\cite[Definition 3.1]{PT:DAHA} 
The {\it universal DAHA} of type $(C^{\vee}_1, C_1)$ is the $\C$-algebra $\H$ defined by generators $\{t^{\pm1}_n\}_{n \in \I}$ and relations
\begin{equation*}
{\rm (i)}~ t_nt_n^{-1} = t_n^{-1}t_n=1 ~ (n \in \I); \quad
{\rm (ii)}~ t_n+t_n^{-1} \text{ is central } ~ (n \in \I); \quad
{\rm (iii)}~ t_0t_1t_2t_3 = q^{-1/2}.
\end{equation*}
\end{definition}
\noindent
For notational convenience define the following elements in $\H$:
\begin{align*}
& \Y=t_0t_1, && \X=t_3t_0, && \wt\X=t_1t_2=q^{-1/2}(t_3t_0)^{-1}, &&\\
& \A=\Y+\Y^{-1}, && \B = \X+\X^{-1}, && \wt\B=\wt\X+\wt\X^{-1}.
\end{align*}

\begin{lemma}\label{antiauto;Hq}
There exists a unique antiautomorphism $\dagger$ of $\H$ that sends
$$
t_0 \mapsto t_1, \qquad 
t_1 \mapsto t_0, \qquad
t_2 \mapsto t_3, \qquad
t_3 \mapsto t_2.
$$
Moreover $\dagger^2=1$.
\end{lemma}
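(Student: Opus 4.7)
The plan is to exploit the presentation of $\H$ given in Definition \ref{Def;UDAHA}. Since $\H$ is generated by $\{t_n^{\pm 1}\}_{n \in \I}$, any antiautomorphism is determined by its values on these generators, which immediately yields uniqueness. For existence, I would define $\dagger$ on the free associative algebra on the letters $\{t_n^{\pm 1}\}_{n \in \I}$ by the prescribed rule (extended antimultiplicatively, so $(ab)^\dagger := b^\dagger a^\dagger$), with the convention $(t_n^{-1})^\dagger := (t_n^\dagger)^{-1}$, and verify that the three families of defining relations (i)--(iii) are preserved. The map then descends to an antiautomorphism of $\H$.

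For relation (i), $t_n t_n^{-1} = 1$ is sent to $(t_n^{-1})^\dagger t_n^\dagger = (t_n^\dagger)^{-1} t_n^\dagger = 1$, which holds automatically. For relation (ii), the prescription $(t_0 \leftrightarrow t_1,\ t_2 \leftrightarrow t_3)$ merely permutes the set $\{t_n\}_{n \in \I}$, so for each $n$ the element $t_n^\dagger + (t_n^\dagger)^{-1}$ coincides with $t_m + t_m^{-1}$ for some $m \in \I$, hence is central.

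The only substantive check is relation (iii). Applying $\dagger$ to $t_0 t_1 t_2 t_3 = q^{-1/2}$ and using antimultiplicativity yields
\[
t_3^\dagger t_2^\dagger t_1^\dagger t_0^\dagger = q^{-1/2}, \qquad \text{i.e.} \qquad t_2 t_3 t_0 t_1 = q^{-1/2}.
\]
This is a cyclic rearrangement of (iii), which I would justify as follows: since $q^{-1/2}$ is a scalar and $(t_0 t_1)(t_2 t_3) = q^{-1/2}$, we have $t_2 t_3 = q^{-1/2}(t_0 t_1)^{-1}$; right-multiplying by $t_0 t_1$ then gives $(t_2 t_3)(t_0 t_1) = q^{-1/2}$, as required. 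This is the main (and essentially only) point where the defining relation is used.

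Finally, $\dagger^2$ is an algebra automorphism of $\H$ (being the square of an antiautomorphism), and on each generator $t_n$ it acts trivially because the permutation $(t_0 \leftrightarrow t_1,\ t_2 \leftrightarrow t_3)$ is an involution. Since the $t_n^{\pm 1}$ generate $\H$, we conclude $\dagger^2 = 1$.
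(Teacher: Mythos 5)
Your proof is correct and follows the same route the paper intends: the paper's proof is simply ``Use Definition \ref{Def;UDAHA}'', and your argument is exactly the standard verification this one-liner leaves to the reader (define $\dagger$ on the free algebra, check relations (i)--(iii) are preserved --- with the cyclic rearrangement of $t_0t_1t_2t_3=q^{-1/2}$ being the only substantive step --- and observe $\dagger^2$ fixes the generators).
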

\begin{proof} Use Definition \ref{Def;UDAHA}.
\end{proof}
\noindent
By Lemma \ref{antiauto;Hq} we have the following:
\begin{equation}\label{X,Y;dagger}
\Y^{\dagger}=\Y, \qquad \X^{\dagger}=\wt{\X}, \qquad \wt{\X}^{\dagger}=\X.
\end{equation}

\noindent
In \cite[Section~11]{JHL} the author showed that the space $\W$ is an $\H$-module as well as a $\T$-module, and displayed its module structure in detail. In the present paper, for the purpose of our study we will twist $\W$ via a certain $\C$-algebra automorphism of $\H$. Recall the $\H$-module $\W$ from \cite[Section~11]{JHL}. Consider a $\C$-algebra automorphism $\rho: \H \to \H$ that sends
\begin{align*}
&& t_0 \mapsto t_1, &&
t_1 \mapsto t_0, &&
t_2 \mapsto t_0^{-1}t_3t_0, &&
t_3 \mapsto t_1t_2t_1^{-1}. &&
\end{align*}
Observe that ${\rho}^2=1$. There exists an $\H$-module structure on $\W$, called  {\it $\W$  twisted via $\rho$}, that behaves as follows; for all $h \in \H, w \in \W$, the vector $h.w$ computed in $\W$ twisted via $\rho$ coincides with the vector $h^{\rho}.w$ computed in the original $\H$-module $\W$. 
We display the $\H$-module structure $\W$ twisted via $\rho$ in Appendix \ref{Apdx;H-mod} in detail. 
For the rest of the paper, we regard an $\H$-module $\W$ as the $\H$-module $\W$ twisted via $\rho$. The $\H$-module structure on $\W$ is determined by the scalars $q,s,s^*,r_1,r_2,D$ associated with $p(\Phi)$. We denote this module by $\W(s,s^*,r_1,r_2,D;q)$.

\smallskip
\noindent
Define the scalars $\{\k_n\}_{n\in \I}$ by
\begin{equation}\label{k_n}
\k_0 = \left(\frac{1}{q^{D}}\right)^{1/2}, \qquad
\k_1 = \left(\frac{r_1r_2}{s^*}\right)^{1/2}, \qquad
\k_2 = \left( \frac{r_2}{r_1} \right)^{1/2}, \qquad
\k_3 = \left(s^*q^{D+1}\right)^{1/2}.
\end{equation}

\begin{lemma}\label{kn;not=1}
For each $n \in \I$, the scalar $\kappa_n$ is not equal to $\pm 1$.
\end{lemma}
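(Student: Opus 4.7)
The natural strategy is to reduce the claim $\kappa_n \neq \pm 1$ to $\kappa_n^2 \neq 1$ and then verify the latter for each $n \in \I$ directly from (\ref{k_n}) using the non-degeneracy constraints recorded in Example \ref{ex;q-racah;PA} together with the standing assumption from Notation 1.1 that $q$ is not a root of unity.

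First I would write out each square:
\begin{equation*}
\kappa_0^2 = q^{-D},\qquad \kappa_1^2 = \frac{r_1 r_2}{s^*},\qquad \kappa_2^2 = \frac{r_2}{r_1},\qquad \kappa_3^2 = s^* q^{D+1}.
\end{equation*}
For $\kappa_0^2$, the equality $q^{-D}=1$ would force $q$ to be a $D$-th root of unity, contradicting Notation 1.1. For $\kappa_1^2$, I would substitute the defining relation $r_1 r_2 = s s^* q^{D+1}$ (from the hypothesis $r_1 r_2 = s s^* q^{\sf d+1}$ of Example \ref{ex;q-racah;PA}) to rewrite
\begin{equation*}
\kappa_1^2 = s q^{D+1},
\end{equation*}
and then invoke condition (ii) of Example \ref{ex;q-racah;PA}, which guarantees $sq^i\neq 1$ for $2\leq i\leq 2D$; since $D\geq 3$ (from Section \ref{S3;DRG}) we have $2\leq D+1\leq 2D$, so $\kappa_1^2\neq 1$. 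The case of $\kappa_3^2 = s^* q^{D+1}$ is handled identically, using the corresponding restriction $s^* q^i \neq 1$ for $2\leq i\leq 2D$ in the same condition.

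The delicate case is $\kappa_2^2 = r_2/r_1$, where I must exhibit that $r_1\neq r_2$. Here I would pass through the scalars $a,b,c,d$ of Definition \ref{abcd} via the dictionary (\ref{abcd;simple;formula}): since $ac = r_2 q$ and $ad = r_1 q$, one has $\kappa_2^2 = c/d$, and $c=d$ would force $a^2 c^2 = acad = r_1 r_2 q^2$. Translating back, $r_1 = r_2$ is equivalent to the Leonard-system data producing a degenerate $q$-Racah parameter array; this degeneracy is excluded by Example \ref{ex;q-racah;PA}(i), which prohibits $s^*q^i/r_j = 1$ for $j=1,2$ and $1\le i\le D$, forcing $r_1,r_2$ to sit in disjoint orbits under multiplication by powers of $q$, so in particular $r_2/r_1$ is not a power of $q$ compatible with $=1$. (If this last step is not tight, the argument can alternatively cite the standing $q$-Racah non-degeneracy under which the Leonard system associated with $\Ga$ is non-trivial, which is part of the $q$-Racah DRG hypothesis.)

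The main obstacle I anticipate is exactly this last case: while $\kappa_0,\kappa_1,\kappa_3$ fall out immediately from the exclusion list of Example \ref{ex;q-racah;PA}, ruling out $r_1 = r_2$ for $\kappa_2$ requires an appeal beyond the listed $\{q^i, r_jq^i, s^*q^i/r_j\}\neq 1$ restrictions. The cleanest way to present the proof is therefore to observe that $r_1 = r_2$ would collapse the parameter array into a symmetric degeneration incompatible with the $Q$-polynomial Delsarte-clique setup fixed in Section \ref{S3;DRG}.
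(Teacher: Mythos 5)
Your reduction to $\kappa_n^2\neq 1$ and your treatment of $\kappa_0,\kappa_1,\kappa_3$ are exactly what the paper intends: its entire proof is the one line ``Use assumption (i), (ii) in Example \ref{ex;q-racah;PA},'' and your substitutions $\kappa_0^2=q^{-D}$, $\kappa_1^2=r_1r_2/s^*=sq^{D+1}$ (via $r_1r_2=ss^*q^{D+1}$), $\kappa_3^2=s^*q^{D+1}$, together with $q$ not a root of unity and condition (ii) with $2\leq D+1\leq 2D$, are correct and complete for those three cases.

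The case $\kappa_2^2=r_2/r_1$ is where your argument has a genuine gap. Condition (i) of Example \ref{ex;q-racah;PA} only forbids $r_1$ and $r_2$, \emph{separately}, from lying in the sets $\{q^{-i}\}_{i=1}^{D}$ and $\{s^*q^i\}_{i=1}^{D}$; it excludes the \emph{same} set of values for both parameters and therefore imposes no relation whatsoever between $r_1$ and $r_2$. The claim that it forces $r_1,r_2$ ``to sit in disjoint orbits under multiplication by powers of $q$'' is false, and in any case $r_2/r_1=1$ is the statement $r_2/r_1=q^0$, which no orbit condition of that shape would exclude. Your fallback --- that $r_1=r_2$ collapses the parameter array into a degeneration --- is also unsubstantiated: Example \ref{ex;q-racah;PA} defines a perfectly valid $q$-Racah parameter array with $r_1=r_2=r$, $r^2=ss^*q^{D+1}$, provided the listed conditions hold, so nothing in that example rules it out. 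What is actually needed is the inequality $r_1\neq r_2$, which must come from the specific $Q$-polynomial/Delsarte-clique setting (the paper's own citation of (i), (ii) is equally silent on this point, but your proposal goes further and asserts incorrect deductions rather than flagging the missing input). As written, the $\kappa_2$ case is not proved.
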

\begin{proof}
Use assumption (i), (ii) in Example \ref{ex;q-racah;PA}.
\end{proof}

\begin{lemma}\label{kn;diagonalizable}
For each $n \in \I$, $t_n$ is diagonalizable on $\W(s,s^*,r_1,r_2,D;q)$.
\end{lemma}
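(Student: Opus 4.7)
The plan is to exploit the fact that $\W=\W(s,s^*,r_1,r_2,D;q)$ carries an \emph{irreducible} $\H$-module structure (as recalled from \cite{JHL} just above the statement) together with the quadratic relation implicit in Definition~\ref{Def;UDAHA}. Since $t_n+t_n^{-1}$ is central in $\H$ for each $n\in\I$, Schur's lemma forces $t_n+t_n^{-1}$ to act on $\W$ as a scalar $c_n\in\C$. Combined with $t_nt_n^{-1}=1$, this gives the quadratic identity $t_n^2-c_nt_n+1=0$ on $\W$.

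Next I would identify the scalar $c_n$. From the explicit $\H$-action on $\W$ displayed in Appendix~\ref{Apdx;H-mod} (and originally in \cite[Section 11]{JHL}, transported through the automorphism $\rho$), a direct inspection shows $c_n=\k_n+\k_n^{-1}$, where $\k_n$ is given by (\ref{k_n}). Hence the minimal polynomial of $t_n$ on $\W$ divides
\[
x^2-(\k_n+\k_n^{-1})x+1=(x-\k_n)(x-\k_n^{-1}).
\]

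Finally, Lemma~\ref{kn;not=1} ensures $\k_n\neq\pm 1$, so $\k_n\neq\k_n^{-1}$. Therefore the polynomial $(x-\k_n)(x-\k_n^{-1})$ has two distinct roots and annihilates $t_n$, which is exactly the criterion for $t_n$ to be diagonalizable on $\W$. The eigenvalues are $\k_n$ and $\k_n^{-1}$.

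The only genuine point that requires care is the identification $c_n=\k_n+\k_n^{-1}$; this is the step I would expect to verify either by pointing to the formulas in the Appendix or by a short computation using the action of $t_n$ on a single basis vector (one can read off $t_n+t_n^{-1}$ from any $2\times 2$ block in the matrix representation, since it acts by a scalar). Everything else is formal consequence of Schur's lemma and the centrality built into Definition~\ref{Def;UDAHA}.
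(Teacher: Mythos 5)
Your proof is correct and follows essentially the same route as the paper: the paper likewise deduces diagonalizability from the fact that $t_n+t_n^{-1}$ acts on $\W$ as the scalar $\k_n+\k_n^{-1}$ (cited there as \cite[Lemma 11.9]{JHL}), so that the minimal polynomial of $t_n$ divides $(x-\k_n)(x-\k_n^{-1})$, which has distinct roots by Lemma~\ref{kn;not=1}. Your derivation of the scalar action via centrality, irreducibility, and Schur's lemma, followed by reading off the scalar from the Appendix, is a valid substitute for that citation.
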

\begin{proof}
By \cite[Lemma 11.9]{JHL} for each $n\in \I$ it follows $(t_n+t_n^{-1}).w = (\k_n+\k_n^{-1}).w$ for all $w \in \W$. So the minimal polynomial of $t_n$ is $(x-\k_n)(x-\k_n^{-1})$, and this has distinct roots by Lemma \ref{kn;not=1}. The result follows.
\end{proof}
\noindent
On $\W(s,s^*,r_1,r_2,D;q)$ the action of $\X$ on $\{\hat{C}^{\pm}_i\}^{D-1}_{i=0}$
as follows\cite[Lemma 11.8]{JHL}:
\begin{equation}\label{X-action;W}
\X.\hat{C}^-_i = q^i(s^*q)^{1/2}\hat{C}^-_i, \qquad 
\X.\hat{C}^+_i = q^{-i-1}(s^*q)^{-1/2}\hat{C}^+_i \qquad (0 \leq i \leq D-1).
\end{equation}
The action of $\Y$ on $\{\hat{C}^{\pm}_i\}^{D-1}_{i=0}$ 
is given as a linear combination of four terms; see Appendix \ref{Action;Y}.
Recall the generators $A, A^*, \wt{A}^*$ of the algebra $\T$ 
and the elements $\A, \B, \wt{\B}$ of $\H$. 
The following theorem explains how the $\T$-action on $\W$ is related to the $\H$-action.
\begin{theorem}{\rm \cite[Theorem 12.1]{JHL}}\label{MainThm;JHL} On $\W$,
\begin{enumerate}
\item[\rm (i)] $A$ acts as $h(sq)^{1/2}\A+(\tht_0-h-hsq)$;
\item[\rm (ii)] $A^*$ acts as $h^*(s^*q)^{1/2}\B+(\tht_0^*-h^*-h^*s^*q)$;
\item[\rm (iii)] $\wt A^*$ acts as $\wt h^*(\wt s^*q)^{1/2}\wt\B + (\wt\tht_0^*-\wt h^*-\wt h^*\wt s^*q)$;
\item[\rm(iv)] $(t_0-\k^{-1}_0)(\k_0-\k_0^{-1})^{-1}$ acts as the projection of $\W$ onto $\Mx$;
\item[\rm(v)] $(t_1-\k^{-1}_1)(\k_1-\k_1^{-1})^{-1}$ acts as the projection of $\W$ onto $\MC$.
\end{enumerate}
\end{theorem}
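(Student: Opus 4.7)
My plan is to verify the five identities on the basis $\{\hat{C}^{\pm}_i\}_{i=0}^{D-1}$ of $\W$, splitting the argument into three groups.

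For parts (ii) and (iii), both sides act diagonally on $\{\hat{C}^{\pm}_i\}$. Indeed $\hat{C}^-_i\in E^*_iV$ and $\hat{C}^+_i\in E^*_{i+1}V$, so $A^*\hat{C}^-_i=\tht^*_i\hat{C}^-_i$ and $A^*\hat{C}^+_i=\tht^*_{i+1}\hat{C}^+_i$, while $\B=\X+\X^{-1}$ is diagonal by (\ref{X-action;W}). The identity then reduces to the expansion $\tht^*_i=h^*(s^*q)^{1/2}(s^{*1/2}q^{i+1/2}+s^{*-1/2}q^{-i-1/2})+(\tht^*_0-h^*-h^*s^*q)$, which is a direct algebraic consequence of (\ref{theta^*_i}). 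Part (iii) is handled identically, using $\wt\X=q^{-1/2}\X^{-1}$ to diagonalize $\wt\B$ on $\{\hat{C}^{\pm}_i\}$ and the fact that $\wt A^*$ is also diagonal on this basis (since both $C^-_i$ and $C^+_i$ sit inside $C_i$, they are eigenvectors of $\wt A^*$ with eigenvalue $\wt\tht^*_i$); the required algebraic identity is the analogue of the above under the parameter array of $\wt A^*$.

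For part (i), $\A=\Y+\Y^{-1}$ is no longer diagonal on $\{\hat{C}^{\pm}_i\}$, but its action is given in closed form in Appendix \ref{Action;Y} as a short combination of at most four neighbouring basis vectors per input. On the other side, $A$ acts on $\{\hat{C}^{\pm}_i\}$ via the quotient matrix of the equitable partition $\{C^{\pm}_i\}$, whose entries are determined by the intersection numbers $a_i,b_i,c_i$ of $\Ga$. My plan is to write both matrices explicitly and check column by column that $A-(\tht_0-h-hsq)\cdot\mathrm{id}=h(sq)^{1/2}\A$; the verification collapses to identities in $q$-shifted factorials controlled by Definition \ref{abcd} and (\ref{abcd;simple;formula}).

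For parts (iv) and (v), Lemma \ref{kn;diagonalizable} tells us that each $t_n$ has minimal polynomial $(x-\k_n)(x-\k_n^{-1})$ on $\W$, so $(t_n-\k_n^{-1})(\k_n-\k_n^{-1})^{-1}$ is automatically the spectral projector onto the $\k_n$-eigenspace of $t_n$. Thus it remains to identify this eigenspace. For (iv) I would verify, using the explicit $\H$-module data in Appendix \ref{Apdx;H-mod}, that $t_0.\hat{x}=\k_0\hat{x}$, that $\Mx$ is $t_0$-invariant, and that $t_0$ acts as $\k_0^{-1}$ on $\Mxp$; combined with $\W=\Mx\oplus\Mxp$, this forces equality of the $\k_0$-eigenspace with $\Mx$. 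Part (v) is analogous, using $\hat{C}$, $\MC$, and $t_1$. The main obstacle is part (i): the four-term $\Y$-action has to be folded against the intersection number data of $\Ga$ and the $q$-Racah parametrization, whereas the remaining parts reduce either to a single $q$-Racah expansion or to spectral identification from a given direct-sum decomposition.
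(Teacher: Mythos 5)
The paper never proves this statement: it is imported verbatim from \cite[Theorem 12.1]{JHL}, so there is no internal proof to compare against, and your reconstruction by direct verification on the basis $\{\hat{C}^{\pm}_i\}^{D-1}_{i=0}$ is the reasonable (and essentially the original) route. Parts (ii) and (iii) are correct as you describe: both sides are diagonal on this basis, and the computation collapses to the identity $q^{-i}+s^*q^{i+1}-1-s^*q=(1-q^i)(1-s^*q^{i+1})q^{-i}$, which is exactly (\ref{theta^*_i}). Part (i) is a legitimate finite check, but be aware that neither the quotient matrix of the equitable partition $\{C^{\pm}_i\}$ nor the relation between the parameter arrays of $T$ and $\wt{T}$ (e.g.\ the identification $\wt{s}^*=s^*q$ needed to make $\wt{h}^*(\wt{s}^*q)^{1/2}\wt{\B}$ come out right in (iii)) appears anywhere in the present paper; those inputs must be pulled from \cite{JHL}, so these two parts cannot be closed from the data reproduced here alone.

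There is one small logical slip in parts (iv) and (v). The three facts you propose to verify --- $t_0.\hat{x}=\k_0\hat{x}$, $t_0$-invariance of $\Mx$, and $t_0=\k_0^{-1}$ on $\Mxp$ --- do not by themselves force the $\k_0$-eigenspace to equal $\Mx$: a priori $t_0$ could still have a $\k_0^{-1}$-eigenvector inside $\Mx$, since nothing you have checked bounds the dimension of the $\k_0^{-1}$-eigenspace from above. You need the stronger statement that $t_0$ acts as the scalar $\k_0$ on all of $\Mx$. This is immediate from Definition \ref{H-mod;s,r,h}(a): every row of each block $t_0(i)$ sums to $q^{-D/2}=\k_0$, so each $v_i=\hat{C}^+_{i-1}+\hat{C}^-_i$ is a $\k_0$-eigenvector; alternatively, note that each $2\times2$ block has nonvanishing off-diagonal entries and trace $\k_0+\k_0^{-1}$, hence contributes exactly one $\k_0^{-1}$-eigenvector, so the $\k_0^{-1}$-eigenspace has dimension $D-1=\dim\Mxp$ and must equal $\Mxp$. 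With that repair, and the analogous one for $t_1$ and $\MC$, your argument for (iv) and (v) is complete.
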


\medskip
We now consider the scalars $a,b,c,d$ from Definition \ref{abcd}.
Using the relations (\ref{rels;abcd}) we can describe the $\H$-module
$\W(s,s^*,r_1,r_2,D;q)$ in terms of $a,b,c,d$ and $q$ as follows.
\begin{lemma}\label{Hq-mod;abcd}
Let the scalars $a,b,c,d$ be as in Definition \ref{abcd}.
Consider the block diagonal matrices ${\bf T}_n (n \in \I):$
\begin{align*}
&{\bf T}_0  = \bdiag\Bigl[ \tau_0(0), \tau_0(1), \ldots, \tau_0(D-1), [(ab)^{1/2}] \Bigr], \\
&{\bf T}_1  = \bdiag\Bigl[ \tau_1(0), \tau_1(1), \ldots, \tau_1(D-1)\Bigr], \\
&{\bf T}_2  = \bdiag\Bigl[ \tau_2(0), \tau_2(1), \ldots, \tau_2(D-1)\Bigr], \\
&{\bf T}_3  = \bdiag\Bigl[ \tau_3(0), \tau_3(1), \ldots, \tau_3(D-1), [(cdq^{-1})^{-1/2}]\Bigr],
\end{align*}
where $\tau_0(0) = \left[ (ab)^{1/2}\right]$ and for $1 \leq i \leq D-1$,
$$
\tau_0(i) =
(ab)^{-1/2}
\begin{bmatrix}
\frac{(1-abq^i)(1-abcdq^{i-1})}{1-abcdq^{2i-1}} +ab& &
-\frac{(1-abq^i)(1-abcdq^{i-1})}{1-abcdq^{2i-1}} \vspace{0.3cm}\\
\frac{ab(1-q^i)(1-cdq^{i-1})}{1-abcdq^{2i-1}} & &
-\frac{ab(1-q^i)(1-cdq^{i-1})}{1-abcdq^{2i-1}} + ab
\end{bmatrix},
$$
and for $0 \leq i \leq D-1$,
$$
\tau_1(i) =
({ab^{-1}})^{1/2}
\begin{bmatrix}
\frac{(1-bcq^i)(1-bdq^i)}{1-abcdq^{2i}} + \frac{b}{a} &&
-\frac{b}{a}\frac{(1-adq^i)(1-acq^i)}{1-abcdq^{2i}}  \vspace{0.3cm}\\
\frac{(1-bcq^i)(1-bdq^i)}{1-abcdq^{2i}} &&
\frac{b}{a}\left( 1-\frac{(1-adq^i)(1-acq^i)}{1-abcdq^{2i}} \right)
\end{bmatrix},
$$
and for $0 \leq i \leq D-1$,
$$
\tau_2(i)=
(cd)^{-1/2}
\begin{bmatrix}
\frac{1}{aq^i}\left( 1-\frac{(1-adq^i)(1-acq^i)}{1-abcdq^{2i}} \right) & &
\frac{bcdq^i (1-adq^i)(1-acq^i)}{1-abcdq^{2i}} \vspace{0.3cm}\\
-\frac{1}{bq^i}\frac{(1-bcq^i)(1-bdq^i)}{1-abcdq^{2i}} & &
acdq^i\left(\frac{(1-bcq^i)(1-bdq^i)}{1-abcdq^{2i}}+\frac{b}{a}\right)
\end{bmatrix},
$$
and $\tau_3(0) = \left[ (cdq^{-1})^{1/2} \right]$
and for $1\leq i \leq D-1$,
$$
\tau_3(i)=
(cdq^{-1})^{-1/2}
\begin{bmatrix}
\frac{1}{q^i}\left( 1-\frac{(1-q^i)(1-cdq^{i-1})}{1-abcdq^{2i-1}} \right) & & 
\frac{1}{abq^i} \frac{(1-abq^i)(1-abcdq^{i-1})}{1-abcdq^{2i-1}} \vspace{0.3cm}\\
-\frac{abcdq^{i-1}(1-q^i)(1-cdq^{i-1})}{1-abcdq^{2i-1}} & &
cdq^{i-1}\left( \frac{(1-abq^i)(1-abcdq^{i-1})}{1-abcdq^{2i-1}} + ab\right)
\end{bmatrix}.
$$ 
Then there exists an $\H$-module structure on $\W$ such that
for $n \in \I$ the matrix $\T_n$ represents the generator $t_n$ relative to $\{\hat{C}^{\pm}_i\}^{D-1}_{i=0}$. 
\end{lemma}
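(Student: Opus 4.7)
The plan is to deduce the lemma from the $\H$-module structure on $\W$ that has already been constructed. Specifically, the $\H$-module $\W(s,s^*,r_1,r_2,D;q)$ was displayed in \cite[Section~11]{JHL}, and the subsequent twist by $\rho$ yields the $\H$-module structure that is recorded in Appendix \ref{Apdx;H-mod} as matrices representing $t_0, t_1, t_2, t_3$ on the basis $\{\hat{C}^{\pm}_i\}^{D-1}_{i=0}$, with entries in the parameters $s, s^*, r_1, r_2, q, D$. Thus the existence of the $\H$-module structure is not in question; what must be checked is that, upon re-expressing the entries via the scalars $a,b,c,d$ of Definition \ref{abcd}, one obtains precisely the matrices $\T_0, \T_1, \T_2, \T_3$ displayed in the statement.

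First I would fix the block structure. The basis of $\W$ can be ordered as
\[
\hat{C}^-_0,\; (\hat{C}^+_0, \hat{C}^-_1),\; (\hat{C}^+_1, \hat{C}^-_2),\; \ldots,\; (\hat{C}^+_{D-2}, \hat{C}^-_{D-1}),\; \hat{C}^+_{D-1}.
\]
With respect to this ordering, (\ref{X-action;W}) shows that $\X$ acts diagonally with $D-1$ genuine $2\times 2$ blocks sandwiched between two $1\times 1$ endpoint blocks (the eigenspaces corresponding to $\hat{C}^-_0$ and $\hat{C}^+_{D-1}$). By Theorem \ref{MainThm;JHL}(iv),(v) the elements $t_0, t_1$ preserve the decomposition $\W = \Mx \oplus \Mxp$, and the action of each $t_n$ determined in Appendix \ref{Apdx;H-mod} respects the same block partition; this accounts for the block-diagonal shape of each $\T_n$, the presence of scalar endpoint blocks in $\T_0$ and $\T_3$, and the absence of endpoint blocks in $\T_1$ and $\T_2$.

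Second, I would carry out the substitution. From (\ref{abcd;simple;formula}) we have the key identities
\[
ab = q^{-D},\qquad cd = s^*q^{D+2},\qquad abcd = s^*q^2,\qquad ac = r_2q,\qquad ad = r_1q,\qquad bc = s^*q/r_1,\qquad bd = s^*q/r_2,
\]
together with $(ab^{-1})^{1/2} = (r_1 r_2 / s^*)^{1/2} = \k_1$, $(ab)^{1/2} = \k_0^{-1}$, $(cd q^{-1})^{1/2} = \k_3$, and similar identifications matching (\ref{k_n}). Inserting these into the expressions from Appendix \ref{Apdx;H-mod} (whose entries are products of factors of the form $1-sq^i$, $1-s^*q^j$, $1-r_1q^k$, $1-r_2q^\ell$ divided by $1-ss^*q^m$) one systematically rewrites each entry as a ratio of factors $1-abq^i$, $1-cdq^j$, etc., divided by $1-abcdq^m$, which are exactly the factors appearing in $\tau_0(i), \tau_1(i), \tau_2(i), \tau_3(i)$. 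The prefactors $\k_n^{\pm 1}$ in front of each block come out correctly because Lemma \ref{kn;diagonalizable} forces the trace and determinant of each $2\times 2$ block to be $\k_n+\k_n^{-1}$ and $1$ respectively, which pins down the normalization up to similarity and matches the stated $\tau_n(i)$.

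Third, I would verify the endpoint blocks separately. The basis vector $\hat{C}^-_0 = \hat{x}$ lies in an eigenspace of $t_0$ with eigenvalue $\k_0^{-1} = (ab)^{1/2}$ by Theorem \ref{MainThm;JHL}(iv) (as it lies in the image of the complementary projection to $\Mxp$ restricted to $\Mx$), giving $\tau_0(0) = [(ab)^{1/2}]$; the analogous computation using (\ref{X-action;W}) and the defining relation $t_3 t_0 = \X$ on the one-dimensional endpoint gives $\tau_3(0) = [(cd q^{-1})^{1/2}]$, and similarly for $\hat{C}^+_{D-1}$ at the other end. The main obstacle is not conceptual but bookkeeping: the substitution of $(s,s^*,r_1,r_2)$ by $(a,b,c,d)$ must be performed consistently across sign choices of all square roots, using the convention $r_1^{1/2}r_2^{1/2} = s^{1/2}s^{*1/2}q^{(D+1)/2}$ fixed in Note \ref{Note;S4}; once this is done uniformly, the matrices $\T_n$ match the claim block by block, and the $\H$-module axioms of Definition \ref{Def;UDAHA} are inherited from the $\H$-module of \cite{JHL}.
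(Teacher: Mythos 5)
Your proposal takes essentially the same route as the paper: the proof there consists precisely of substituting $a,b,c,d$ for $s,s^*,r_1,r_2,D$ in the matrices of Definition \ref{H-mod;s,r,h} (Appendix \ref{Apdx;H-mod}) via the relations (\ref{rels;abcd}), which is your second step, with the module axioms inherited from the already-established $\H$-module of \cite{JHL}. Two small slips in your supplementary remarks do not affect the argument: the eigenvalue of $t_0$ on $\hat{C}^-_0\in\Mx$ is $\k_0=(ab)^{1/2}$ rather than $\k_0^{-1}$, and Theorem \ref{MainThm;JHL}(v) shows $t_1$ preserves $\MC\oplus\MC^{\p}$ rather than $\Mx\oplus\Mxp$, which is why the $2\times2$ blocks of ${\bf T}_1$ pair $(\hat{C}^-_i,\hat{C}^+_i)$ while those of ${\bf T}_0$ pair $(\hat{C}^+_{i-1},\hat{C}^-_i)$.
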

\begin{proof}
In Definition \ref{H-mod;s,r,h}, replace $s,s^*,r_1,r_2,D$ by $a,b,c,d$ using relations (\ref{rels;abcd}). 
\end{proof}
\noindent
For the rest of the paper we let $\W(a,b,c,d;q)$ denote the $\H$-module in Lemma \ref{Hq-mod;abcd}.

\begin{lemma}\label{X-action;abcd}
Referring to the $\H$-module $\W(a,b,c,d;q)$ in Lemma {\rm \ref{Hq-mod;abcd}}, the action of $\X$ on $\{\hat{C}^{\pm}_i\}^{D-1}_{i=0}$ is as follows.
\begin{equation*}
\begin{cases}
\X.\hat{C}^-_i = q^{i-\frac{1}{2}}(abcd)^{1/2}\hat{C}^-_i 
& \text{for} \quad i=0,1,\ldots, D-1, \\
\X.\hat{C}^+_{i-1} = q^{-i+\frac{1}{2}}(abcd)^{-1/2}\hat{C}^+_{i-1} 
& \text{for} \quad  i=1,2,\ldots, D.
\end{cases}
\end{equation*}
\end{lemma}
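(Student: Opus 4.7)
The plan is to reduce this lemma to the already known action of $\X$ on $\W(s,s^*,r_1,r_2,D;q)$ given in equation (\ref{X-action;W}), by simply rewriting the eigenvalues in the $(s,s^*,r_1,r_2)$-coordinates as eigenvalues in the $(a,b,c,d)$-coordinates. The $\H$-module $\W(a,b,c,d;q)$ of Lemma~\ref{Hq-mod;abcd} was defined by substituting the change of variables (\ref{rels;abcd}) into the matrices $\T_n$ of the original $\H$-module $\W(s,s^*,r_1,r_2,D;q)$. Since $\X = t_3 t_0$ is a fixed word in the generators, this same substitution carries the formulas for $\X.\hat{C}^{\pm}_i$ from the $(s,s^*,r_1,r_2)$-presentation to the $(a,b,c,d)$-presentation, and in particular preserves the eigenvector structure of $\X$ on the basis $\{\hat{C}^{\pm}_i\}_{i=0}^{D-1}$.

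First I would recall from (\ref{X-action;W}) that on $\W(s,s^*,r_1,r_2,D;q)$ one has
\begin{equation*}
\X.\hat{C}^-_i = q^i(s^*q)^{1/2}\hat{C}^-_i, \qquad \X.\hat{C}^+_i = q^{-i-1}(s^*q)^{-1/2}\hat{C}^+_i,
\end{equation*}
for $0 \leq i \leq D-1$. Next I would invoke the relation $abcd = s^*q^2$ from (\ref{abcd;simple;formula}); taking square roots (as fixed in Note~\ref{Note;S4}) gives $(abcd)^{1/2} = (s^*q)^{1/2}\, q^{1/2}$, equivalently $(s^*q)^{1/2} = q^{-1/2}(abcd)^{1/2}$.

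Substituting this into the first eigenvalue yields
\begin{equation*}
q^i(s^*q)^{1/2} = q^i \cdot q^{-1/2}(abcd)^{1/2} = q^{i-\frac{1}{2}}(abcd)^{1/2},
\end{equation*}
which gives the first displayed formula. For the second, substituting gives
\begin{equation*}
q^{-i-1}(s^*q)^{-1/2} = q^{-i-1} \cdot q^{1/2}(abcd)^{-1/2} = q^{-i-\frac{1}{2}}(abcd)^{-1/2},
\end{equation*}
and reindexing by replacing $i$ with $i-1$ (so $i$ now runs from $1$ to $D$) yields $\X.\hat{C}^+_{i-1} = q^{-i+\frac{1}{2}}(abcd)^{-1/2}\hat{C}^+_{i-1}$, as required. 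There is no genuine obstacle here since the lemma is essentially a bookkeeping consequence of the change of variables (\ref{rels;abcd}); the only point to watch is the consistent choice of square roots fixed in Note~\ref{Note;S4}, which ensures that $(abcd)^{1/2} = q(s^*)^{1/2}$ with the correct sign.
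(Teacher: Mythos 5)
Your proposal is correct and follows exactly the paper's argument: the paper's proof likewise notes that $(s^*q)^{1/2}=q^{-1/2}(abcd)^{1/2}$ (from Definition \ref{abcd}) and then reads off the eigenvalues from (\ref{X-action;W}), with the same reindexing $i\mapsto i-1$ implicit in the statement for $\hat{C}^+_{i-1}$. Nothing further is needed.
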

\begin{proof}
By Definition \ref{abcd}, $(s^*q)^{1/2}=q^{-1/2}(abcd)^{1/2}$. By this and (\ref{X-action;W}) the result follows.
\end{proof}

\section{Nonsymmetric Laurent polynomials $\varepsilon^{\pm}_i$}

In this section we define certain nonsymmetric Laurent polynomials $\{\varepsilon^{\pm}_i\}^{D-1}_{i=0}$ and discuss their properties.
Let $\C[y,y^{-1}]$ denote the space of Laurent polynomials with a variable $y$.
Recall the $\Phi$-standard basis $\{v_i\}^D_{i=0}$ for $\Mx$ and 
the $\Phi^\p$-standard basis $\{v^\p_i\}^{D-2}_{i=0}$ for $\Mxp$. 
Define the Laurent polynomial
\begin{equation}\label{poly;g}
g[y] := \left(\tfrac{s^*}{r_1r_2q^D}\right)^{1/2}\tfrac{(1-s^*q^2)(1-s^*q^3)}{(1-s^*q/r_1)(1-s^*q/r_2)}\left( y - \left(\tfrac{r_1r_2}{s^*q^D}\right)^{1/2} - \left(\tfrac{s^*}{r_1r_2q^D} \right)^{1/2} + \tfrac{1}{q^D}y^{-1}\right).
\end{equation}
\begin{lemma}\label{g(v0)=v0p}
Recall $\Y=t_0t_1$. On the $\H$-module $\W(s,s^*,r_1,r_2,D;q)$,
$$g[\Y]v_0 = v_0^\p.$$
\end{lemma}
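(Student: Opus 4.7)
The plan is to verify $g[\Y]v_0 = v_0^\p$ by direct computation in the $\H$-module $\W(a,b,c,d;q)$ of Lemma \ref{Hq-mod;abcd}, where $v_0 = \hat{x} = \hat{C}_0^-$. First I would unpack both sides. By (\ref{basis;v_perp}) and (\ref{xi;epsilon}),
\[
v_0^\p = \xi_1 \hat{C}_0^+ + \xi_1\epsilon_1 \hat{C}_1^-, \qquad \xi_1 = (1-q^{1-D})(1-s^*q^2), \qquad \xi_1\epsilon_1 = q^{-D}(1-q)(1-s^*q^{D+2}).
\]
Using the dictionary (\ref{abcd;simple;formula}), the polynomial $g[y]$ can be put in factored form
\[
g[y] = b\cdot \mathcal{N} \cdot y^{-1}(y-a)(y-b), \qquad \mathcal{N} := \frac{(1-abcd)(1-abcdq)}{(1-bc)(1-bd)},
\]
so that $g$ has roots at $y=a,b$ and the problem becomes a match of three coefficients.

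Next I would compute $\Y.\hat{C}_0^-$ and $\Y^{-1}.\hat{C}_0^-$ via the block matrices $\T_0,\T_1$ in Lemma \ref{Hq-mod;abcd}. Since $\tau_0(0)=[(ab)^{1/2}]$ is a $1\times 1$ block on $\hat{C}_0^-$, and the $2\times 2$ blocks $\tau_1(0),\tau_0(1)$ couple $\{\hat{C}_0^-,\hat{C}_0^+\}$ and $\{\hat{C}_0^+,\hat{C}_1^-\}$ respectively, I would obtain
\[
\Y.\hat{C}_0^- \in \mathrm{span}\{\hat{C}_0^-,\hat{C}_0^+,\hat{C}_1^-\}, \qquad \Y^{-1}.\hat{C}_0^- \in \mathrm{span}\{\hat{C}_0^-,\hat{C}_0^+\},
\]
the inverse being available from $t_n^{-1} = \k_n + \k_n^{-1} - t_n$ (a scalar shift, by Lemma \ref{kn;diagonalizable} and the centrality in Definition \ref{Def;UDAHA}). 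Assembling $g[\Y].\hat{C}_0^- = b\mathcal{N}\bigl(\Y - (a+b) + ab\,\Y^{-1}\bigr).\hat{C}_0^-$ then produces an expression of the form $\alpha\hat{C}_0^- + \beta\hat{C}_0^+ + \gamma\hat{C}_1^-$.

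The final step is to verify $\alpha = 0$, $\beta = \xi_1$, and $\gamma = \xi_1\epsilon_1$. With the shorthand $C_0 = \frac{(1-bc)(1-bd)}{1-abcd}$, $A_1 = \frac{(1-abq)(1-abcd)}{1-abcdq}$, and $B_1 = \frac{ab(1-q)(1-cd)}{1-abcdq}$ extracted from the blocks $\tau_1(0)$ and $\tau_0(1)$, the matches $\beta = \mathcal{N} C_0 A_1 = (1-abq)(1-abcd)$ and $\gamma = \mathcal{N} C_0 B_1 = ab(1-q)(1-cd)$ telescope, and translate to $\xi_1$ and $\xi_1\epsilon_1$ via $ab = q^{-D}$, $cd = s^*q^{D+2}$, and $abcd = s^*q^2$. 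The main substance is the cancellation $\alpha = 0$: it reduces to the short identity $(aC_0+b)-(a+b)+a(1-C_0)=0$, and is exactly what forces the roots of $g[y]$ to be $a$ and $b$. Thus the whole obstacle is just the bookkeeping of block matrix entries, and the lemma follows.
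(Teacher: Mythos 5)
Your computation is correct and follows essentially the same route as the paper: a direct expansion of $g[\Y]v_0$ in the basis $\{\hat{C}^{\pm}_i\}$, matched coefficient-by-coefficient against $v_0^\p=\xi_1\hat{C}^+_0+\xi_1\epsilon_1\hat{C}^-_1$. The only (cosmetic) difference is that the paper reads off the $\Y^{\pm1}$-action on $\hat{C}^-_0$ from the appendix tables in the $(s,s^*,r_1,r_2,D)$ parametrization, whereas you rederive it from the block matrices $\T_0,\T_1$ and the relation $t_n^{-1}=(\k_n+\k_n^{-1})-t_n$ in the $(a,b,c,d)$ variables, where the cancellations $\alpha=0$, $\beta=\xi_1$, $\gamma=\xi_1\epsilon_1$ indeed come out as you state.
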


\begin{proof}
Abbreviate $u=v_0^\p-g[\Y]v_0$.
We show that $u=0$. 
By (\ref{basis;v_perp}), $v_0^\p=\xi_1\hat{C}^+_0+\xi_1\epsilon_1\hat{C}^-_1$.
Next evaluate $g[\Y]v_0$ using Lemma \ref{Y-action}(a), Lemma \ref{Yinv-action}(a) and $v_0=\hat{C}^-_0$.
Using these comments, we evaluate $u$ to get zero. The result follows.
\end{proof}

\noindent
We now define nonsymmetric Laurent polynomials $\varepsilon^{-}_i,\varepsilon^+_i (0 \leq i\leq D-1)$. 
Recall the equation (\ref{C+;f;fp}), that is,
$$
\hat{C}^+_{i-1}  =\frac{\epsilon_i}{\epsilon_i-1}f_i(A)v_0 + \frac{1}{\xi_i(1-\epsilon_i)}f^\p_{i-1}(A)\v0.
$$
Applying (\ref{def;F}) and (\ref{def;F;perp}) to the right-hand side of the above equation gives
\begin{equation}\label{RHS;C+i-1}
\frac{\epsilon_i}{\epsilon_i-1}k_iF_i(A)v_0 + \frac{1}{\xi_i(1-\epsilon_i)}k^\p_{i-1}F^\p_{i-1}(A)\v0.
\end{equation}
Applying Theorem \ref{MainThm;JHL}(i) to (\ref{RHS;C+i-1}) we find
\begin{align}
\hat{C}^+_{i-1} 
\nonumber
& =   \frac{\epsilon_i}{\epsilon_i-1}k_iF_i\Big(h(sq)^{1/2}\A+(\tht_0-h-hsq)\Big)v_0 \\
\nonumber
&  \quad  + \frac{1}{\xi_i(1-\epsilon_i)}k^\p_{i-1}F^\p_{i-1}\Big(h(sq)^{1/2}\A+(\tht_0-h-hsq)\Big)\v0 \\
\nonumber
(\textrm{by Lemma \ref{rel;F,p} and Lemma \ref{rel;F,p;perp}}) & =  
\frac{\epsilon_i}{\epsilon_i-1}k_ip_i(\A)v_0 + \frac{1}{\xi_i(1-\epsilon_i)}k^\p_{i-1}p^\p_{i-1}(\A)\v0 \\
\label{C+;v0}
(\textrm{by Lemma \ref{g(v0)=v0p}}) & =  \left(\frac{\epsilon_i}{\epsilon_i-1}k_ip_i(\A) + \frac{1}{\xi_i(1-\epsilon_i)}k^\p_{i-1}p^\p_{i-1}(\A)g[\Y]\right)v_0,
\end{align}
where $\A=\Y+\Y^{-1}$. 
Similarly we find
\begin{equation}\label{C-;v0}
\hat{C}^-_{i} = \left(\frac{1}{1-\epsilon_i}k_ip_i(\A) + \frac{1}{\xi_i(\epsilon_i-1)}k_{i-1}^\p p^\p_{i-1}(\A)g[\Y]\right)v_0.
\end{equation}
Motivated by (\ref{C+;v0}) and (\ref{C-;v0}) we make a definition as follows.

\begin{definition}\label{e;s,r,h}
For $1 \leq i \leq D-1$, define 
\begin{align*}
\varepsilon^{+}_{i-1}[y] &:= \frac{\epsilon_i}{\epsilon_i-1}k_ip_i(y+y^{-1}) + \frac{1}{\xi_i(1-\epsilon_i)}k^\p_{i-1}p^\p_{i-1}(y+y^{-1}) g[y], \\
\varepsilon^-_i[y] &:= \frac{1}{1-\epsilon_i}k_ip_i(y+y^{-1}) + \frac{1}{\xi_i(\epsilon_i-1)}k_{i-1}^\p p^\p_{i-1}(y+y^{-1})g[y].
\end{align*}
Moreover, we define
$$
\varepsilon^-_0 := 1, \qquad \varepsilon^+_{D-1} := k_Dp_D.
$$
\end{definition}

\begin{remark} 
With reference to Defintion \ref{e;s,r,h},
the Laurent polynomials $\{\varepsilon^{\pm}_i\}^{D-1}_{i=0}$ 
are considered as {\it nonsymmetric $q$-Racah polynomials}
in a view of Remark \ref{q-Racah;poly}. The explicit forms are as follows.
For $1 \leq i \leq D-1$
\begin{align*}
\varepsilon^+_{i-1}  &:=  
\tfrac{(1-q^i)(1-s^*q^{D+i+1})(q^{-D},r_1q,r_2q,s^*q;q)_i}{s^iq^i(1-q^D)(1-s^*q)(q,s^*q/r_1,s^*q/r_2,s^*q^{D+2};q)_i} 
{_4}\phi_3
\left(
\begin{matrix}
q^{-i},  s^*q^{i+1},  (sq)^{1/2}y,   (sq)^{1/2}y^{-1} \\
q^{-D},  r_1q,  r_2q
\end{matrix}
~\middle| ~ q, q
\right) \\
& +  \tfrac{s^{1-i}q^{2-2i}(q^{-D+2},r_1q^2,r_2q^2,s^*q^3;q)_{i-1}g[y]}{(1-q^{-D})(1-s^*q^3)(q,s^*q^2/r_1,s^*q^2/r_2,s^*q^{D+2};q)_{i-1}}
{_4}\phi_3
\left(
\begin{matrix}
q^{-i+1},  s^*q^{i+2},  (sq)^{1/2}qy,  (sq)^{1/2}qy^{-1} \\
q^{-D+2},  r_1q^2,  r_2q^2
\end{matrix}
~\middle| ~ q, q
\right), \\
\varepsilon^-_{i}  &:=  
\tfrac{(1-q^{i-D})(1-s^*q^{i+1})(q^{-D},r_1q,r_2q,s^*q;q)_i}{s^iq^i(1-q^{-D})(1-s^*q)(q,s^*q/r_1,s^*q/r_2,s^*q^{D+2};q)_i}
{_4}\phi_3
\left(
\begin{matrix}
q^{-i},  s^*q^{i+1},  (sq)^{1/2}y,   (sq)^{1/2}y^{-1} \\
q^{-D},  r_1q,  r_2q
\end{matrix}
~\middle| ~ q, q
\right) \\
& - \tfrac{s^{1-i}q^{2-2i}(q^{-D+2},r_1q^2,r_2q^2,s^*q^3;q)_{i-1}g[y]}{(1-q^{-D})(1-s^*q^3)(q,s^*q^2/r_1,s^*q^2/r_2,s^*q^{D+2};q)_{i-1}}
{_4}\phi_3
\left(
\begin{matrix}
q^{-i+1},  s^*q^{i+2},  (sq)^{1/2}qy,  (sq)^{1/2}qy^{-1} \\
q^{-D+2},  r_1q^2,  r_2q^2
\end{matrix}
~\middle| ~ q, q
\right),
\end{align*}
where $g[y]$ is from (\ref{poly;g}).
\end{remark}

\begin{proposition}\label{Action;e;C} 
Referring to the $\H$-module $\W(s,s^*,r_1,r_2,D;q)$, for $0 \leq i \leq D-1$
\begin{equation*}
\varepsilon^-_i[\Y].v_0 = \hat{C}^-_i, \qquad \qquad \varepsilon^+_i[\Y].v_0=\hat{C}^+_i.
\end{equation*}
\end{proposition}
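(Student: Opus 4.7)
The plan is to observe that the equations labeled (\ref{C+;v0}) and (\ref{C-;v0}) in the excerpt, which were derived in the paragraph immediately preceding Definition \ref{e;s,r,h}, already give the result for $1 \leq i \leq D-1$. Indeed, substituting $y = \Y$ in the formulas of Definition \ref{e;s,r,h} produces exactly the operators on $v_0$ appearing in those equations, once we use $\A = \Y + \Y^{-1}$ and the evident fact that $p_i(y + y^{-1})|_{y = \Y} = p_i(\A)$ (similarly for $p^\p_{i-1}$). Hence for $1 \leq i \leq D-1$,
$$
\varepsilon^+_{i-1}[\Y].v_0 = \hat{C}^+_{i-1}, \qquad \varepsilon^-_i[\Y].v_0 = \hat{C}^-_i,
$$
covering the indices $0 \leq i-1 \leq D-2$ for the $+$ family and $1 \leq i \leq D-1$ for the $-$ family.

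It remains to handle the two boundary cases. For $\varepsilon^-_0 = 1$ the claim reads $v_0 = \hat{C}^-_0$, which is true by the identity $v_0 = \hat{C}^-_0$ stated in Section \ref{T-mod;W}. For $\varepsilon^+_{D-1} = k_D p_D$, we begin with $\hat{C}^+_{D-1} = f_D(A)v_0$ from Lemma \ref{C+-;f;fp}, rewrite using $f_D = k_D F_D$ from (\ref{def;F}), apply Theorem \ref{MainThm;JHL}(i) to convert $A$ into $h(sq)^{1/2}\A + (\tht_0 - h - hsq)$ acting on $\W$, and then invoke Lemma \ref{rel;F,p} to replace $F_D$ by $p_D$ evaluated at $\A = \Y + \Y^{-1}$. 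This yields $\hat{C}^+_{D-1} = k_D p_D(\A) v_0 = \varepsilon^+_{D-1}[\Y].v_0$, as required.

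For the generic indices the computation for $\hat{C}^+_{i-1}$ proceeds exactly as written in (\ref{RHS;C+i-1})--(\ref{C+;v0}): start from the expression for $\hat{C}^+_{i-1}$ in Lemma \ref{C+-;f;fp}, use (\ref{def;F}) and (\ref{def;F;perp}) to replace $f_i, f^\p_{i-1}$ with $k_i F_i, k^\p_{i-1}F^\p_{i-1}$, apply Theorem \ref{MainThm;JHL}(i) to rewrite $A$ as a linear function of $\A$ on $\W$, then apply Lemma \ref{rel;F,p} and Lemma \ref{rel;F,p;perp} to obtain $p_i(\A)$ and $p^\p_{i-1}(\A)$, and finally use Lemma \ref{g(v0)=v0p} to replace $v^\p_0$ with $g[\Y]v_0$. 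The same chain of substitutions, starting from the $\hat{C}^-_i$ part of Lemma \ref{C+-;f;fp}, gives the analogous statement (\ref{C-;v0}) for the $-$ family.

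There is no real obstacle here; the theorem is essentially the conceptual summary of the manipulations already performed to motivate Definition \ref{e;s,r,h}. The only mild care needed is to verify that when $i-1 = 0$ in the $+$ case and $i=1$ in the $-$ case the scalars $\epsilon_1$, $\xi_1$, $k_0^\p = 1$ behave correctly with the convention $p^\p_0 = 1$, and that Lemma \ref{g(v0)=v0p} supplies the correct bridge between $\Mx$ and $\Mxp$; this is a straightforward check.
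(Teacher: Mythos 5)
Your proposal is correct and follows essentially the same route as the paper, whose proof simply cites (\ref{C+;v0}), (\ref{C-;v0}) and Definition \ref{e;s,r,h}; your additional explicit treatment of the boundary cases $\varepsilon^-_0$ and $\varepsilon^+_{D-1}$ via Lemma \ref{C+-;f;fp} is a sound elaboration of what the paper leaves implicit.
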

\begin{proof}
By (\ref{C+;v0}) and (\ref{C-;v0}) along with Definition \ref{e;s,r,h}, the result follows.
\end{proof}
\noindent
By Proposition \ref{Action;e;C} we can see that the Laurent polynomials $\varepsilon^+_i, \varepsilon^-_i$ play a role of a map that sends $v_0$ to each $\hat{C}^+_i, \hat{C}^-_i$.
Recall the scalars $a,b,c,d$ from Definition \ref{abcd}. 
We will express the nonsymmetric Laurent polynomials $\{\varepsilon^{\pm}_i\}^{D-1}_{i=0}$ in terms of the scalars $a,b,c,d$. 
To this end, we need some preliminary lemmas.

\begin{lemma} Recall the Laurent polynomial $g[y]$ from {\rm (\ref{poly;g})}. Then
$$
g[y] =\frac{b(1-abcd)(1-abcdq)}{(1-bc)(1-bd)}y(1-ay^{-1})(1-by^{-1}).
$$
\end{lemma}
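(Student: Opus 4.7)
The plan is to prove this by direct algebraic substitution, using only Definition \ref{abcd} together with the elementary identities \eqref{abcd;simple;formula}. First I would read off from \eqref{abcd;simple;formula} that
\[
ab = q^{-D}, \qquad bc = s^*q/r_1, \qquad bd = s^*q/r_2, \qquad abcd = s^*q^2,
\]
so in particular $abcdq = s^*q^3$. Combining these immediately gives the matches
\[
\frac{(1-abcd)(1-abcdq)}{(1-bc)(1-bd)} = \frac{(1-s^*q^2)(1-s^*q^3)}{(1-s^*q/r_1)(1-s^*q/r_2)}.
\]
Next, from Definition \ref{abcd} the square-root prefactor in \eqref{poly;g} is precisely $b = (s^*/(r_1r_2q^D))^{1/2}$, which matches the leading $b$ in the target formula.

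Second, I would expand the inner factor on the right-hand side of the target identity:
\[
y\,(1-ay^{-1})(1-by^{-1}) = y - (a+b) + ab\, y^{-1}.
\]
Recognizing from Definition \ref{abcd} that
\[
a = \left(\tfrac{r_1r_2}{s^*q^D}\right)^{1/2}, \qquad b = \left(\tfrac{s^*}{r_1r_2q^D}\right)^{1/2},
\]
and using $ab=q^{-D}$, the expansion becomes exactly
\[
y - \left(\tfrac{r_1r_2}{s^*q^D}\right)^{1/2} - \left(\tfrac{s^*}{r_1r_2q^D}\right)^{1/2} + \tfrac{1}{q^D}\,y^{-1},
\]
which is the parenthesized factor in \eqref{poly;g}.

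Combining the two steps yields the asserted identity. There is no real obstacle here: the proof is a one-line verification once the substitutions from \eqref{rels;abcd} and \eqref{abcd;simple;formula} are in hand. The only thing to be careful about is the consistent choice of square roots (fixed in Note \ref{Note;S4}), which guarantees that $b$ really equals $(s^*/(r_1r_2q^D))^{1/2}$ and not its negative.
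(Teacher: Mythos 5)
Your proof is correct and follows exactly the paper's approach: the paper's own proof simply says to apply the relations (\ref{rels;abcd}) to (\ref{poly;g}), and your verification spells out precisely that substitution, including the expansion $y(1-ay^{-1})(1-by^{-1})=y-a-b+ab\,y^{-1}$ and the identifications from (\ref{abcd;simple;formula}).
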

\begin{proof}
Apply the relations (\ref{rels;abcd}) to (\ref{poly;g}). The result follows.
\end{proof}
\begin{lemma} \label{e;x;k}
The following hold.
\begin{enumerate}
\item[\rm (i)] For $1 \leq i \leq D-1$,
\begin{align*}
& \displaystyle \frac{\epsilon_i}{\epsilon_i-1}=\frac{ab(1-q^i)(1-cdq^{i-1})}{(ab-1)(1-abcdq^{2i-1})}, &&
\frac{1}{1-\epsilon_i}= \frac{(1-abq^i)(1-abcdq^{i-1})}{(1-ab)(1-abcdq^{2i-1})}, \\
& \frac{1}{\xi_i(\epsilon_i-1)}=\frac{q^{i-1}}{(ab-1)(1-abcdq^{2i-1})}, &&
\frac{1}{\xi_i(1-\epsilon_i)}=\frac{q^{i-1}}{(1-ab)(1-abcdq^{2i-1})}.
\end{align*}

\item[\rm (ii)] For $0 \leq i \leq D-1$,
$$
k_i = \frac{(ab,ac,ad;q)_i(abcd;q)_{2i}}{a^{2i}(q,bc,bd,cd,abcdq^{i-1};q)_i}.
$$
For $i=D$,
$$
k_D = \frac{(ab,ac,ad;q)_D(abcd;q)_{2D-1}}{a^{2D}(q,bc,bd,abcdq^{D-1};q)_D(cd;q)_{D-1}}.
$$

\item[\rm(iii)] For $0 \leq i \leq D-2$,
$$
k^\p_i = \frac{(abq^2,acq,adq;q)_i(abcdq^2;q)_{2i}}{(aq)^{2i}(q,bcq,bdq,cd,abcdq^{i+1};q)_i}.
$$
\end{enumerate}
\end{lemma}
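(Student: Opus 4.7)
The entire lemma is proved by routine substitution using the identities (\ref{abcd;simple;formula}), which express each of the products $ab, ac, ad, bc, bd, cd, abcd$ as a simple monomial in $q, s, s^*, r_1, r_2$. There is no conceptual difficulty; the only point of care is a boundary cancellation at $i=D$ in part (ii).

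For part (i), the plan is to start from the definitions (\ref{xi;epsilon}) and apply $q^{-D} = ab$, $s^*q^2 = abcd$, and $s^*q^{D+2} = cd$ to rewrite
\[
1 - q^{i-D} = 1 - abq^i, \qquad 1 - s^*q^{i+1} = 1 - abcdq^{i-1}, \qquad 1 - s^*q^{D+i+1} = 1 - cdq^{i-1}.
\]
A direct computation of the common denominator in $1 - \epsilon_i$ then collapses to
\[
1 - \epsilon_i = \frac{(1-ab)(1-abcdq^{2i-1})}{(1-abq^i)(1-abcdq^{i-1})},
\]
and the four displayed identities follow immediately by taking reciprocals and combining with the corresponding rewriting $\xi_i = q^{1-i}(1-abq^i)(1-abcdq^{i-1})$.

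For part (ii), I would substitute (\ref{abcd;simple;formula}) into the formula for $k_i$ given in Lemma \ref{k;kp;nu;nup}(i). The relation $r_1r_2 = ss^*q^{D+1}$ together with Definition \ref{abcd} yields $s^iq^i = a^{2i}$, and two further $q$-Pochhammer rearrangements, namely $(s^*q;q)_i / (1-s^*q) = (abcd;q)_{i-1}$ and
\[
(abcd;q)_{i-1}(1-abcdq^{2i-1}) = \frac{(abcd;q)_{2i}}{(abcdq^{i-1};q)_i},
\]
deliver the stated formula for $0 \leq i \leq D-1$. For $i = D$ the same substitution first produces $(abcd;q)_{2D}$ in the numerator and $(cd;q)_D$ in the denominator; the asymmetric-looking form of $k_D$ then emerges by cancelling the common factor $1 - abcdq^{2D-1} = 1 - cdq^{D-1}$, which reduces $(abcd;q)_{2D}$ to $(abcd;q)_{2D-1}$ and $(cd;q)_D$ to $(cd;q)_{D-1}$.

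Part (iii) proceeds identically to (ii), starting from Lemma \ref{k;kp;nu;nup}(ii) and using the shifted scalars $a^{\p} = aq, \, b^{\p} = bq, \, c^{\p} = c, \, d^{\p} = d$ with the shifted identities $a^{\p}b^{\p} = q^{-D+2}$ and $a^{\p}b^{\p}c^{\p}d^{\p} = abcd\cdot q^2$. No new phenomenon appears here, so the verification is purely mechanical. The main obstacle throughout is nothing more than careful bookkeeping of powers of $q$; the only nontrivial step is recognizing the cancellation that collapses $k_D$ to its stated compact form.
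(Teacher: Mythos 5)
Your proposal is correct and follows exactly the route the paper takes: the paper's proof of this lemma is a one-line instruction to substitute (\ref{rels;abcd}) into (\ref{xi;epsilon}) for part (i) and into Lemma \ref{k;kp;nu;nup}(i),(ii) for parts (ii),(iii), and your computations (the collapse of $1-\epsilon_i$, the identity $(abcd;q)_{i-1}(1-abcdq^{2i-1})=(abcd;q)_{2i}/(abcdq^{i-1};q)_i$, and the cancellation of $1-abcdq^{2D-1}=1-cdq^{D-1}$ at $i=D$) are the details that verification requires. All the individual identities you cite check out.
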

\begin{proof}
(i): Use (\ref{rels;abcd}) and (\ref{xi;epsilon}).\\
(ii), (iii): Use (\ref{rels;abcd}) and Lemma \ref{k;kp;nu;nup} (i), (ii).
\end{proof}

Recall the polynomial sequences $\{p_i\}^D_{i=0}$ and $\{p_i^\p\}^{D-2}_{i=0}$ from (\ref{AWpoly;ab;finite}) and (\ref{AWpoly;perp;finite}), respectively. Denote their monic polynomials by
\begin{equation}\label{monic;P;Pp}
P_i = \frac{(ab,ac,ad;q)_i}{a^i(abcdq^{i-1};q)_i}p_i , \qquad \qquad 
P^\p_i = \frac{(abq^2,acq,adq;q)_i}{(aq)^i(abcdq^{i+1};q)_i}p^\p_i.
\end{equation}
One checks that $P^\p_i=P_i[y;aq,bq,c,d\mid q]$.

\begin{lemma}\label{e;xi;ki;kip}
\begin{itemize}
\item[\rm (i)] 
For $0\leq i \leq D-1$,
$$ k_ip_i = \frac{(abcd;q)_{2i}}{a^i(q,bc,bd,cd;q)_i}P_i.$$
For $i=D$,
$$
k_Dp_D = \frac{(abcd;q)_{2D-1}}{a^D(q,bc,bd;q)_D(cd;q)_{D-1}}P_D.
$$

\item[\rm (ii)] For $0 \leq i \leq D-2$,
$$
k_i^\p p^\p_i = \frac{(abcdq^2;q)_{2i}}{a^iq^i(q,bcq,bdq,cd;q)_i}P^\p_i.
$$
\end{itemize}
\end{lemma}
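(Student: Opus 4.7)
The plan is to prove this lemma by direct computation: in each case, substitute the closed-form expression for $k_i$ (respectively $k_D$, $k^\p_i$) from Lemma \ref{e;x;k}(ii)--(iii) into the product $k_i p_i$ (respectively $k_D p_D$, $k^\p_i p^\p_i$), then replace $p_i$ (respectively $p_D$, $p^\p_i$) by its expression in terms of the monic polynomial $P_i$ (respectively $P_D$, $P^\p_i$) obtained by inverting the definitions in (\ref{monic;P;Pp}).

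For part (i) with $0 \leq i \leq D-1$: from (\ref{monic;P;Pp}) we have
\[
p_i = \frac{a^i(abcdq^{i-1};q)_i}{(ab,ac,ad;q)_i}\,P_i.
\]
Multiplying by the formula for $k_i$ in Lemma \ref{e;x;k}(ii), the factors $(ab,ac,ad;q)_i$ and $(abcdq^{i-1};q)_i$ cancel between numerator and denominator, and $a^{2i}/a^i = a^i$, yielding the stated formula. The case $i=D$ is entirely analogous, using the $i=D$ form of $k_D$ given in Lemma \ref{e;x;k}(ii); the only bookkeeping issue is that $(cd;q)_i$ is replaced by $(cd;q)_{D-1}$ and the $(abcd;q)_{2i}$ factor appears as $(abcd;q)_{2D-1}$ instead of $(abcd;q)_{2D}$, exactly matching the mismatched indexing already built into the formula for $k_D$.

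For part (ii): from (\ref{monic;P;Pp}) we have
\[
p^\p_i = \frac{(aq)^i(abcdq^{i+1};q)_i}{(abq^2,acq,adq;q)_i}\,P^\p_i.
\]
Multiplying by the formula for $k^\p_i$ in Lemma \ref{e;x;k}(iii), the factors $(abq^2,acq,adq;q)_i$ and $(abcdq^{i+1};q)_i$ cancel, leaving $(aq)^{2i}/(aq)^i = (aq)^i = a^iq^i$ in the denominator, which gives the claimed form.

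There is no real obstacle here; the entire proof is elementary cancellation of $q$-Pochhammer factors. The only point requiring slight care is the $i=D$ boundary case in part (i), where the formula for $k_D$ differs from the generic $k_i$ by the presence of $(cd;q)_{D-1}$ instead of $(cd;q)_D$ and $(abcd;q)_{2D-1}$ instead of $(abcd;q)_{2D}$; one must track these shifted indices consistently through the cancellation to confirm the stated expression for $k_D p_D$.
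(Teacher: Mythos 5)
Your proposal is correct and follows exactly the paper's own (one-line) proof: substitute the closed forms of $k_i$, $k_D$, $k^\p_i$ from Lemma \ref{e;x;k}(ii)--(iii), invert the monic normalizations in (\ref{monic;P;Pp}), and cancel the common $q$-Pochhammer factors. The cancellations, including the shifted indices $(cd;q)_{D-1}$ and $(abcd;q)_{2D-1}$ in the $i=D$ case, all check out.
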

\begin{proof}
Use Lemma \ref{e;x;k} (ii), (iii) and (\ref{monic;P;Pp}).
\end{proof}

\begin{proposition}\label{e;abcd}
Let $\{\varepsilon^{\pm}_i\}^{D-1}_{i=0}$ be as in Definition {\rm\ref{e;s,r,h}}. 
Referring to the scalars $a,b,c,d$ associated with $p(\Phi)$, 
the $\{\varepsilon^{\pm}_i\}^{D-1}_{i=0}$ are equal to the following: 
For $1 \leq i \leq D-1$,
\begin{align*}
\varepsilon^+_{i-1} & = \frac{ab(1-q^i)(1-cdq^{i-1})}{(ab-1)(1-abcdq^{2i-1})} 
\frac{(abcd;q)_{2i}}{a^i(q,bc,bd,cd;q)_i}\left(P_i - y(1-ay^{-1})(1-by^{-1})P^\p_{i-1}\right),\\
\nonumber
\varepsilon^-_{i} & = \frac{(1-abq^i)(1-abcdq^{i-1})}{(1-ab)(1-abcdq^{2i-1})}
\frac{(abcd;q)_{2i}}{a^i(q,bc,bd,cd;q)_i} \\
& \qquad \times \left(P_i - \frac{ab(1-q^i)(1-cdq^{i-1})}{(1-abq^i)(1-abcdq^{i-1})}y(1-ay^{-1})(1-by^{-1})P^\p_{i-1}\right).
\end{align*}
Moreover,
$$
\varepsilon^-_0 = 1, \qquad \qquad 
\varepsilon^+_{D-1} = \frac{(abcd;q)_{2D-1}}{a^D(q,bc,bd;q)_D(cd;q)_{D-1}}P_D.
$$
\end{proposition}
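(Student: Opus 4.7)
The plan is to substitute into Definition \ref{e;s,r,h} all the closed-form expressions that have been collected in Lemmas \ref{e;x;k} and \ref{e;xi;ki;kip}, together with the closed form for $g[y]$, and then simplify the resulting $q$-Pochhammer symbols until the formulas of the proposition appear. Throughout, the terms proportional to $P_i$ and to $y(1-ay^{-1})(1-by^{-1})P^\p_{i-1}$ are handled separately.

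First, for $1 \le i \le D-1$, I would rewrite
\[
\varepsilon^{+}_{i-1}
  = \frac{\epsilon_i}{\epsilon_i-1}\,k_i\,p_i(y+y^{-1})
  + \frac{1}{\xi_i(1-\epsilon_i)}\,k^{\p}_{i-1}\,p^{\p}_{i-1}(y+y^{-1})\,g[y],
\]
using Lemma \ref{e;xi;ki;kip}(i),(ii) to replace $k_ip_i$ by
$\tfrac{(abcd;q)_{2i}}{a^i(q,bc,bd,cd;q)_i}P_i$ and $k^{\p}_{i-1}p^{\p}_{i-1}$ by
$\tfrac{(abcdq^2;q)_{2i-2}}{a^{i-1}q^{i-1}(q,bcq,bdq,cd;q)_{i-1}}P^{\p}_{i-1}$, and using Lemma \ref{e;x;k}(i) to replace the scalar factors by the stated rational functions of $a,b,c,d,q$. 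The factor $g[y]$ is rewritten in the form
\[
g[y]=\frac{b(1-abcd)(1-abcdq)}{(1-bc)(1-bd)}\,y(1-ay^{-1})(1-by^{-1})
\]
established just before Lemma \ref{e;x;k}.

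The coefficient of $P_i$ is then already exactly the prefactor on the right-hand side of the claim, namely
\[
\frac{ab(1-q^i)(1-cdq^{i-1})}{(ab-1)(1-abcdq^{2i-1})}\,\frac{(abcd;q)_{2i}}{a^i(q,bc,bd,cd;q)_i}.
\]
The key step is then to show that the coefficient of $y(1-ay^{-1})(1-by^{-1})P^\p_{i-1}$ is the negative of this prefactor. After cancelling $q^{i-1}$ this coefficient becomes
\[
\frac{b(1-abcd)(1-abcdq)(abcdq^2;q)_{2i-2}}{(1-ab)(1-abcdq^{2i-1})a^{i-1}(q;q)_{i-1}(bc;q)_{i-1}(bd;q)_{i-1}(cd;q)_{i-1}(1-bc)(1-bd)}.
\]
Three telescopings then do all the work: $(1-abcd)(1-abcdq)(abcdq^2;q)_{2i-2}=(abcd;q)_{2i}$, and $(1-bc)(bc;q)_{i-1}=(bc;q)_i$, $(1-bd)(bd;q)_{i-1}=(bd;q)_i$. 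Combined with the identities $(1-q^i)(q;q)_{i-1}=(q;q)_i$, $(1-cdq^{i-1})(cd;q)_{i-1}=(cd;q)_i$ and $ab/a^i=b/a^{i-1}$, one verifies the coefficient is the negative of the $P_i$ prefactor, giving the stated form of $\varepsilon^+_{i-1}$.

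The derivation of $\varepsilon^{-}_i$ is parallel. After the same substitutions, the $P_i$ coefficient is $\tfrac{(1-abq^i)(1-abcdq^{i-1})}{(1-ab)(1-abcdq^{2i-1})}\cdot\tfrac{(abcd;q)_{2i}}{a^i(q,bc,bd,cd;q)_i}$, and the ratio of the $P^\p_{i-1}$ coefficient to this one simplifies (by the same Pochhammer telescopings) to $-\tfrac{ab(1-q^i)(1-cdq^{i-1})}{(1-abq^i)(1-abcdq^{i-1})}$, which is precisely the prefactor appearing inside the parenthesis in the claim. Finally, the boundary case $\varepsilon^-_0=1$ follows directly from Definition \ref{e;s,r,h}, and $\varepsilon^+_{D-1}=k_Dp_D=\tfrac{(abcd;q)_{2D-1}}{a^D(q,bc,bd;q)_D(cd;q)_{D-1}}P_D$ is immediate from Lemma \ref{e;xi;ki;kip}(i) applied at $i=D$.

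The main obstacle is purely bookkeeping: keeping track of which $q$-shifted Pochhammer symbols telescope into which, and ensuring the $a^i$ versus $a^{i-1}$ and $q^{i-1}$ factors cancel as claimed. No new conceptual input is required beyond Lemmas \ref{e;x;k}, \ref{e;xi;ki;kip}, and the product form of $g[y]$.
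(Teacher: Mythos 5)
Your proposal is correct and takes exactly the paper's (one-line) approach: substitute Lemmas \ref{e;x;k} and \ref{e;xi;ki;kip} and the product form of $g[y]$ into Definition \ref{e;s,r,h} and simplify. One small slip to fix: in your displayed intermediate coefficient the factors coming from Lemma \ref{e;xi;ki;kip}(ii) are $(bcq;q)_{i-1}$ and $(bdq;q)_{i-1}$ (not $(bc;q)_{i-1}$, $(bd;q)_{i-1}$), so the telescopings you need are $(1-bc)(bcq;q)_{i-1}=(bc;q)_i$ and $(1-bd)(bdq;q)_{i-1}=(bd;q)_i$ --- the identities as you literally wrote them are false, but with the corrected Pochhammer arguments the computation closes exactly as you describe.
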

\begin{proof}
Apply Lemma \ref{e;x;k} and Lemma \ref{e;xi;ki;kip} to Definition \ref{e;s,r,h}. The result follows.
\end{proof}

\noindent
We give a comment. For $1 \leq i \leq D-1$, the $\varepsilon^+_{i-1}$ has of the form
\begin{align*}
\frac{ab(1-q^i)(1-cdq^{i-1})}{(ab-1)(1-abcdq^{2i-1})} 
\frac{(abcd;q)_{2i}}{a^i(q,bc,bd,cd;q)_i}
\Big( ({\rm constant}) y^{i-1} + \cdots + (1-ab)y^{-i} \Big),
\end{align*}
and the $\varepsilon^-_i$ has of the form
\begin{align*}
\frac{(abcd;q)_{2i}}{a^i(q,bc,bd,cd;q)_i}
\left( y^i + \cdots
+ \frac{1+ab-abq^i-abcdq^{i-1}}{1-abcdq^{2i-1}}y^{-i} \right).
\end{align*}
Therefore the set $\{\varepsilon^{\pm}_i\}^{D-1}_{i=0}$ is linearly independent in $\C[y,y^{-1}]$.

\begin{remark}\label{H-mod;L}
Let $L$ denote the subspace of $\C[y,y^{-1}]$ spanned by $\{\varepsilon^{\pm}_i\}^{D-1}_{i=0}$.
By the above comment, $\{\varepsilon^{\pm}_i\}^{D-1}_{i=0}$ are a basis for $L$.
Recall the $\H$-module $\W(a,b,c,d;q)$ in Lemma \ref{Hq-mod;abcd}.
The space $L$ is isomorphic to $\W$ via a $\C$-vector space isomorphism that 
sends $\varepsilon^{\sigma}_i$ to $\hat{C}^{\sigma}_i$ for $\sigma \in \{+,-\}$
and $i=0,1,\ldots , D-1$.
By these comments we can endow a module structure for $\H$ with $L$. 
On this module $L$, the matrix representing $t_n$ relative to a basis $\varepsilon^-_0,\varepsilon^+_0, \varepsilon^-_0,\varepsilon^+_0, \ldots, \varepsilon^-_{D-1},\varepsilon^+_{D-1}$ coincides with the matrix ${\bf T}_n$ in Lemma \ref{Hq-mod;abcd}.
We denote this module by $L(a,b,c,d;q)$.
\end{remark}

\section{The operator $\Y$}

In Section \ref{Section;UDAHA} we mentioned the eigenvalues/ eigenvectors of $\X$ on $\W(s,s^*,r_1,r_2,D;q)$. 
In this section we will find eigenvectors of $\Y$ along with the corresponding eigenvalues on $\W(s,s^*,r_1,r_2,D;q)$.
First we find the eigenvalues of $\A=\Y+\Y^{-1}$. Throughout this section we work on the $\H$-module $\W=\W(s,s^*,r_1,r_2,D;q)$.
\begin{lemma}\label{e-val(A)}
The eigenvalues of $\A$ are
 $q^i(sq)^{1/2}+q^{-i}(sq)^{-1/2}$ for  $0 \leq i \leq D$.
\end{lemma}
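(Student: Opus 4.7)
The plan is to pull the eigenvalue statement for $\A$ back to an eigenvalue statement for the adjacency matrix $A$ via Theorem \ref{MainThm;JHL}(i), and then use the $q$-Racah formula \eqref{theta_i} for the eigenvalues of $A$.

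First I would recall from Section \ref{T-mod;W} that $\W$ decomposes into two irreducible $T$-modules $\W = \Mx \oplus \Mxp$. The adjacency matrix $A$ acts on $\Mx$ with spectrum $\{\tht_i\}_{i=0}^{D}$, since $\Phi = (A;A^*;\{E_i\}_{i=0}^{D};\{E^*_i\}_{i=0}^{D})$ is a Leonard system on $\Mx$. On $\Mxp$ the primitive idempotents are $E^\p_i = E_{i+1}$ for $0 \leq i \leq D-2$, so $A$ acts with spectrum $\{\tht_i\}_{i=1}^{D-1}$. Combining these, the set of eigenvalues of $A$ on $\W$ is exactly $\{\tht_i\}_{i=0}^{D}$.

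Next I would invoke Theorem \ref{MainThm;JHL}(i), which says that on $\W$, the matrix $A$ equals $h(sq)^{1/2}\A + (\tht_0 - h - hsq)\cdot I$. Since this is an affine transformation of $\A$ with nonzero leading coefficient, $\A$ is diagonalizable with eigenvalues in bijection with those of $A$ via
$$\lambda_{\A}(i) \;=\; \frac{\tht_i - \tht_0 + h + hsq}{h(sq)^{1/2}}, \qquad 0 \leq i \leq D.$$

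Finally I would substitute the $q$-Racah formula \eqref{theta_i}, namely $\tht_i - \tht_0 = h(1-q^i)(1-sq^{i+1})q^{-i}$. A brief algebraic simplification of the numerator gives
$$(1-q^i)(1-sq^{i+1})q^{-i} + 1 + sq \;=\; q^{-i} + sq^{i+1},$$
so that $\lambda_{\A}(i) = (q^{-i} + sq^{i+1})/(sq)^{1/2} = q^{i}(sq)^{1/2} + q^{-i}(sq)^{-1/2}$, as desired. There is no real obstacle here; the only point requiring care is to make sure the $D+1$ values produced are mutually distinct (so that we have accounted for all eigenvalues), which follows from the non-degeneracy conditions in Example \ref{ex;q-racah;PA} guaranteeing that $\{\tht_i\}_{i=0}^{D}$ are distinct.
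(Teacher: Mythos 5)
Your proposal is correct and follows essentially the same route as the paper: both invoke Theorem \ref{MainThm;JHL}(i) to write $\A$ as an affine function of $A$ and then substitute the $q$-Racah form (\ref{theta_i}) of the eigenvalues $\tht_i$; you merely make explicit the decomposition $\W=\Mx\oplus\Mxp$ justifying that the spectrum of $A$ on $\W$ is all of $\{\tht_i\}_{i=0}^{D}$, and carry out the simplification $(1-q^i)(1-sq^{i+1})q^{-i}+1+sq=q^{-i}+sq^{i+1}$ that the paper leaves to the reader.
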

\begin{proof}
By Theorem \ref{MainThm;JHL}(i), we find the equation 
$\A={h^{-1}(sq)^{-1/2}} \left( A-(\tht_0-h-hsq)I \right)$ on $\W$. 
Recall that $A$ has the eigenvalues $\{\tht_i\}^D_{i=0}$ and 
each $\tht_i$ has the form (\ref{theta_i}). 
By these comments, the result follows.
\end{proof}
For notational convenience we denote $\ell_i=q^i(sq)^{1/2}+q^{-i}(sq)^{-1/2}$ for $0 \leq i \leq D$. 
Let $W_{\ell_i}$ denote the eigenspace of $\A$ for $\ell_i$.
Then $W_{\ell_i}=E_i\W$ for $0 \leq i \leq D$ since $A$ and $\A$ share a common eigenvector by Theorem \ref{MainThm;JHL}(i).
Observe that $\W=\sum^D_{i=0} W_{\ell_i}$, the orthogonal direct sum.
Moreover, by construction of the $T$-module $\W$ the following lemma holds.
\begin{lemma}\label{E-spaceA} For $1 \leq i \leq D-1$, we have
$$
W_{\ell_i} = {\rm span}\{E_iv_0, E_{i-1}^\p v_0^\p\}.
$$
Moreover, $W_{\ell_0} = {\rm span}\{ E_0v_0 \}$ 
and $W_{\ell_D} = {\rm span}\{ E_Dv_0\}.$
\end{lemma}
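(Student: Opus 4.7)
The plan is to use the orthogonal decomposition $\W = \Mx \oplus \Mxp$ of $\T$-modules and exploit the fact that on each irreducible summand the relevant Leonard system has one-dimensional eigenspaces.

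First I would observe that by Theorem \ref{MainThm;JHL}(i), the operator $\A$ acts on $\W$ as an invertible affine function of $A$, so its eigenspace $W_{\ell_i}$ coincides with the $\tht_i$-eigenspace of $A$, that is $W_{\ell_i}=E_i\W$. Since $\Mx$ and $\Mxp$ are both $A$-invariant (being $T$-submodules), we get the orthogonal decomposition
\begin{equation*}
W_{\ell_i}=E_i\W=E_i(\Mx)\oplus E_i(\Mxp).
\end{equation*}

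Next I would handle each summand separately using the Leonard systems $\Phi$ and $\Phi^\p$. On $\Mx$, the Leonard system $\Phi$ has primitive idempotents $\{E_j\}_{j=0}^{D}$ (restricted to $\Mx$), so $E_i(\Mx)$ is one-dimensional for every $0\le i\le D$; it is spanned by $E_iv_0$, which is nonzero because $v_0=\hat{x}$ generates $\Mx$ as a $T$-module. On $\Mxp$, the Leonard system $\Phi^\p$ has primitive idempotents $\{E_j^\p\}_{j=0}^{D-2}$ with $E_j^\p=E_{j+1}$ (restricted to $\Mxp$). Consequently, for $j \in \{0, D\}$ the operator $E_j$ annihilates $\Mxp$, while for $1\le i\le D-1$ the space $E_i(\Mxp)=E_{i-1}^\p(\Mxp)$ is one-dimensional and spanned by $E_{i-1}^\p v_0^\p$, which is nonzero because $v_0^\p$ generates the irreducible $T$-module $\Mxp$.

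Combining these two observations gives exactly the three cases in the statement: for $1\le i\le D-1$ both summands contribute one dimension, giving $W_{\ell_i}=\mathrm{span}\{E_iv_0,\,E_{i-1}^\p v_0^\p\}$, while for $i=0$ and $i=D$ the $\Mxp$-summand vanishes and only $\mathrm{span}\{E_iv_0\}$ remains. The only minor point to verify is the nonvanishing statements $E_iv_0\neq 0$ and $E_{i-1}^\p v_0^\p\neq 0$; these follow from the standard Leonard-system fact that the image of a cyclic (standard-basis) vector under any primitive idempotent is nonzero, which in our setting is immediate because $\{E_jv_0\}_{j=0}^{D}$ and $\{E_j^\p v_0^\p\}_{j=0}^{D-2}$ are each, up to scalars, the split bases dual to $\{v_j\}$ and $\{v_j^\p\}$. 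I do not anticipate any real obstacle; the content is essentially bookkeeping between $\W=\Mx\oplus\Mxp$ and the correspondence $E_j^\p=E_{j+1}$ on $\Mxp$.
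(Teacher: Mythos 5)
Your argument is correct and is exactly what the paper has in mind: the paper states this lemma without a written proof, attributing it to ``the construction of the $T$-module $\W$,'' which is precisely the decomposition $\W=\Mx\oplus\Mxp$ into irreducible $T$-modules, the one-dimensionality of the $A$-eigenspaces on each summand, and the index shift $E^\p_j=E_{j+1}$ on $\Mxp$ that you use. The nonvanishing of $E_iv_0$ and $E^\p_{i-1}v_0^\p$ is indeed the standard dual-standard-basis fact for Leonard systems (applied to $\Phi^*$ and $\Phi^{\p*}$, since $v_0\in E^*_0\Mx$ and $v_0^\p\in E^{*\p}_0\Mxp$), consistent with the paper's later use of $\mcal{B}$ as a basis.
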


By Lemma \ref{kn;diagonalizable} each of $t_0$ and $t_1$ is diagonalizable. 
By \cite[Lemma 3.8]{IT} $t_0$ and $t_1$ commute with $\A=t_0t_1 + (t_0t_1)^{-1}$, 
so each of $t_0$ and $t_1$ shares the eigenspaces of $\A$. 
It follows that $W_{\ell_i}(0 \leq i \leq D)$ is invariant under $t_n (n=0,1)$. 
By these comments and Lemma \ref{E-spaceA} 
the matrix representing $t_n (n=0,1)$ relative to the ordered basis
$$
\mcal{B} := \{ E_0v_0, E_1v_0, E_0^\p v_0^\p, E_2v_0,E_1^\p v_0^\p, \ldots, E_{D-1}v_0,E_{D-2}^\p v_0^\p,E_Dv_0 \}
$$
for $\W$ takes the form
\begin{equation}\label{MatForm}
\scalemath{0.7}{
\begin{pmatrix}
* &&&&&&& \vspace {-0.2cm}\\
&*&*&&&&& \\
&*&*&&&&& \vspace {-0.2cm}\\
&&&*&*&&& \\
&&&*&*&&& \vspace {-0.2cm}\\
&&&&&\ddots &&\vspace {-0.2cm}\\
&&&&&&*&*& \\
&&&&&&*&*& \vspace {-0.2cm}\\
&&&&&&&&* \\
\end{pmatrix}.}
\end{equation}
We explicitly find the matrix representing $t_n (n=0,1)$ relative to $\mcal{B}$,
in order to find the eigenvectors of $\Y$.
First we find the eigenvectors of $t_0$ in terms of vectors in $\mcal{B}$. 
For notational convenience we define $\hat{C}_{-1}^{-}=0, \hat{C}_{-1}^{+}=0$ and $\hat{C}^{-}_D=0, \hat{C}^{+}_D=0$.

\begin{lemma}\label{e-vec;t0(W)}
For $1 \leq i \leq D-1$,
\begin{align}
\label{e-vector;k0}
&t_0.(\hat{C}^+_{i-1}+\hat{C}^-_i) = \k_0(\hat{C}^+_{i-1}+\hat{C}^-_i), \\
\label{e-vector;k0-1}
&t_0.(\epsilon^{-1}_i \hat{C}^+_{i-1}+\hat{C}^-_i)=\k_0^{-1}(\epsilon^{-1}_i \hat{C}^+_{i-1}+\hat{C}^-_i),
\end{align}
where $\epsilon_i$ is from {\rm (\ref{xi;epsilon})} and $\k_0$ is from {\rm (\ref{k_n})}.
\end{lemma}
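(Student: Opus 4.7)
The plan is to deduce both eigenvector identities from Theorem \ref{MainThm;JHL}(iv) together with the fact that $\W$ decomposes as a direct sum of the two irreducible $T$-modules $\Mx$ and $\Mxp$.

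First I would extract from Theorem \ref{MainThm;JHL}(iv) that $t_0$ acts on $\W$ as a scalar on each of $\Mx$ and $\Mxp$: since $P := (t_0-\k_0^{-1})(\k_0-\k_0^{-1})^{-1}$ is the projection onto $\Mx$ along $\Mxp$, applying $P$ to a vector in $\Mx$ yields the vector itself, which forces $t_0$ to act as $\k_0$ on $\Mx$; and applying $P$ to a vector in $\Mxp$ yields $0$, which forces $t_0$ to act as $\k_0^{-1}$ on $\Mxp$.

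Next I would identify the vectors in the statement as elements of these two summands. Recall from Section \ref{T-mod;W} that $v_i = A_i\hat x$ satisfies $v_i = \hat{C}^+_{i-1}+\hat{C}^-_i$ for $1 \leq i \leq D-1$; since $v_i \in \Mx$, this immediately yields (\ref{e-vector;k0}). For the second identity, I would invoke the formula (\ref{basis;v_perp}), which gives
\[
v^\p_{i-1} = \xi_i\hat{C}^+_{i-1} + \xi_i\epsilon_i\hat{C}^-_i = \xi_i\epsilon_i\bigl(\epsilon_i^{-1}\hat{C}^+_{i-1} + \hat{C}^-_i\bigr).
\]
Since $\xi_i\epsilon_i \neq 0$ and $v^\p_{i-1} \in \Mxp$, the combination $\epsilon_i^{-1}\hat{C}^+_{i-1}+\hat{C}^-_i$ lies in $\Mxp$, yielding (\ref{e-vector;k0-1}).

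There is no real obstacle here; the lemma is essentially a translation of the two summands of $\W$ into the $\{\hat{C}^{\pm}_i\}$ basis, combined with the fact that $t_0$ has already been identified in Theorem \ref{MainThm;JHL}(iv) as (a normalized version of) the projector onto $\Mx$. The one place where care is needed is to confirm that the ranges of $i$ match: (\ref{basis;v_perp}) is valid for $1 \leq i \leq D-1$ (so the index $i-1$ runs from $0$ to $D-2$), which is exactly the range of $i$ in the statement.
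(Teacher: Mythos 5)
Your proposal is correct and follows essentially the same route as the paper's own proof: both arguments invoke Theorem \ref{MainThm;JHL}(iv) to identify $(t_0-\k_0^{-1})(\k_0-\k_0^{-1})^{-1}$ as the projection onto $\Mx$ (with $\Mxp$ as its kernel), then observe that $\hat{C}^+_{i-1}+\hat{C}^-_i=v_i\in\Mx$ and that $\epsilon_i^{-1}\hat{C}^+_{i-1}+\hat{C}^-_i$ is a nonzero multiple of $v^{\perp}_{i-1}\in\Mxp$ via (\ref{basis;v_perp}). Your explicit verification of the index range is a harmless extra check; nothing is missing.
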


\begin{proof}
To show (\ref{e-vector;k0}) it suffices to show that $(t_0-\k_0).(\hat{C}^+_{i-1}+\hat{C}^-_i) = 0$.
By Theorem \ref{MainThm;JHL}(iv) the 
$(t_0-\k^{-1}_0)(\k_0-\k^{-1}_0)^{-1}$ acts as the projection onto $\Mx$. 
By this and since $\hat{C}^+_{i-1}+\hat{C}^-_i=v_i \in \Mx$, we find (\ref{e-vector;k0}). 
Next we show (\ref{e-vector;k0-1}).
Similar to (\ref{e-vector;k0}), it suffices to show 
$(t_0-\k^{-1}_0).(\epsilon^{-1}\hat{C}^+_{i-1}+\hat{C}^-_i) = 0$.
Since $(t_0-\k_0)(\k^{-1}_0-\k_0)^{-1}=1-(t_0-\k^{-1}_0)(\k_0-\k^{-1}_0)^{-1}$ 
acts as the projection onto $\Mxp$ and 
$\epsilon_i^{-1}\hat{C}^+_{i-1}+\hat{C}^-_i = \epsilon_i^{-1}\xi^{-1}_iv^\p_{i-1} \in \Mxp$ 
for $1 \leq i \leq D-1$, the desired result follows.
\end{proof}
\noindent
Consider the eigenvector $\hat{C}^+_0+\hat{C}^-_1$ of $t_0$ for $\k_0$. Observe that
$\hat{C}^+_0+\hat{C}^-_1=v_1=Av_0=\sum^D_{r=0}\tht_rE_rv_0,$
where the last equation is from $A=\sum^D_{r=0}\tht_rE_r$.
Therefore the coordinate vector of $\hat{C}^+_0+\hat{C}^-_1$ relative to $\mcal{B}$
is
\begin{equation}\label{e-vec_t0_k0;i=1}
\big[ \tht_0, \tht_1, 0, \tht_2, 0, \ldots, \tht_{D-1},0,\tht_D \big]^t.
\end{equation}
\begin{lemma}\label{1-dim;t0,D}
For $i \in \{0, D\}$, 
$$
t_0.E_iv_0 = \k_0E_iv_0,
$$
where $\k_0$ is from {\rm (\ref{k_n})}.
\end{lemma}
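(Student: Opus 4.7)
The plan is to observe that $E_iv_0\in M\hat{x}$ for every $i$, and then invoke Theorem \ref{MainThm;JHL}(iv) directly. Indeed, $E_i$ lies in the adjacency algebra $M\subseteq T$, and the primary $T$-module $M\hat{x}$ is by definition $T$-invariant with $v_0=\hat{x}$, so $E_iv_0\in M\hat{x}$.

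By Theorem \ref{MainThm;JHL}(iv), the element $(t_0-\k_0^{-1})(\k_0-\k_0^{-1})^{-1}$ acts on $\W$ as the orthogonal projection onto $M\hat{x}$. Applying this projection to $E_iv_0$ fixes it, which rearranges to $t_0.E_iv_0=\k_0 E_iv_0$ for every $0\le i\le D$, and in particular for $i\in\{0,D\}$.

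A clean alternative (matching the staircase picture preceding the lemma) is to go through Lemma \ref{E-spaceA}: the spaces $W_{\ell_0}$ and $W_{\ell_D}$ are one-dimensional, spanned by $E_0v_0$ and $E_Dv_0$ respectively, and each is $t_0$-invariant since $t_0$ commutes with $\A$ by \cite[Lemma 3.8]{IT}. Thus $t_0$ acts on each of these lines as a scalar, which must be an eigenvalue $\k_0^{\pm 1}$ of $t_0$ by Lemma \ref{kn;diagonalizable}; but since $E_iv_0\in M\hat{x}$ and $t_0$ acts as $\k_0$ on $M\hat{x}$ by Theorem \ref{MainThm;JHL}(iv), this scalar is $\k_0$.

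There is no real obstacle here: the statement is essentially a specialization of Lemma \ref{e-vec;t0(W)} to the endpoints $i=0,D$, where the second eigenvector $\epsilon_i^{-1}\hat{C}_{i-1}^+ + \hat{C}_i^-$ degenerates because $C_{-1}^{\pm}$ and $C_D^{\pm}$ are empty by convention. The only thing to verify is that $E_0v_0$ and $E_Dv_0$ are nonzero, which follows from $\{E_iv_0\}_{i=0}^D$ being (proportional to) the $\Phi^*$-standard basis of $M\hat{x}$.
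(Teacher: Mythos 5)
Your proof is correct, but it takes a different route from the paper's. The paper derives the lemma from the block form (\ref{MatForm}) of $[t_0]_{\mcal{B}}$ together with the explicit eigenvector (\ref{e-vec_t0_k0;i=1}): since the first and last blocks are $1\times 1$ and the known $\k_0$-eigenvector has nonzero entries $\tht_0$ and $\tht_D$ in those positions, those scalar blocks must equal $\k_0$. You instead observe that $E_i\in M$, so $E_iv_0\in M\hat x$ for \emph{every} $0\le i\le D$, and then apply Theorem \ref{MainThm;JHL}(iv) directly: the projection $(t_0-\k_0^{-1})(\k_0-\k_0^{-1})^{-1}$ fixes $E_iv_0$, which rearranges to $t_0.E_iv_0=\k_0E_iv_0$. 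This is exactly the mechanism the paper already uses in Lemma \ref{e-vec;t0(W)}, so your argument is shorter, avoids any appeal to the matrix form or to the nonvanishing of $\tht_0,\tht_D$, and in fact proves the stronger statement for all $i$ (consistent with the first column of each block $[t_0(i)]$ in Lemma \ref{[t0]_B}). The paper's route, on the other hand, is self-contained within the computation of $[t_0]_{\mcal{B}}$ that Section 8 is building anyway. Your secondary argument via the $t_0$-invariance of the one-dimensional spaces $W_{\ell_0},W_{\ell_D}$ is also sound and matches the spirit of the surrounding discussion.
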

\begin{proof}
The matrix representing $t_0$ relative to $\mcal{B}$ has the form (\ref{MatForm}). 
This matrix has the eigenvector (\ref{e-vec_t0_k0;i=1}) corresponding to the
eigenvalue $\k_0$.
Use this and linear algebra to get the result.
\end{proof}
\noindent
Next consider the eigenvector 
$\epsilon_1^{-1}\hat{C}^+_0+\hat{C}^-_1$ of $t_0$ for $\k^{-1}_0$.
From the left in (\ref{lem8.3}) for $i=0$, 
\begin{equation}\label{C+0;Er;Eps}
\hat{C}^+_{0} 
= \tfrac{\epsilon_1}{\epsilon_1-1}v_1 + \tfrac{1}{\xi_1(1-\epsilon_1)}v^\p_{0}
=\sum^D_{r=0}\tfrac{\epsilon_1\tht_r}{\epsilon_1-1} E_rv_0 +\sum^{D-2}_{s=0} \tfrac{1}{\xi_1(1-\epsilon_1)} E_s^\p v_0^\p,
\end{equation}
where the last equality holds from 
$v_1 = Av_0=\sum^D_{r=0}\tht_rE_r$ and
$v_0^\p=\sum^{D-2}_{s=0}E^\p_sv_0^\p$.
Similarly, from the right in (\ref{lem8.3}) for $i=1$ we find
\begin{equation}\label{C-0;Er;Eps}
\hat{C}^-_1
=\sum^D_{r=0} \tfrac{\tht_r}{1-\epsilon_1} E_rv_0 + \sum^{D-2}_{s=0} \tfrac{1}{\xi_1(\epsilon_1-1)} E_s^\p v_0^\p.
\end{equation}
Using (\ref{C+0;Er;Eps}) and (\ref{C-0;Er;Eps}) we have
$\epsilon_1^{-1}\hat{C}^+_0+\hat{C}^-_1
= \sum^{D-2}_{s=0} \tfrac{1}{\xi_1\epsilon_1} E_s^\p v_0^\p$.
Therefore the coordinate vector of $\epsilon_1^{-1}\hat{C}^+_0+\hat{C}^-_1$ relative
to $\mcal{B}$ is
\begin{equation}\label{e-vec_t0_k0-;i=1}
\left[ 0, 0, \tfrac{1}{\xi_1\epsilon_1}, 0, \tfrac{1}{\xi_1\epsilon_1}, 0, \tfrac{1}{\xi_1\epsilon_1}, \ldots, 0,\tfrac{1}{\xi_1\epsilon_1},0 \right]^t.
\end{equation}

\begin{lemma}\label{[t0]_B}
The matrix representation of $t_0$ relative to $\mcal{B}$ is
$$
\bdiag\Big( [\k_0], [t_0(1)], [t_0(2)], \ldots, [t_0(D-1)], [\k_0] \Big),
$$
where $[t_0(i)] = \diag(\k_0, \k_0^{-1})$ for $1 \leq i \leq D-1$.
\end{lemma}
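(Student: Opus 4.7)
The plan is to leverage Theorem \ref{MainThm;JHL}(iv), which says that $(t_0-\k_0^{-1})(\k_0-\k_0^{-1})^{-1}$ acts on $\W$ as the orthogonal projection onto the primary $T$-module $\Mx$. Since $\W=\Mx\oplus \Mxp$, this forces $t_0$ to act as the scalar $\k_0$ on every vector in $\Mx$ and as the scalar $\k_0^{-1}$ on every vector in $\Mxp$. Consequently, any basis of $\W$ consisting of vectors that each lie entirely in one of $\Mx$ or $\Mxp$ is automatically an eigenbasis for $t_0$, and the matrix of $t_0$ in that basis is diagonal with entries $\k_0$ or $\k_0^{-1}$ according to which summand each basis vector belongs.

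I would next verify that the basis $\mcal{B}$ does split in this way. For $0\le r\le D$ the primitive idempotent $E_r$ lies in the adjacency algebra $M\subseteq T$, so
\[
E_rv_0 = E_r\hat{x} \in T\hat{x} = \Mx.
\]
On the other side, by the construction of $\Phi^\p$ in Section \ref{T-mod;W} each $E^\p_s$ acts on $\Mxp$ (it is the primitive idempotent of $A$ restricted to the irreducible $T$-module $\Mxp$), so $E^\p_s v^\p_0\in \Mxp$ for $0\le s\le D-2$. Thus the $D+1$ vectors $E_0v_0,E_1v_0,\ldots,E_Dv_0$ of $\mcal{B}$ lie in $\Mx$, while the remaining $D-1$ vectors $E^\p_0v^\p_0,\ldots,E^\p_{D-2}v^\p_0$ lie in $\Mxp$.

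Combining the two observations, $t_0$ acts on each $E_rv_0$ as $\k_0$ and on each $E^\p_sv^\p_0$ as $\k_0^{-1}$. Reading this off against the ordering of $\mcal{B}$ given in the excerpt, the first and last basis vectors $E_0v_0$ and $E_Dv_0$ each contribute a $1\times 1$ block $[\k_0]$, and the intermediate basis vectors group into consecutive pairs $(E_iv_0,E^\p_{i-1}v^\p_0)$ for $1\le i\le D-1$, each pair contributing a $2\times 2$ block $\diag(\k_0,\k_0^{-1})$. This is exactly the claimed block-diagonal form. There is no genuine obstacle; the block structure (\ref{MatForm}) and the one-dimensional case handled in Lemma \ref{1-dim;t0,D} serve as a consistency check on the endpoint blocks, but the real engine is the fact, supplied by Theorem \ref{MainThm;JHL}(iv), that $t_0$ is already scalar on each of $\Mx$ and $\Mxp$.
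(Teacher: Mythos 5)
Your proof is correct, and it is more direct than the one in the paper. The paper never uses the fact that each vector of $\mcal{B}$ already lies in $\Mx$ or in $\Mxp$; instead it first produces eigenvectors of $t_0$ in the $\hat{C}^{\pm}_i$ basis (Lemma \ref{e-vec;t0(W)}), writes two particular eigenvectors in coordinates relative to $\mcal{B}$ (the vectors (\ref{e-vec_t0_k0;i=1}) and (\ref{e-vec_t0_k0-;i=1})), and then solves the resulting $2\times 2$ matrix equation for each block $[t_0(i)]$. You bypass all of that by observing that Theorem \ref{MainThm;JHL}(iv) makes $t_0$ act as the scalar $\k_0$ on $\Mx$ and $\k_0^{-1}$ on $\Mxp$, and that $E_rv_0\in\Mx$ (since $E_r$ is a polynomial in $A$ and $\Mx=M\hat x$) while $E^\p_sv_0^\p\in\Mxp$ (since $E^\p_s=E_{s+1}\in T$ and $\Mxp$ is a $T$-module); your reading of ``projection onto $\Mx$'' as having kernel $\Mxp$ matches the paper's own use of the complementary idempotent in Lemma \ref{e-vec;t0(W)}. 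What the paper's longer route buys is the template actually needed for Lemma \ref{[t1]_B}: the eigenspaces $\MC$ and $\MC^\p$ of $t_1$ are not spanned by subsets of $\mcal{B}$, so there the coordinate-vector computation and the $2\times2$ matrix equation are unavoidable, whereas your shortcut is specific to $t_0$.
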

\begin{proof} 
Let $[t_0]_{\mcal{B}}$ denote the matrix representation of $t_0$ 
relative to $\mcal{B}$. Since $[t_0]_{\mcal{B}}$ has the form (\ref{MatForm})
and by Lemma \ref{1-dim;t0,D} we denote
$[t_0]_{\mcal{B}}$ by $\bdiag\left( [\k_0], [t_0(1)],  \ldots,[t_0(D-1)], [\k_0] \right),$
where the $[t_0(i)]$ is the matrix representing $t_0$ relative to $\{E_iv_0, E^\p_{i-1}v_0^\p\}$ for $1 \leq i \leq D-1$.
Using linear algebra along with (\ref{e-vec_t0_k0;i=1}) and (\ref{e-vec_t0_k0-;i=1})
we find the equation
\begin{equation*}\label{[t0(i)];equation}
[t_0(i)]
\begin{bmatrix}
\theta_i & 0 \\
0 & \frac{1}{\xi_1\epsilon_1}
\end{bmatrix}=
\begin{bmatrix}
\theta_i & 0 \\
0 & \frac{1}{\xi_1\epsilon_1}
\end{bmatrix}
\begin{bmatrix}
\k_0 & 0 \\
0 & \k_0^{-1}
\end{bmatrix}.
\end{equation*}
Evaluate $[t_0(i)]$ using the above equation. 
The result follows.
\end{proof}
\noindent

We now find the matrix representation of $t_1$ relative to $\mcal{B}$. 
We start with the following lemma.
\begin{lemma}\label{e-vec;t1} For $0 \leq i \leq D-1$,
\begin{align*}
& t_1.(\hat{C}^-_i+\hat{C}^+_i) = \k_1(\hat{C}^-_i+\hat{C}^+_i), \\
& t_1.(\tau_i\hat{C}^-_i+\hat{C}^+_i) = \k^{-1}_1(\tau_i\hat{C}^-_i+\hat{C}^+_i),
\end{align*}
where $\k_1$ is from {\rm (\ref{k_n})} and 
\begin{equation*}
\tau_i=\frac{s^*(1-r_1q^{i+1})(1-r_2q^{i+1})}{(r_1-s^*q^{i+1})(r_2-s^*q^{i+1})}.
\end{equation*}
\end{lemma}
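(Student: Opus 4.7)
The plan is to parallel the argument for Lemma \ref{e-vec;t0(W)}, replacing the role of $t_0$ and the primary $T$-module $\Mx$ by $t_1$ and the primary $\wt T$-module $\MC$. By Theorem \ref{MainThm;JHL}(v), the element $(t_1-\k_1^{-1})(\k_1-\k_1^{-1})^{-1}$ acts on $\W$ as the orthogonal projection onto $\MC$, so $\MC$ is contained in the $\k_1$-eigenspace of $t_1$ and its orthogonal complement in $\W$ is contained in the $\k_1^{-1}$-eigenspace. It therefore suffices to verify that the first proposed eigenvector lies in $\MC$, and that the second lies in $\MC^\perp\cap\W$.

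For the first identity, note that $C^-_i$ and $C^+_i$ partition $C_i$ by (\ref{Ci_pm}), so $\hat{C}^-_i+\hat{C}^+_i=\hat{C}_i$. Since $\{\hat{C}_j\}^{D-1}_{j=0}$ is a basis for $\MC$ (as recalled at the end of Section \ref{S3;DRG}), the vector $\hat{C}_i$ belongs to $\MC$, and the equation $t_1.(\hat{C}^-_i+\hat{C}^+_i)=\k_1(\hat{C}^-_i+\hat{C}^+_i)$ follows immediately.

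For the second identity, the task is to verify that $\tau_i\hat{C}^-_i+\hat{C}^+_i$ is orthogonal to $\MC$ with respect to $\<\cdot,\cdot\>_V$. Because the cells $\{C_j\}^{D-1}_{j=0}$ are pairwise disjoint and $C^\pm_i\subseteq C_i$, one has $\<\hat{C}^\sigma_i,\hat{C}_j\>_V=0$ for all $j\neq i$ and $\sigma\in\{+,-\}$, and $\<\hat{C}^-_i,\hat{C}^+_i\>_V=0$. Orthogonality to $\MC$ therefore reduces to the single numerical identity
$$
\tau_i\,\Vert\hat{C}^-_i\Vert^2+\Vert\hat{C}^+_i\Vert^2=0.
$$
I would establish this by inserting the closed forms from Lemma \ref{norm;C;pm} and simplifying the ratio $\Vert\hat{C}^+_i\Vert^2/\Vert\hat{C}^-_i\Vert^2$ by means of the elementary identity $(aq;q)_i/(a;q)_i=(1-aq^i)/(1-a)$ applied with $a\in\{r_1q,r_2q,s^*q/r_1,s^*q/r_2\}$. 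After combining the resulting factors, the quotient collapses to $-s^*(1-r_1q^{i+1})(1-r_2q^{i+1})/((r_1-s^*q^{i+1})(r_2-s^*q^{i+1}))=-\tau_i$, as required.

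The only obstacle is the algebraic manipulation in the ratio of the two norms; this is bookkeeping with $q$-Pochhammer symbols, involves no new idea, and should go through cleanly.
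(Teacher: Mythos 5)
Your proof is correct and follows essentially the same route as the paper: invoke Theorem \ref{MainThm;JHL}(v) to identify the $\k_1$- and $\k_1^{-1}$-eigenspaces of $t_1$ with $\MC$ and its orthogonal complement in $\W$, then check that $\hat{C}^-_i+\hat{C}^+_i=\hat{C}_i\in\MC$ and that $\tau_i\hat{C}^-_i+\hat{C}^+_i$ lies in the complement. The only difference is that the paper simply cites that $\{\tau_i\hat{C}^-_i+\hat{C}^+_i\}^{D-1}_{i=0}$ is a basis for $\MC^\p$, whereas you verify membership in $\MC^\perp$ directly from Lemma \ref{norm;C;pm}; your ratio computation $\Vert\hat{C}^+_i\Vert^2/\Vert\hat{C}^-_i\Vert^2=-\tau_i$ is accurate, so the argument goes through.
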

\begin{proof}
It is similar to Lemma \ref{e-vec;t0(W)}.
Use Theorem \ref{MainThm;JHL}(v) and the fact that
$\{\hat{C}^-_i+\hat{C}^+_i\}^{D-1}_{i=0}$ is a basis for $\MC$
and $\{\tau_i\hat{C}^-_i+\hat{C}^+_i\}^{D-1}_{i=0}$ is a basis for $\MC^\p$.
\end{proof}

\noindent
Consider the eigenvector $\hat{C}^-_0+\hat{C}^+_0$ of $t_1$ for $\k_1$. 
Since $\hat{C}^-_0=v_0=\sum^D_{r=0}E_rv_0$,
by this and (\ref{C+0;Er;Eps}) we have
\begin{equation*}
\hat{C}^-_0+\hat{C}^+_0  = v_0 + \tfrac{\epsilon_1}{\epsilon_1-1}v_1 + \tfrac{1}{\xi_1(1-\epsilon_1)}v_0^\p 
= \sum^D_{r=0}\left( 1+\tfrac{\epsilon_1\tht_r}{\epsilon_1-1} \right)E_rv_0 + \sum^{D-2}_{s=0} \tfrac{1}{\xi_1(1-\epsilon_1)}E^\p_sv_0^\p.
\end{equation*}
One routinely checks that $1+\frac{\epsilon_1\tht_D}{\epsilon_1-1}=0$, and so the coordinate vector of $\hat{C}^-_0+\hat{C}^+_0$ relative to $\mcal{B}$ is
\begin{equation}\label{e-vec;t1;k1;i=0}
\left[ 1+\tfrac{\epsilon_1\tht_0}{\epsilon_1-1}, 1+\tfrac{\epsilon_1\tht_1}{\epsilon_1-1}, \tfrac{1}{\xi_1(1-\epsilon_1)},1+\tfrac{\epsilon_1\tht_2}{\epsilon_1-1},\tfrac{1}{\xi_1(1-\epsilon_1)}, \ldots, 1+\tfrac{\epsilon_1\tht_{D-1}}{\epsilon_1-1},\tfrac{1}{\xi_1(1-\epsilon_1)},0  \right]^t.
\end{equation}
Similarly, for the eigenvector $\tau_0\hat{C}^-_0+\hat{C}^+_0$ of $t_1$ for $k_1^{-1}$ we find
\begin{equation*}
\tau_0\hat{C}^-_0+\hat{C}^+_0 = \sum^D_{r=0}\left( \tau_0+\tfrac{\epsilon_1\tht_r}{\epsilon_1-1} \right)E_rv_0 + \sum^{D-2}_{s=0} \tfrac{1}{\xi_1(1-\epsilon_1)}E^\p_sv_0^\p.
\end{equation*}
One routinely checks that $\tau_0+\frac{\epsilon_1\tht_0}{\epsilon_1-1}=0$, and so the coordinate vector of $\tau_0\hat{C}^-_0+\hat{C}^+_0$ relative to $\mcal{B}$ is
\begin{equation}\label{e-vec;t1;k1-;i=0}
\left[ 0,\tau_0+\tfrac{\epsilon_1\tht_1}{\epsilon_1-1}, \tfrac{1}{\xi_1(1-\epsilon_1)}, \tau_0+\tfrac{\epsilon_1\tht_2}{\epsilon_1-1}, \tfrac{1}{\xi_1(1-\epsilon_1)}, \ldots, \tau_0+\tfrac{\epsilon_1\tht_{D-1}}{\epsilon_1-1},\tfrac{1}{\xi_1(1-\epsilon_1)},\tau_0+\tfrac{\epsilon_1\tht_D}{\epsilon_1-1}  \right]^t.
\end{equation}

\begin{lemma}\label{e-vec;t1;1-dim}
We have
$$
t_1.E_0v_0 = \k_1E_0v_0, \qquad \qquad t_1.E_Dv_0=\k_1^{-1}E_Dv_0.
$$
\end{lemma}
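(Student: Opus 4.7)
The proof should mirror the structure of Lemma \ref{1-dim;t0,D}, but this time the two one-dimensional blocks at the ends of the block-diagonal form will carry \emph{different} eigenvalues of $t_1$, so a small case analysis is needed.

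The plan is as follows. First, I would observe that $t_1$ commutes with $\A$ (by \cite[Lemma 3.8]{IT} applied to $\A = t_0t_1 + (t_0t_1)^{-1}$), so $t_1$ preserves every eigenspace $W_{\ell_i}$ of $\A$. Combined with Lemma \ref{E-spaceA}, this forces the matrix representing $t_1$ relative to $\mcal{B}$ to take the block-diagonal form \eqref{MatForm}, where the first and last blocks are $1 \times 1$ because $W_{\ell_0} = \mathrm{span}\{E_0 v_0\}$ and $W_{\ell_D} = \mathrm{span}\{E_D v_0\}$. Consequently there exist scalars $\mu, \mu'$ with
\[
t_1.E_0 v_0 = \mu\, E_0 v_0, \qquad t_1.E_D v_0 = \mu'\, E_D v_0,
\]
and by Lemma \ref{kn;diagonalizable} each of $\mu,\mu'$ lies in $\{\k_1, \k_1^{-1}\}$.

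Next I would identify $\mu$ and $\mu'$ using the explicit eigenvectors computed in \eqref{e-vec;t1;k1;i=0} and \eqref{e-vec;t1;k1-;i=0}. The eigenvector $\hat C^-_0 + \hat C^+_0$ of $t_1$ for $\k_1$ has coordinate vector \eqref{e-vec;t1;k1;i=0} relative to $\mcal{B}$, whose first entry $1 + \epsilon_1\tht_0/(\epsilon_1-1)$ is nonzero (equivalently $\tht_0 \ne (\epsilon_1-1)/\epsilon_1$, which holds generically for $q$-Racah parameters; the degeneracy assumptions in Example \ref{ex;q-racah;PA} rule it out). Hence the projection of this eigenvector onto the $E_0 v_0$ coordinate is nonzero, which forces $\mu = \k_1$. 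Symmetrically, the eigenvector $\tau_0 \hat C^-_0 + \hat C^+_0$ of $t_1$ for $\k_1^{-1}$ has last coordinate $\tau_0 + \epsilon_1 \tht_D/(\epsilon_1-1)$, which is nonzero by the same parameter assumptions, so $\mu' = \k_1^{-1}$.

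The only thing to be careful about is verifying that the two scalars $1 + \epsilon_1\tht_0/(\epsilon_1-1)$ and $\tau_0 + \epsilon_1\tht_D/(\epsilon_1-1)$ are indeed nonzero; this is a short direct calculation from \eqref{theta_i}, \eqref{xi;epsilon}, and the definition of $\tau_0$ in Lemma \ref{e-vec;t1}, using the non-degeneracy conditions in Example \ref{ex;q-racah;PA}. This minor verification is the only potentially delicate step; everything else is immediate from the block structure already recorded. One then concludes exactly as in Lemma \ref{1-dim;t0,D}.
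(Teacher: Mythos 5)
Your proposal is correct and follows essentially the same route as the paper: the block form \eqref{MatForm} forces $E_0v_0$ and $E_Dv_0$ to be eigenvectors of $t_1$ with eigenvalues in $\{\k_1,\k_1^{-1}\}$, and the explicit coordinate vectors \eqref{e-vec;t1;k1;i=0}, \eqref{e-vec;t1;k1-;i=0} pin down which eigenvalue occurs at each end. The nonvanishing check you flag is indeed the only delicate point, and it reduces to $\tau_0\ne 1$, since the first entry of \eqref{e-vec;t1;k1;i=0} equals $1-\tau_0$ and the last entry of \eqref{e-vec;t1;k1-;i=0} equals $\tau_0-1$; this holds because $r_1r_2\ne s^*$ and $s^*q^2\ne 1$ by the assumptions of Example \ref{ex;q-racah;PA}.
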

\begin{proof}
The matrix representing $t_1$ relative to $\mcal{B}$ has the form (\ref{MatForm}). 
Use this and linear algebra together with (\ref{e-vec;t1;k1;i=0}), (\ref{e-vec;t1;k1-;i=0}).
The result follows.
\end{proof}
\begin{lemma}\label{[t1]_B}
The matrix representation of $t_1$ relative to $\mcal{B}$ is
$$
\bdiag\Big( [\k_1], [t_1(1)], [t_1(2)], \ldots, [t_1(D-1)], [\k_1^{-1}] \Big),
$$
where for $1 \leq i \leq D-1$
$$
[t_1(i)] = 
\begin{bmatrix}
\k^{-1}_1\left( \frac{\k_1^2-\tau_0}{1-\tau_0}-\theta_i\frac{\epsilon_1(\k_1^2-1)}{(1-\epsilon_1)(1-\tau_0)} \right) &
\frac{\k^{-1}_1\xi_1(1-\k_1^2)(\epsilon_1\theta_i-1+\epsilon_1)(\epsilon_1\theta_i-\tau_0+\tau_0\epsilon_1)}{(1-\epsilon_1)(1-\tau_0)}\\
 \frac{\k^{-1}_1(\k^2_1-1)}{\xi_1(1-\epsilon_1)(1-\tau_0)}&
\k^{-1}_1\left( \theta_i\frac{\epsilon_1(\k_1^2-1)}{(1-\epsilon_1)(1-\tau_0)} - \frac{k_1^2\tau_0-1}{1-\tau_0} \right)
\end{bmatrix}.
$$
\end{lemma}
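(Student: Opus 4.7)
Following the template of the proof of Lemma \ref{[t0]_B}, the block-diagonal structure (\ref{MatForm}) of $[t_1]_\mcal{B}$ is already in hand, since $t_1$ commutes with $\A$ by \cite[Lemma 3.8]{IT} and Lemma \ref{E-spaceA} decomposes $\mcal{B}$ accordingly. Lemma \ref{e-vec;t1;1-dim} pins down the two corner $1\times 1$ blocks as $[\k_1]$ and $[\k_1^{-1}]$. What remains is to compute, for each $1\le i\le D-1$, the $2\times 2$ block $[t_1(i)]$ representing $t_1$ on $W_{\ell_i}=\mathrm{span}\{E_iv_0,E^\p_{i-1}v_0^\p\}$.

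The plan for the middle blocks is to exhibit two linearly independent eigenvectors of $t_1|_{W_{\ell_i}}$, one for each eigenvalue $\k_1^{\pm 1}$. The key observation is that, since $t_1$ stabilises $W_{\ell_i}$, the component of any global $t_1$-eigenvector along $W_{\ell_i}$ under the decomposition $\W=\bigoplus_r W_{\ell_r}$ is again a $t_1$-eigenvector of the same eigenvalue, provided it is nonzero. Applying this to the two global eigenvectors $\hat C^-_0+\hat C^+_0$ (eigenvalue $\k_1$) and $\tau_0\hat C^-_0+\hat C^+_0$ (eigenvalue $\k_1^{-1}$) from Lemma \ref{e-vec;t1}, and reading the $W_{\ell_i}$-components directly off (\ref{e-vec;t1;k1;i=0}) and (\ref{e-vec;t1;k1-;i=0}), one obtains the columns of
$$
P_i=\begin{bmatrix}
1+\dfrac{\epsilon_1\theta_i}{\epsilon_1-1} & \tau_0+\dfrac{\epsilon_1\theta_i}{\epsilon_1-1} \\[6pt]
\dfrac{1}{\xi_1(1-\epsilon_1)} & \dfrac{1}{\xi_1(1-\epsilon_1)}
\end{bmatrix}
$$
as the required eigenvectors of $[t_1(i)]$ for $\k_1$ and $\k_1^{-1}$, respectively.

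A direct computation gives $\det P_i=(1-\tau_0)/(\xi_1(1-\epsilon_1))$, which is nonzero by the $q$-Racah nondegeneracy assumptions in Example \ref{ex;q-racah;PA} (they force $\tau_0\ne 1$, via the identity $s^*(1-r_1q)(1-r_2q)-(r_1-s^*q)(r_2-s^*q)=(s^*-r_1r_2)(1-s^*q^2)$ together with $r_1r_2=ss^*q^{D+1}$). Hence $P_i$ is invertible and the diagonalisation equation $[t_1(i)]\,P_i=P_i\,\diag(\k_1,\k_1^{-1})$ determines
$$
[t_1(i)]=P_i\,\diag(\k_1,\k_1^{-1})\,P_i^{-1}.
$$
Expanding this $2\times 2$ product entrywise and simplifying (using $\k_1^{-1}(\k_1^2-1)=\k_1-\k_1^{-1}$ to match the off-diagonals) yields the stated formula. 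The main obstacle is purely the algebraic bookkeeping in this final simplification, together with the verification that the two projected vectors are indeed nonzero in $W_{\ell_i}$; both are routine and no conceptual difficulty is anticipated beyond carefully matching the four entries against the compact form given in the lemma.
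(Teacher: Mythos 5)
Your proof is correct and follows essentially the same route as the paper's: the paper proves this lemma "similar to Lemma \ref{[t0]_B}", i.e.\ by extracting the $W_{\ell_i}$-components of the two global $t_1$-eigenvectors from (\ref{e-vec;t1;k1;i=0}) and (\ref{e-vec;t1;k1-;i=0}) and solving $[t_1(i)]P_i=P_i\,\diag(\k_1,\k_1^{-1})$, exactly as you do. Your explicit verification that $\det P_i=(1-\tau_0)/(\xi_1(1-\epsilon_1))\ne 0$ is a detail the paper leaves implicit, and your entrywise expansion does reproduce the stated block.
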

\begin{proof}
Similar to the proof of Lemma \ref{[t0]_B}.
\end{proof}
\noindent
Based on our discussion so far, we find the matrix representation of $\Y$ relative to $\mcal{B}$.
\begin{lemma}\label{[Y]_B}
The matrix representing $\Y$ relative to $\mcal{B}$ is
$$
\bdiag\Big( [(sq)^{1/2}], [\Y(1)], [\Y(2)], \ldots [\Y(D-1)], [q^{-D}(sq)^{-1/2}] \Big),
$$
where for $1\leq i \leq D-1$
\begin{equation}\label{[Y(i)]}
[\Y(i)] =
\begin{bmatrix}
\k_0\k^{-1}_1\left( \frac{\k_1^2-\tau_0}{1-\tau_0}-\theta_i\frac{\epsilon_1(\k_1^2-1)}{(1-\epsilon_1)(1-\tau_0)} \right) &
\k_0\k^{-1}_1\frac{\xi_1(1-\k_1^2)(\epsilon_1\theta_i-1+\epsilon_1)(\epsilon_1\theta_i-\tau_0+\tau_0\epsilon_1)}{(1-\epsilon_1)(1-\tau_0)}\\
\k_0^{-1}\k^{-1}_1 \frac{(\k^2_1-1)}{\xi_1(1-\epsilon_1)(1-\tau_0)}&
\k_0^{-1}\k^{-1}_1\left( \theta_i\frac{\epsilon_1(\k_1^2-1)}{(1-\epsilon_1)(1-\tau_0)} - \frac{k_1^2\tau_0-1}{1-\tau_0} \right)
\end{bmatrix}.
\end{equation}
\end{lemma}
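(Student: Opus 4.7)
The plan is to obtain $[\Y]_{\mcal{B}}$ as the product $[t_0]_{\mcal{B}}[t_1]_{\mcal{B}}$ of the two matrix representations given in Lemmas \ref{[t0]_B} and \ref{[t1]_B}. By definition $\Y=t_0t_1$, so this product immediately yields the matrix representation of $\Y$ relative to $\mcal{B}$. Both factors are block diagonal with the same block pattern $(1,2,2,\ldots,2,1)$, so the product is block diagonal with blocks of the same sizes, and each block of $[\Y]_{\mcal{B}}$ is the product of the corresponding blocks of $[t_0]_{\mcal{B}}$ and $[t_1]_{\mcal{B}}$.

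First I would handle the two endpoint $1\times1$ blocks. By Lemma \ref{[t0]_B} both of them are $[\k_0]$, while by Lemma \ref{[t1]_B} the first is $[\k_1]$ and the last is $[\k_1^{-1}]$. Thus the endpoint blocks of $[\Y]_{\mcal{B}}$ are $[\k_0\k_1]$ and $[\k_0\k_1^{-1}]$. Using the definitions of $\k_0,\k_1$ from \eqref{k_n} together with the relation $r_1r_2=ss^*q^{D+1}$ from Example \ref{ex;q-racah;PA}, one checks
\[
\k_0\k_1=\left(\tfrac{r_1r_2}{s^*q^{D}}\right)^{1/2}=(sq)^{1/2},\qquad \k_0\k_1^{-1}=\left(\tfrac{s^*}{r_1r_2q^{D}}\right)^{1/2}=q^{-D}(sq)^{-1/2},
\]
which match the endpoint blocks in the claimed form of $[\Y]_{\mcal{B}}$. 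Here the signs of the square roots are pinned down by the conventions fixed in Note \ref{Note;S4}.

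For the interior $2\times2$ blocks ($1\leq i\leq D-1$), Lemma \ref{[t0]_B} gives $[t_0(i)]=\diag(\k_0,\k_0^{-1})$. Therefore
\[
[\Y(i)]=[t_0(i)][t_1(i)]=\diag(\k_0,\k_0^{-1})\,[t_1(i)],
\]
which is nothing but $[t_1(i)]$ with its first row multiplied by $\k_0$ and its second row multiplied by $\k_0^{-1}$. Inspecting the entries of $[t_1(i)]$ from Lemma \ref{[t1]_B} and performing these two row scalings reproduces the matrix $[\Y(i)]$ displayed in \eqref{[Y(i)]}. This completes the verification in every block.

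There is no conceptual obstacle here: the statement is a direct consequence of the matrix representations already established for $t_0$ and $t_1$, and the main thing to be careful about is the bookkeeping for the square-root conventions in the endpoint block identities (for which the fixed choices of Note \ref{Note;S4} ensure consistency). Accordingly I would present the proof simply as ``multiply the two block-diagonal matrices of Lemmas \ref{[t0]_B} and \ref{[t1]_B} block by block, and simplify the endpoint blocks using $r_1r_2=ss^*q^{D+1}$.''
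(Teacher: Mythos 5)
Your proposal is correct and follows exactly the paper's own argument: the paper likewise observes that $[\Y]_{\mcal{B}}=[t_0]_{\mcal{B}}[t_1]_{\mcal{B}}$ and reads off the blocks from Lemmas \ref{[t0]_B} and \ref{[t1]_B}. Your additional verification of the endpoint blocks via $r_1r_2=ss^*q^{D+1}$ and the row-scaling description of the interior blocks are accurate elaborations of the same computation.
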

\begin{proof}
Recall $\Y=t_0t_1$. The matrix representing $\Y$ relative to $\mcal{B}$ is 
the product of the matrix representing $t_0$ relative to $\mcal{B}$ and 
the matrix representing $t_1$ relative to $\mcal{B}$. By Lemma \ref{[t0]_B} 
and Lemma \ref{[t1]_B} the result follows.
\end{proof}

\begin{lemma}
The eigenvalues of $\Y$ are
$$
\begin{array}{llllllllll}
(sq)^{1/2}, & q(sq)^{1/2}, & q^2(sq)^{1/2}, & \ldots , & q^{D-1}(sq)^{1/2}, \\
& q^{-1}(sq)^{-1/2}, & q^{-2}(sq)^{-1/2}, & \ldots , & q^{1-D}(sq)^{-1/2}, &q^{-D}(sq)^{-1/2}.
\end{array}
$$
Therefore $\Y$ is multiplicity-free.
\end{lemma}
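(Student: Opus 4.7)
The plan is to exploit the block structure of $\Y$ produced by Lemma~\ref{[Y]_B} and bootstrap off the scalar action of $\A=\Y+\Y^{-1}$ on each eigenspace $W_{\ell_i}$.

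First I would read off the two $1\times 1$ blocks: they immediately contribute the eigenvalues $(sq)^{1/2}$ and $q^{-D}(sq)^{-1/2}$, matching the first element of the top row and the last element of the bottom row in the claimed list.

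Next, for each $2\times 2$ block $[\Y(i)]$ with $1\le i\le D-1$, I would invoke Lemma~\ref{e-val(A)} together with the observation that $\A$ acts as the scalar $\ell_i=q^i(sq)^{1/2}+q^{-i}(sq)^{-1/2}$ on $W_{\ell_i}$. Multiplying the identity $\Y+\Y^{-1}=\ell_i I$ on $W_{\ell_i}$ by $\Y$ yields
\[
\Y^{2}-\ell_i\Y+I=0 \qquad \text{on } W_{\ell_i},
\]
so the minimal polynomial of $[\Y(i)]$ divides
\[
y^{2}-\ell_i y+1=\bigl(y-q^i(sq)^{1/2}\bigr)\bigl(y-q^{-i}(sq)^{-1/2}\bigr).
\]
The two factors are distinct: equality would force $sq^{2i+1}=1$, which is prohibited by assumption~(ii) in Example~\ref{ex;q-racah;PA} since $2\le 2i+1\le 2D-1$.

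To upgrade this to equality of the minimal polynomial with the characteristic polynomial, I would rule out the possibility that $[\Y(i)]=\alpha I$ for one of the two roots $\alpha$. This is immediate from the explicit formula in Lemma~\ref{[Y]_B}: the $(2,1)$-entry of $[\Y(i)]$ is proportional to $\k_1^2-1$, which is nonzero by Lemma~\ref{kn;not=1}. Hence $[\Y(i)]$ is not scalar, its minimal polynomial has degree $2$, and it coincides with the characteristic polynomial $y^{2}-\ell_i y+1$. Therefore $[\Y(i)]$ has the two distinct eigenvalues $q^i(sq)^{1/2}$ and $q^{-i}(sq)^{-1/2}$.

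Collecting the eigenvalues from all blocks reproduces the displayed list. To conclude that $\Y$ is multiplicity-free I would check pairwise distinctness: within each of the two geometric progressions distinctness is immediate since $q$ is not a root of unity, while an equality $q^i(sq)^{1/2}=q^{-j}(sq)^{-1/2}$ with $0\le i\le D-1$ and $1\le j\le D$ would again force $sq^{i+j+1}=1$ with $2\le i+j+1\le 2D$, contradicting assumption~(ii) in Example~\ref{ex;q-racah;PA}. The only step requiring a tiny bit of care is the non-scalar argument for $[\Y(i)]$, and I expect it to be routine once the explicit $(2,1)$-entry is inspected; the rest is a direct book-keeping of the block decomposition.
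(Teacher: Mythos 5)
Your proof is correct, and it takes a somewhat different route from the paper's. The paper's own proof is a one-liner: it simply asserts that the eigenvalues are ``routinely obtained from Lemma~\ref{[Y]_B}'' (i.e.\ by computing the characteristic polynomial of each explicit $2\times 2$ block $[\Y(i)]$ directly) and that multiplicity-freeness follows from assumption~(ii) of Example~\ref{ex;q-racah;PA}. You instead sidestep the direct trace--determinant computation on the blocks: from Theorem~\ref{MainThm;JHL}(i) and Lemma~\ref{e-val(A)} you know $\A=\Y+\Y^{-1}$ acts as the scalar $\ell_i$ on $W_{\ell_i}$, so each block satisfies $y^2-\ell_i y+1=0$, whose roots factor as $q^i(sq)^{1/2}$ and $q^{-i}(sq)^{-1/2}$; you then only need Lemma~\ref{[Y]_B} for the block decomposition, the two $1\times 1$ entries, and the nonvanishing of the $(2,1)$-entry (via $\k_1^2\neq 1$ from Lemma~\ref{kn;not=1}) to rule out a scalar block. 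This buys a cleaner derivation that makes transparent \emph{why} the eigenvalues pair off into the reciprocal pattern $\lambda\lambda^{-1}\cdot sq=sq$, at the cost of one extra observation (non-scalarity) that the brute-force computation would render unnecessary. Your distinctness checks --- $sq^{2i+1}\neq 1$ within a block, $q$ not a root of unity within each geometric progression, and $sq^{i+j+1}\neq 1$ across the two progressions, all covered by assumption~(ii) since the relevant exponents lie in $\{2,\dots,2D\}$ --- are exactly what the paper's appeal to assumption~(ii) amounts to. No gaps.
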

\begin{proof}
The eigenvalues of $\Y$ are routinely obtained from Lemma \ref{[Y]_B}.
The last assertion follows from assumption (ii) in Example \ref{ex;q-racah;PA}.
\end{proof}

For notational convenience we abbreviate  
$$
\lambda_r = 
\begin{cases}
 q^r(sq)^{1/2} & (r=0,1,2,\ldots,D-1), \\
 q^r(sq)^{-1/2} & (r=-1,-2, \ldots, -D).
\end{cases}
$$
Note that  $\lambda^{-1}_i = \lambda_{-i}$ for $i=\pm1,\pm2, \ldots, \pm(D-1)$ 
and $\ell_j = \lambda_j+\lambda_{-j}$ for $0 \leq j \leq D$.
We will find an eigenvector of $\Y$ associated with $\lambda_r (-D \leq r \leq D-1)$, and express as a linear combination of elements of $\mcal{B}$. 

\begin{lemma}\label{pre:e-vec;Y}
With the above notation, 
there exist an eigenvector $\overline{\y}_i$ of $\Y$ associated with the eigenvalue
$\lambda_i$ such that for $i\in \{0,D\}$ 
$$
\overline{\y}_0 = E_0v_0, \qquad \qquad \overline{\y}_{-D}= E_Dv_0,
$$
and for $1 \leq i \leq D-1$
\begin{equation*}
\overline{\y}_{-i} = \omega_{-i}E_iv_0 + E^\p_{i-1}v_0^\p, 
\qquad \qquad 
\overline{\y}_{i} = \omega_iE_iv_0 + E^\p_{i-1}v_0^\p,
\end{equation*}
where
\begin{align}
\nonumber
\omega_{-i} & = m\left(q^{-i}+\frac{\theta_i}{h(q^{-D}-1)} + \frac{r_1r_2(1-r_1q)(1-r_2q)-(r_1-s^*q)(r_2-s^*q)}{(r_1r_2-s^*)(1-s^*q^2)}\right),\\
\nonumber
\omega_{i} &= m\left(sq^{i+1}+\frac{\theta_i}{h(q^{-D}-1)} + \frac{r_1r_2(1-r_1q)(1-r_2q)-(r_1-s^*q)(r_2-s^*q)}{(r_1r_2-s^*)(1-s^*q^2)}\right),
\end{align}
and $m=\tfrac{s^*(1-q^{-D})(1-s^*q^2)(1-s^*q^3)}{(r_1-s^*q)(r_2-s^*q)}$.

\end{lemma}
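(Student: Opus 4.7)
The plan is to read off the eigenvectors of $\Y$ directly from the block diagonal form given in Lemma \ref{[Y]_B}, and then translate the coordinate vectors back to elements of the basis $\mcal B$.

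First I handle the two boundary eigenvalues. The $1\times 1$ blocks in Lemma \ref{[Y]_B} act on $E_0v_0$ and $E_Dv_0$, and by Lemmas \ref{1-dim;t0,D} and \ref{e-vec;t1;1-dim} together with $\Y=t_0t_1$, one has
\begin{equation*}
\Y.E_0v_0=\k_0\k_1\,E_0v_0,\qquad \Y.E_Dv_0=\k_0\k_1^{-1}\,E_Dv_0.
\end{equation*}
Using (\ref{k_n}) and $r_1r_2=ss^*q^{D+1}$ one checks $\k_0\k_1=(sq)^{1/2}=\lambda_0$ and $\k_0\k_1^{-1}=q^{-D}(sq)^{-1/2}=\lambda_{-D}$, so taking $\overline{\y}_0:=E_0v_0$ and $\overline{\y}_{-D}:=E_Dv_0$ handles these cases.

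For $1\le i\le D-1$ the relevant block $[\Y(i)]$ in (\ref{[Y(i)]}) is a $2\times 2$ matrix acting on the ordered basis $\{E_iv_0,\, E^\p_{i-1}v_0^\p\}$. The preceding lemma tells us that its two eigenvalues are precisely $\lambda_i$ and $\lambda_{-i}$. For each eigenvalue $\lambda_r$ ($r=\pm i$) I normalize the eigenvector so that its second coordinate equals $1$; then the first coordinate $\omega_r$ is determined by either row of the eigenvalue equation $\bigl([\Y(i)]-\lambda_r I\bigr)(\omega_r,1)^t=0$. The cleanest choice is the second row, which yields
\begin{equation*}
\omega_r=\frac{\xi_1(1-\epsilon_1)(1-\tau_0)}{\k_1^2-1}\left(\k_0\k_1\lambda_r-\tht_i\frac{\epsilon_1(\k_1^2-1)}{(1-\epsilon_1)(1-\tau_0)}+\frac{\k_1^2\tau_0-1}{1-\tau_0}\right).
\end{equation*}
I then substitute $\lambda_{-i}=q^{-i}(sq)^{-1/2}$ or $\lambda_i=q^i(sq)^{1/2}$ into this expression and simplify using (\ref{xi;epsilon}), (\ref{k_n}), the identity $r_1r_2=ss^*q^{D+1}$, and the explicit formula for $\tau_0$ from Lemma \ref{e-vec;t1}. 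Setting $\overline{\y}_r:=\omega_r E_iv_0+E^\p_{i-1}v_0^\p$ then gives the claimed eigenvector.

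The main obstacle is purely computational: the expression for $\omega_r$ above mixes the scalars $\xi_1,\epsilon_1,\tau_0,\k_0,\k_1$ in a rather intricate way, and the goal is to extract from it the common factor $m=s^*(1-q^{-D})(1-s^*q^2)(1-s^*q^3)/\bigl((r_1-s^*q)(r_2-s^*q)\bigr)$ and the particular shift $\tht_i/\bigl(h(q^{-D}-1)\bigr)$ that distinguishes $\lambda_{-i}$ (contributing the summand $q^{-i}$) from $\lambda_i$ (contributing $sq^{i+1}$), while all the remaining terms collapse to the $i$-independent constant $\bigl(r_1r_2(1-r_1q)(1-r_2q)-(r_1-s^*q)(r_2-s^*q)\bigr)/\bigl((r_1r_2-s^*)(1-s^*q^2)\bigr)$. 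This simplification is somewhat delicate; to check it I would verify the two cases $r=\pm i$ separately (noting that in both cases only the $\lambda_r$ summand depends on the sign of $r$, while everything else is the same) and use the $q$-Racah formula (\ref{theta_i}) to rewrite $\tht_i$ when convenient. Once $\omega_r$ is in the stated form for one value of $r$ in $\{i,-i\}$, the other follows by inspection.
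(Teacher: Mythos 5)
Your proposal is correct and follows essentially the same route as the paper: both read the eigenvectors off the block-diagonal form of $\Y$ from Lemma \ref{[Y]_B}, solve the $2\times 2$ eigenvalue equations $[\Y(i)][\overline{\y}_{\pm i}]=\lambda_{\pm i}[\overline{\y}_{\pm i}]$ with the second coordinate normalized to $1$, and simplify; the boundary cases are handled by the $1\times1$ blocks. The only cosmetic difference is that you verify the boundary eigenvalues directly via $\k_0\k_1^{\pm1}$, whereas the paper appeals to $\A$ and $\Y$ sharing eigenvectors; both are valid.
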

\begin{proof}
Note that $\A$ and $\Y$ share common eigenvectors.
Without loss of generality one can choose 
$\overline{\y}_0 = E_0v_0$ and $\overline{\y}_{-D}=E_Dv_0$.
Let $1 \leq i \leq D-1$. Since $\A=\Y+\Y^{-1}$, each of $\overline{\y}_i, \overline{\y}_{-i}$ is an eigenvector of $\A$ associated with $\ell_i$. 
So $\overline{\y}_i, \overline{\y}_{-i}\in W_{\ell_i}$.
Let $[\overline{\y}_i]$ (resp. $[\overline{\y}_{-i}]$) denote the coordinate vector of $\overline{\y}_i$ (resp. $\overline{\y}_{-i}$) relative to $\{E_iv_0, E_{i-1}^{\p}v_0^\p\}$. 
It suffices to find the vectors $[\overline{\y}_i]$ and $[\overline{\y}_{-i}]$. 
By Lemma \ref{[Y]_B} we have
$$
[\Y(i)][\overline{\y}_i] = \lambda_i[\overline{\y}_i] \qquad \text{\rm and} \qquad [\Y(i)][\overline{\y}_{-i}] = \lambda_{-i}[\overline{\y}_{-i}].
$$ 
Evaluate the above equations using (\ref{[Y(i)]}) and 
simplify the vectors $[\overline{\y}_{i}],[\overline{\y}_{-i}]$. The result follows.
\end{proof}
\noindent
We normalize the vectors $\{\overline{\y}_i\}^{D-1}_{i=-D}$ so that the sum of these vectors is equal to $v_0$. 
For $1 \leq i \leq D-1$ set
\begin{align}
\label{ev;y;-i}
&\y_{-i}  := -\frac{q^i}{m(sq^{2i+1}-1)}\overline{\y}_{-i} = -\frac{q^i\omega_{-i}}{m(sq^{2i+1}-1)}E_iv_0 
- \frac{q^i}{m(sq^{2i+1}-1)}E^\p_{i-1}v^\p_0, \\
\label{ev;y;i}
&\y_i  := \frac{q^i}{m(sq^{2i+1}-1)}\overline{\y}_i = \frac{q^i\omega_i}{m(sq^{2i+1}-1)}E_iv_0 + \frac{q^i}{m(sq^{2i+1}-1)}E^\p_{i-1}v^\p_0.
\end{align}
Set
\begin{equation}\label{ev;y;0,D}
\y_0 := \overline{\y}_0=E_0v_0, \qquad \qquad \y_{-D} := \overline{\y}_{-D}=E_Dv_0.
\end{equation}
\begin{theorem}\label{e-vec;Y}
With reference to the notation {\rm (\ref{ev;y;-i})--(\ref{ev;y;0,D})},
each $\y_i$ is an eigenvector of $\Y$ associated with $\lambda_i$,
and
\begin{equation}\label{sum;y;v0}
\sum^{D-1}_{i=-D}\y_i = v_0.
\end{equation}
\end{theorem}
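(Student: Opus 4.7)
The first assertion is immediate from Lemma \ref{pre:e-vec;Y}: for each $i$ with $-D\le i\le D-1$, the vector $\y_i$ is defined as a nonzero scalar multiple of $\overline{\y}_i$, so it remains an eigenvector of $\Y$ with the same eigenvalue $\lambda_i$. The content of the theorem is therefore the normalization identity (\ref{sum;y;v0}), and the plan is to prove it by a direct computation that splits the sum into three pieces: the singletons $\y_0$, $\y_{-D}$, and the pairs $\y_i+\y_{-i}$ for $1\le i\le D-1$.

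First, I will group the sum as
\begin{equation*}
\sum^{D-1}_{i=-D}\y_i=\y_0+\y_{-D}+\sum^{D-1}_{i=1}\bigl(\y_i+\y_{-i}\bigr).
\end{equation*}
By (\ref{ev;y;0,D}) we already have $\y_0=E_0v_0$ and $\y_{-D}=E_Dv_0$. For each $1\le i\le D-1$, adding the expressions in (\ref{ev;y;-i}) and (\ref{ev;y;i}), the $E^\p_{i-1}v_0^\p$ contributions cancel because they appear with opposite signs, and one obtains
\begin{equation*}
\y_i+\y_{-i}=\frac{q^i(\omega_i-\omega_{-i})}{m(sq^{2i+1}-1)}\,E_iv_0.
\end{equation*}

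The heart of the proof is then to show that the scalar multiplying $E_iv_0$ is exactly $1$. From the explicit formulas for $\omega_{\pm i}$ in Lemma \ref{pre:e-vec;Y}, the two long auxiliary terms (the one involving $\theta_i$ and the one involving $r_1,r_2,s^*$) are identical in $\omega_i$ and $\omega_{-i}$, so they cancel in the difference. What remains is
\begin{equation*}
\omega_i-\omega_{-i}=m\bigl(sq^{i+1}-q^{-i}\bigr)=mq^{-i}(sq^{2i+1}-1),
\end{equation*}
and substituting this collapses the prefactor to $1$, giving $\y_i+\y_{-i}=E_iv_0$. Combining these pieces,
\begin{equation*}
\sum^{D-1}_{i=-D}\y_i=E_0v_0+E_Dv_0+\sum^{D-1}_{i=1}E_iv_0=\sum^{D}_{r=0}E_rv_0=v_0,
\end{equation*}
where the last equality uses $\sum^{D}_{r=0}E_r=I$ (acting on $v_0$).

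There is no real obstacle here: the eigenvector claim is formal, and the normalization reduces to a single cancellation in $\omega_i-\omega_{-i}$. The only thing to watch carefully is the bookkeeping of the parameter $m$ and the factor $q^i/(sq^{2i+1}-1)$ used in (\ref{ev;y;-i})--(\ref{ev;y;i}); these were chosen precisely so that the difference $\omega_i-\omega_{-i}$ produces the reciprocal factor $mq^{-i}(sq^{2i+1}-1)$, making the sum telescope onto the spectral decomposition of $v_0$ in the eigenbasis of $A$.
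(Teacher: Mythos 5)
Your proof is correct and follows essentially the same route as the paper: both reduce the theorem to the normalization identity, group the sum as $\y_0+\y_{-D}+\sum_{i=1}^{D-1}(\y_i+\y_{-i})$, and use the cancellation of the $E^\p_{i-1}v_0^\p$ terms together with $\omega_i-\omega_{-i}=mq^{-i}(sq^{2i+1}-1)$ to collapse each pair to $E_iv_0$. You actually spell out the key cancellation more explicitly than the paper does, but the argument is the same.
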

\begin{proof} It suffices to show (\ref{sum;y;v0}). 
Evaluate the right-hand side in (\ref{sum;y;v0}) using Lemma \ref{pre:e-vec;Y} together with (\ref{ev;y;-i})--(\ref{ev;y;0,D}). Then
$$
\sum^{D-1}_{i=-D}\y_i  = \y_0+ \sum^{D-1}_{i=0}(\y_i + \y_{-i}) + \y_{-D} 
= E_0v_0 + \sum^{D-1}_{i=0}\frac{q^i(\overline{\y}_i-\overline{\y}_{-i})}{m(sq^{2i+1}-1)} + E_Dv_0
=\sum^D_{i=0}E_iv_0=v_0,
$$
as required.
\end{proof}
\noindent
For the rest of this paper we fix the eigenvectors $\{\y_i\}^{D-1}_{i=-D}$ of $\Y$ 
as in Theorem \ref{e-vec;Y}. Let $W_{\lambda_i}$ denote the eigenspace of 
$\Y$ for $\lambda_i$. Observe that $\W = \bigoplus^{D-1}_{i=-D}W_{\lambda_i}$, 
an orthogonal direct sum. Recall the Hermitian inner product $\<\cdot,\cdot\>_V$
from the first paragraph in Section \ref{S3;DRG}.
We find the norm of $\y_i$ for $-D \leq i \leq D-1$. 

\begin{proposition}\label{norm;y}
For $1\leq i \leq D-1$,
\begin{align*}
& \Vert \y_{-i}\Vert^2 = \frac{q^{2i}\omega^2_{-i}}{m^2(sq^{2i+1}-1)^2}k^*_i\nu^{-1} +  
\frac{q^{2i}}{m^2(sq^{2i+1}-1)^2} k_{i-1}^{*\p}\nu^{\p-1}\Vert v_0^\p \Vert^2,\\
& \Vert \y_i \Vert^2 = \frac{q^{2i}\omega^2_{i}}{m^2(sq^{2i+1}-1)^2}k^*_i\nu^{-1} + 
\frac{q^{2i}}{m^2(sq^{2i+1}-1)^2}k_{i-1}^{*\p}\nu^{\p-1}\Vert v_0^\p\Vert^2,
\end{align*}
where $k^*_i$, $k^{*\p}_i$, $\nu$, $\nu^\p$ are from Lemma {\rm\ref{k;kp;nu;nup}},
$m$ is from Lemma {\rm\ref{pre:e-vec;Y}}, and 
\begin{equation} \label{xp;norm} 
\begin{split}
\Vert v_0^\p\Vert^2 = 
\frac{s^*(1-q^D)(1-q^{1-D})(1-s^*q^2)(1-s^*q^3)(1-r_1q)(1-r_2q)}{q^Dr_1r_2(1-s^*q/r_1)(1-s^*q/r_2)}.
\end{split}
\end{equation}
Moreover,
$$
\Vert \y_0\Vert^2 = \nu^{-1}, \qquad \qquad \Vert \y_{-D}\Vert^2 = k^*_D\nu^{-1}.
$$
\end{proposition}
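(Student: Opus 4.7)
The plan is to exploit the orthogonal decomposition $\W = \Mx \oplus \Mxp$ from \cite[Section~5]{JHL}. Since $E_iv_0 \in \Mx$ and $E^\p_{i-1}v^\p_0 \in \Mxp$ are orthogonal with respect to $\<\cdot,\cdot\>_V$, the explicit expressions (\ref{ev;y;-i})--(\ref{ev;y;i}) will yield, for $1 \leq i \leq D-1$,
\begin{equation*}
\Vert \y_{\pm i}\Vert^2 = \frac{q^{2i}\omega_{\pm i}^2}{m^2(sq^{2i+1}-1)^2}\Vert E_iv_0\Vert^2 + \frac{q^{2i}}{m^2(sq^{2i+1}-1)^2}\Vert E^\p_{i-1}v^\p_0\Vert^2,
\end{equation*}
so the proof reduces to identifying the two summand norms and computing $\Vert v^\p_0\Vert^2$.

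For the first summand, I would use that $E_i$ is real symmetric (hence an orthogonal projection), and that by distance-regularity $(E_i)_{yy}$ is independent of $y$; thus $|X|(E_i)_{xx} = \mathrm{tr}(E_i) = k^*_i$. Combined with $\nu^{-1} = (E_0)_{xx} = 1/|X|$ from Lemma \ref{k;kp;nu;nup}(iii), this gives
\begin{equation*}
\Vert E_iv_0\Vert^2 = \<E_i\hat{x},\hat{x}\>_V = (E_i)_{xx} = k^*_i\nu^{-1},
\end{equation*}
which also delivers $\Vert\y_0\Vert^2$ and $\Vert\y_{-D}\Vert^2$ directly. For the second summand I would first observe that $v^\p_0 \in E^{*\p}_0\Mxp$: since $\hat{C}^+_0$ and $\hat{C}^-_1$ are both supported on $\Ga_1(x)$, so is $v^\p_0$, and $E^{*\p}_0$ is the restriction of $E^*_1$ to $\Mxp$. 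Applying the same reasoning to the Leonard system $\Phi^\p$ on $\Mxp$ with $v^\p_0/\Vert v^\p_0\Vert$ in the role of $\hat{x}$, and invoking Lemma \ref{k;kp;nu;nup}(ii),(iv), will then give $\Vert E^\p_{i-1}v^\p_0\Vert^2 = k^{*\p}_{i-1}\nu^{\p-1}\Vert v^\p_0\Vert^2$.

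It remains to verify (\ref{xp;norm}). From (\ref{basis;v_perp}) I have $v^\p_0 = \xi_1\hat{C}^+_0 + \xi_1\epsilon_1\hat{C}^-_1$, and since $\hat{C}^+_0$ and $\hat{C}^-_1$ have disjoint supports,
\begin{equation*}
\Vert v^\p_0\Vert^2 = \xi_1^2\Vert\hat{C}^+_0\Vert^2 + \xi_1^2\epsilon_1^2\Vert\hat{C}^-_1\Vert^2.
\end{equation*}
I would substitute $\xi_1,\epsilon_1$ from (\ref{xi;epsilon}) and the $i=0,1$ values from Lemma \ref{norm;C;pm}, and then simplify by standard $q$-Pochhammer manipulation to recover (\ref{xp;norm}). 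The main obstacle I anticipate is the perpendicular analogue $\Vert E^\p_{i-1}v^\p_0\Vert^2 = k^{*\p}_{i-1}\nu^{\p-1}\Vert v^\p_0\Vert^2$: transporting the primary-module identity through the identification of $\Phi^\p$ as a Leonard system on $\Mxp$ requires care because $v^\p_0$ is not a coordinate vector but a specific linear combination, so the rescaling by $\Vert v^\p_0\Vert^2$ must be tracked through the norms; the ensuing $q$-Pochhammer simplification is routine but tedious.
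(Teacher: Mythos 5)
Your overall strategy coincides with the paper's: split $\Vert\y_{\pm i}\Vert^2$ along the orthogonal decomposition $\W=\Mx\oplus\Mxp$ using (\ref{ev;y;-i})--(\ref{ev;y;i}), identify $\Vert E_iv_0\Vert^2$ and $\Vert E^\p_{i-1}v^\p_0\Vert^2$, and compute $\Vert v^\p_0\Vert^2$ separately. Your treatment of the first summand (the trace argument $(E_i)_{xx}=\mathrm{tr}(E_i)/|X|=k^*_i\nu^{-1}$, using that $E_i$ is a polynomial in $A$ and hence has constant diagonal) is a correct, self-contained replacement for the paper's citation of \cite[Theorem 15.3]{PT:Madrid}, and it does give $\Vert\y_0\Vert^2$ and $\Vert\y_{-D}\Vert^2$ immediately. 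Your direct computation of $\Vert v^\p_0\Vert^2=\xi_1^2\Vert\hat{C}^+_0\Vert^2+\xi_1^2\epsilon_1^2\Vert\hat{C}^-_1\Vert^2$ from (\ref{basis;v_perp}) and Lemma \ref{norm;C;pm} is also a legitimate alternative to the paper's appeal to \cite[Lemma 6.14]{JHL}.

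The one genuine gap is exactly the step you flagged: the perpendicular summand. ``The same reasoning'' does not transport, because your argument for $\Vert E_iv_0\Vert^2$ is combinatorial --- it rests on $\hat{x}$ being a coordinate vector and on $E_i$ having constant diagonal as a matrix in $\MX$ --- and there is no analogous combinatorial structure on $\Mxp$: the vector $v^\p_0$ is a prescribed linear combination of characteristic vectors, and $E^\p_{i-1}$ is only the restriction of $E_i$ to $\Mxp$, so no trace-over-vertices argument is available. The statement you need is the purely Leonard-theoretic one: for a Leonard system realized by self-adjoint matrices on an inner product space, and any nonzero $u\in {\sf E}^*_0{\sf V}$, one has $\<{\sf E}_iu,{\sf E}_ju\>=\delta_{ij}k^*_i\nu^{-1}\Vert u\Vert^2$. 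This is \cite[Theorem 15.3]{PT:Madrid}; applied to $\Phi^\p$ with $u=v^\p_0$ (which lies in $E^{*\p}_0\Mxp$, as you correctly observe from the support of $\hat{C}^+_0$ and $\hat{C}^-_1$) it yields $\Vert E^\p_{i-1}v^\p_0\Vert^2=k^{*\p}_{i-1}\nu^{\p-1}\Vert v^\p_0\Vert^2$ with no further bookkeeping --- the factor $\Vert v^\p_0\Vert^2$ is already built into the theorem. This is precisely how the paper closes the argument (it cites the same theorem for both $\Phi$ and $\Phi^\p$); once you invoke it in place of ``the same reasoning,'' your proof is complete.
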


\begin{proof} 
Recall from \cite[Theorem 15.3]{PT:Madrid} that
\begin{equation}\label{thm15.3}
\< E_iv_0, E_jv_0\>_V =\delta_{ij}k^*_i\nu^{-1}\Vert v_0\Vert^2 \qquad (0 \leq i, j \leq D-1).
\end{equation}
Let $1 \leq i \leq D-1$. We first find $\Vert \y_{-i}\Vert^2$. By (\ref{ev;y;-i}), we have
\begin{align*}
\Vert \y_{-i}\Vert^2 & = \left\< -\frac{q^i\omega_{-i}}{m(sq^{2i+1}-1)}E_iv_0-\frac{q^i}{m(sq^{2i+1}-1)} E_{i-1}^\p v_0^\p,
-\frac{q^i\omega_{-i}}{m(sq^{2i+1}-1)}E_iv_0-\frac{q^i}{m(sq^{2i+1}-1)} E_{i-1}^\p v_0^\p\right\>_V \\
&=\frac{q^{2i}\omega^2_{-i}}{m^2(sq^{2i+1}-1)^2}\Vert E_iv_0\Vert^2 + 
\frac{q^{2i}}{m^2(sq^{2i+1}-1)^2}\Vert E_{i-1}^\p v_0^\p\Vert^2 \\
&= \frac{q^{2i}\omega^2_{-i}}{m^2(sq^{2i+1}-1)^2}k^*_i\nu^{-1}\Vert v_0\Vert^2 + 
\frac{q^{2i}}{m^2(sq^{2i+1}-1)^2}k^{*\p}_{i-1}\nu^{\p-1}\Vert v_0^\p\Vert^2,
\end{align*}
where the last equation is from (\ref{thm15.3}). Recall that $\Vert v_0 \Vert^2=1$. The line (\ref{xp;norm}) is obtained from \cite[Lemma 6.14]{JHL} together with Lemma \ref{norm;C;pm}. Similarly we obtain the norm of $\y_i$.
\end{proof}

\section{Orthogonality for $\varepsilon^{\pm}_i$}\label{section;Orthogonality}
Recall the nonsymmetric Laurent polynomials $\{\varepsilon^{\pm}_i\}^{D-1}_{i=0}$
from Definition \ref{e;s,r,h} and recall the subspace $L$ of $\C[y,y^{-1}]$ 
spanned by $\{\varepsilon^{\pm}_i\}^{D-1}_{i=0}$ from Remark \ref{H-mod;L}.
In this section we define a bilinear form on $L$ that satisfies the
orthogonality relations for $\{\varepsilon^{\pm}_i\}^{D-1}_{i=0}$.
Recall the bilinear form $\< \cdot, \cdot \>_V$ 
and the $\H$-module $\W(s,s^*,r_1,r_2,D;q)$.

\begin{lemma} \label{antiauto;bilinear;W}
Let the antiautomorphism $\dagger$ be as in Lemma {\rm \ref{antiauto;Hq}}. For $h \in \H$ and $u,v \in \W(s,s^*,r_1,r_2,D;q)$,
$$
\<h.u,v\>_V = \<u,h^{\dagger}.v\>_V.
$$
\end{lemma}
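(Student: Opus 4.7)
The plan is to reduce the identity to a check on the generators $t_n$, $n \in \I$, of $\H$, and then verify those cases using the explicit matrix representations in Lemma~\ref{Hq-mod;abcd}. Since $\dagger$ is an antiautomorphism, once the identity is known for $h=h_1, h_2$ the computation
\[
\<h_1h_2.u,v\>_V = \<h_2.u, h_1^{\dagger}.v\>_V = \<u, h_2^{\dagger}h_1^{\dagger}.v\>_V = \<u, (h_1h_2)^{\dagger}.v\>_V
\]
extends it to the product; by induction on word length in the $t_n$, the general case of $h\in\H$ follows. So the first step is to establish $\<t_n.u,v\>_V = \<u, t_n^{\dagger}.v\>_V$ for each $n\in\I$.

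Next, I would exploit Hermitian symmetry to halve the work. Because $\<\cdot,\cdot\>_V$ is Hermitian and $\dagger^2=1$ (Lemma~\ref{antiauto;Hq}), conjugating the identity for $t_n$ yields
\[
\<t_n^{\dagger}.w,z\>_V \;=\; \overline{\<z, t_n^{\dagger}.w\>_V} \;=\; \overline{\<t_n.z, w\>_V} \;=\; \<w, t_n.z\>_V,
\]
which is the identity for $t_n^{\dagger}$. Since $t_0^{\dagger}=t_1$ and $t_2^{\dagger}=t_3$, only the two cases
\[
\<t_0.u, v\>_V = \<u, t_1.v\>_V, \qquad \<t_2.u, v\>_V = \<u, t_3.v\>_V
\]
need to be verified directly.

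For each of these, I would check the identity on basis pairs $u,v\in\{\hat{C}^{\pm}_i\}^{D-1}_{i=0}$. These form an orthogonal basis for $\W$ whose norms are given by Lemma~\ref{norm;C;pm} (indeed the positive integers $|C^{\pm}_i|$). In the $Q$-polynomial distance-regular graph setting the scalars $q,s,s^*,r_1,r_2$ are real, so all entries of the matrices $\T_n$ in Lemma~\ref{Hq-mod;abcd} are real. Letting $G$ denote the diagonal Gram matrix of this basis and $n^{\dagger}\in\I$ the index with $t_n^{\dagger}=t_{n^{\dagger}}$, the identity translates to the matrix equation
\[
\T_n^{\,t}\,G \;=\; G\,\T_{n^{\dagger}},
\]
or entrywise, for each pair of basis labels $\alpha,\beta\in\{\pm\}\times\{0,\ldots,D-1\}$,
\[
(\T_n)_{\alpha\beta}\,\Vert\hat{C}_{\beta}\Vert^2 \;=\; \Vert\hat{C}_{\alpha}\Vert^2\,(\T_{n^{\dagger}})_{\beta\alpha}.
\]
Since the $\T_n$ are block diagonal with $2\times 2$ (and a few $1\times 1$) blocks, most of these scalar equations are $0=0$ and only finitely many nontrivial algebraic identities in $a,b,c,d,q$ remain.

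The main obstacle is bookkeeping, not algebra. The $\T_0$-blocks are supported on the pairs $\{\hat{C}^+_{i-1},\hat{C}^-_i\}$ while the $\T_1$-blocks are supported on $\{\hat{C}^-_i,\hat{C}^+_i\}$, and similarly $\T_2$ versus $\T_3$; hence the nonzero entries of $\T_n$ and $\T_{n^{\dagger}}$ overlap in a shifted pattern, and one must carefully match each entry of $\T_n$ with the appropriate entry of $\T_{n^{\dagger}}$ scaled by the Gram ratio $\Vert\hat{C}^{\sigma}_j\Vert^2/\Vert\hat{C}^{\tau}_i\Vert^2$. Once this matching is written out, each surviving identity is an elementary algebraic consequence of the explicit formulas for $\tau_n(i)$ in Lemma~\ref{Hq-mod;abcd} together with the norm formulas in Lemma~\ref{norm;C;pm}, and the verification is routine.
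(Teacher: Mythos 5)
Your proposal is correct and takes essentially the same route as the paper, whose proof simply says to routinely check $\<t^{\delta}_n.\hat{C}^{\sigma}_i,\hat{C}^{\tau}_j\>_V=\<\hat{C}^{\sigma}_i,t^{\delta\dagger}_n.\hat{C}^{\tau}_j\>_V$ on the orthogonal basis and extend by antimultiplicativity; your extra reductions (halving the generator checks via Hermitian symmetry and $\dagger^2=1$, and packaging the basis check as $\T_n^{\,t}G=G\,\T_{n^{\dagger}}$) are sound refinements. Two small repairs: the induction should run over words in $t_n^{\pm1}$ (the case $h=t_n^{-1}$ follows formally from the case $h=t_n$ since $(t_n^{-1})^{\dagger}=(t_n^{\dagger})^{-1}$), and in your entrywise equation the factors $\Vert\hat{C}_{\alpha}\Vert^2$ and $\Vert\hat{C}_{\beta}\Vert^2$ should be interchanged so that it agrees with $\T_n^{\,t}G=G\,\T_{n^{\dagger}}$.
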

\begin{proof}
Let $0 \leq i, j \leq D-1$.
Routinely check 
$$\<t^{\delta}_n.\hat{C}^{\sigma}_i,\hat{C}^{\tau}_j\>_V=\<\hat{C}^{\sigma}_i,t^{\delta \dagger}_n.\hat{C}^{\tau}_j\>_V,$$ 
for $n\in \I$ and $\sigma, \tau, \delta \in \{+,-\}$.
Since $\{t^{\pm1}_n\}_{n \in \I}$ generates $\H$, the result follows.
\end{proof}
\begin{lemma}\label{bilinear;L1,2}
For $-D \leq i \leq D-1$, let $\y_i$ be an eigenvector of $\Y$ for the eigenvalue $\lambda_i$ as in Theorem {\rm \ref{e-vec;Y}}. For $L_1,L_2 \in L$,
$$
\<L_1[\Y].v_0, L_2[\Y].v_0\>_V = \sum^{D-1}_{i=-D}L_1[\lambda_i]\overline{L_2[\lambda_i]}\Vert \y_i\Vert^2,
$$
where the norm $\Vert \y_i \Vert^2$ is given by Proposition {\rm \ref{norm;y}}.
\end{lemma}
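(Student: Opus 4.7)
The plan is to reduce the identity to a spectral decomposition of $\Y$ together with the orthogonality of its eigenvectors under $\<\cdot,\cdot\>_V$. Theorem \ref{e-vec;Y} furnishes the decomposition $v_0=\sum_{i=-D}^{D-1}\y_i$ with $\Y.\y_i=\lambda_i\y_i$. Since each $t_n$ is invertible in $\H$ by Definition \ref{Def;UDAHA}, $\Y$ is invertible, so for every $L\in L$ the operator $L[\Y]$ on $\W$ is well-defined and satisfies $L[\Y].\y_i=L[\lambda_i]\y_i$. Hence
$$
L_j[\Y].v_0 = \sum_{i=-D}^{D-1} L_j[\lambda_i]\,\y_i \qquad (j=1,2),
$$
and substituting into $\<\cdot,\cdot\>_V$ and using sesquilinearity gives
$$
\<L_1[\Y].v_0, L_2[\Y].v_0\>_V = \sum_{i,j=-D}^{D-1} L_1[\lambda_i]\,\overline{L_2[\lambda_j]}\,\<\y_i,\y_j\>_V.
$$
Thus the whole claim reduces to establishing the orthogonality relations $\<\y_i,\y_j\>_V=\delta_{ij}\Vert\y_i\Vert^2$.

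To verify these I would combine $\Y^{\dagger}=\Y$ from (\ref{X,Y;dagger}) with Lemma \ref{antiauto;bilinear;W} to obtain
$$
\lambda_i\<\y_i,\y_j\>_V = \<\Y.\y_i,\y_j\>_V = \<\y_i,\Y.\y_j\>_V = \overline{\lambda_j}\<\y_i,\y_j\>_V,
$$
so $(\lambda_i-\overline{\lambda_j})\<\y_i,\y_j\>_V=0$. Setting $i=j$ with $\y_i\ne 0$ first forces $\lambda_i=\overline{\lambda_i}$, so every $\lambda_i$ is real; combining this with the multiplicity-freeness of $\Y$ established at the end of Section 8 then yields $\lambda_i\ne\lambda_j=\overline{\lambda_j}$ whenever $i\ne j$, whence $\<\y_i,\y_j\>_V=0$. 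Retaining only the diagonal contributions of the double sum produces exactly the right-hand side of the lemma.

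The main step is the orthogonality of the eigenvector basis $\{\y_i\}_{i=-D}^{D-1}$, but the calculation above shows it follows directly from the self-adjointness of $\Y$ with respect to $\<\cdot,\cdot\>_V$ together with its spectral simplicity, so no further information about the explicit formulas for $\y_i$ in Theorem \ref{e-vec;Y} is needed at this stage; those formulas enter only when one computes the actual norms $\Vert\y_i\Vert^2$, which has already been carried out in Proposition \ref{norm;y}. Accordingly the proof is essentially routine diagonalization.
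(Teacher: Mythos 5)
Your proof is correct and follows essentially the same route as the paper: expand $v_0=\sum_i\y_i$ via Theorem \ref{e-vec;Y}, let $L_j[\Y]$ act eigenvalue-wise, and collapse the resulting double sum using orthogonality of the $\y_i$. The only difference is that you additionally derive $\<\y_i,\y_j\>_V=\delta_{ij}\Vert\y_i\Vert^2$ from $\Y^{\dagger}=\Y$ together with Lemma \ref{antiauto;bilinear;W} and the multiplicity-freeness of $\Y$, whereas the paper simply asserts that $\{\y_i\}_{i=-D}^{D-1}$ is an orthogonal basis; your self-adjointness argument is sound and in fact supplies the justification (notably the orthogonality of $\y_i$ and $\y_{-i}$ inside the two-dimensional eigenspace $W_{\ell_i}$ of $\A$) that the paper leaves implicit.
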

\begin{proof}
Compute
\begin{align*}
\<L_1[\Y].v_0, L_2[\Y].v_0\>_V
& = \<L_1[\Y]\sum_i \y_i, L_2[\Y]\sum_j \y_j\>_V && \text{(by line (\ref{sum;y;v0}))}\\
& = \sum_{i,j} \<L_1[\Y] \y_i, L_2[\Y] \y_j\>_V\\
& = \sum_{i,j} \<L_1[\lambda_i] \y_i, L_2[\lambda_j] \y_j\>_V\\
& = \sum_{i,j} L_1[\lambda_i]  \overline{L_2[\lambda_j]} \< \y_i, \y_j\>_V\\
& = \sum_{i}L_1[\lambda_i]\overline{L_2[\lambda_i]}\Vert\y_i\Vert^2,
\end{align*}
where the last equation follows since $\{\y_i\}^{D-1}_{i=-D}$ is an orthogonal basis for $\W$.
\end{proof}

\noindent
Motivated by Lemma \ref{bilinear;L1,2} we define a bilinear form 
$\< \cdot , \cdot \>_L : L \times L \to \C$ as follows. 
For $L_1,L_2 \in L$ 
\begin{equation}\label{bilinear;L}
\< L_1[y],L_2[y]\>_L :=  \sum^{D-1}_{i=-D}L_1[\lambda_i]\overline{L_2[\lambda_i]}\Vert \y_i\Vert^2,
\end{equation}
where $\Vert \y_i\Vert^2$ is from Proposition \ref{norm;y}.

\begin{lemma}\label{bil;property}
Let $L_1,L_2 \in \C[y,y^{-1}]$. With reference to the form {\rm (\ref{bilinear;L})},
\begin{equation}\label{bil;property;pf}
\< yL_1[y],L_2[y]\>_L = \< L_1[y],yL_2[y]\>_L. 
\end{equation}
\end{lemma}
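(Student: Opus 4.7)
The plan is to interpret the bilinear form on $L$ via the action of $\Y$ on the $\H$-module $\W$, and then use that $\Y$ is fixed by the antiautomorphism $\dagger$ of $\H$.

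First, I would observe that the identity from Lemma \ref{bilinear;L1,2}, namely
$$\<L_1[y], L_2[y]\>_L = \<L_1[\Y].v_0, L_2[\Y].v_0\>_V,$$
extends verbatim to arbitrary $L_1, L_2 \in \C[y, y^{-1}]$, not merely to elements of $L$. The derivation in Lemma \ref{bilinear;L1,2} uses only that $\{\y_i\}^{D-1}_{i=-D}$ is an orthogonal eigenbasis for $\Y$ summing to $v_0$ (Theorem \ref{e-vec;Y}); since $\Y$ is invertible in $\H$, the operator $L[\Y]$ is well-defined on $\W$ for any Laurent polynomial $L$, and the same computation goes through without change.

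Next, I would apply this extended identity with $yL_1$ in place of $L_1$. Using $(yL_1)[\Y] = \Y \cdot L_1[\Y]$, this gives
$$\<yL_1[y], L_2[y]\>_L = \<\Y L_1[\Y].v_0,\, L_2[\Y].v_0\>_V.$$
Now I would invoke Lemma \ref{antiauto;bilinear;W} with $h = \Y$, together with $\Y^{\dagger} = \Y$ from equation (\ref{X,Y;dagger}), to move $\Y$ across the inner product:
$$\<\Y L_1[\Y].v_0,\, L_2[\Y].v_0\>_V = \<L_1[\Y].v_0,\, \Y L_2[\Y].v_0\>_V = \<L_1[\Y].v_0,\, (yL_2)[\Y].v_0\>_V.$$
Translating back through the extended identity yields $\<L_1[y], yL_2[y]\>_L$, which is (\ref{bil;property;pf}).

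There is no real obstacle here; the whole argument reduces to the two algebraic inputs $\Y^{\dagger} = \Y$ and Lemma \ref{antiauto;bilinear;W}. The conceptual point is that under the $\H$-module isomorphism $L \cong \W$ of Remark \ref{H-mod;L}, multiplication by $y$ on $L$ corresponds to the action of $\Y$ on $\W$, so the self-adjointness of multiplication by $y$ with respect to $\< \cdot, \cdot\>_L$ is precisely the statement that $\Y$ is $\dagger$-invariant in $\H$.
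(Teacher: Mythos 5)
Your proof is correct and follows essentially the same route as the paper: convert $\<\cdot,\cdot\>_L$ to $\<\cdot,\cdot\>_V$ via Lemma \ref{bilinear;L1,2}, move $\Y$ across the inner product using Lemma \ref{antiauto;bilinear;W} and $\Y^{\dagger}=\Y$, and convert back. Your explicit remark that the identity of Lemma \ref{bilinear;L1,2} extends to all of $\C[y,y^{-1}]$ (needed since $yL_1$ may leave $L$) is a point the paper uses implicitly, and it is a welcome clarification.
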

\begin{proof}
On the left-hand side in (\ref{bil;property;pf}) put $L'_1[y]=yL_1[y]$. 
Then by Lemma \ref{bilinear;L1,2}
$$\<L'_1[y],L_2[y]\>_L = \<L'_1[\Y]v_0,L_2[\Y]v_0\>_V = \<\Y L_1[\Y]v_0,L_2[\Y]v_0\>_V.$$
By Lemma \ref{antiauto;bilinear;W} and $\Y=\Y^{\dagger}$ from (\ref{X,Y;dagger}), 
it follows
$\<\Y L_1[\Y]v_0,L_2[\Y]v_0\>_V=\< L_1[\Y]v_0,\Y L_2[\Y]v_0\>_V$.
But the right-hand side in (\ref{bil;property;pf}) is equal to $\< L_1[\Y]v_0,\Y L_2[\Y]v_0\>_V $. The result follows.
\end{proof}

\noindent
We now show that the Laurent polynomials $\varepsilon^{+}_i, \varepsilon^{-}_i$ satisfy the orthogonality relation with respect to the bilinear form (\ref{bilinear;L}).
\begin{theorem}\label{orthogonality;s,r1,r2}
Let $\varepsilon^{+}_i, \varepsilon^{-}_i$ be the Laurent polynomials
in Definition {\rm \ref{e;s,r,h}}.
For $-D \leq r \leq D-1$, let $\y_r$ be an eigenvector of $\Y$ 
for the eigenvalue $\lambda_r$ as in Theorem {\rm \ref{e-vec;Y}}.
Then for $\sigma, \tau \in \{+,-\}$,
$$
\nonumber
\sum^{D-1}_{r=-D}\varepsilon^{\sigma}_i[\lambda_r]\overline{\varepsilon^{\tau}_j[\lambda_r]}\Vert \y_r\Vert^2
= \delta_{\sigma,\tau}\delta_{i,j}\Vert \hat{C}^{\sigma}_i \Vert^2,
$$
where $\Vert \y_r \Vert^2$ is given in Proposition {\rm \ref{norm;y}} 
and $\Vert \hat{C}^{\sigma}_i \Vert^2$ is given in Lemma {\rm \ref{norm;C;pm}}.
\end{theorem}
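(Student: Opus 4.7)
The plan is to recognize the left-hand side as an inner product on $\W$ and then exploit two facts established earlier in the paper: first, that $\varepsilon^{\sigma}_i[\Y]$ sends $v_0$ to the characteristic vector $\hat{C}^{\sigma}_i$ (Proposition \ref{Action;e;C}); and second, that the characteristic vectors $\{\hat{C}^{\pm}_i\}^{D-1}_{i=0}$ form an orthogonal basis for $\W$ with respect to $\langle\cdot,\cdot\rangle_V$ (the remark following the definition of $\W$ in Section \ref{S3;DRG}).

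First I would apply Lemma \ref{bilinear;L1,2} with $L_1=\varepsilon^{\sigma}_i$ and $L_2=\varepsilon^{\tau}_j$ to rewrite the sum
$$
\sum^{D-1}_{r=-D}\varepsilon^{\sigma}_i[\lambda_r]\,\overline{\varepsilon^{\tau}_j[\lambda_r]}\,\Vert\y_r\Vert^2
=\langle \varepsilon^{\sigma}_i[\Y].v_0,\ \varepsilon^{\tau}_j[\Y].v_0\rangle_V.
$$
Next I would invoke Proposition \ref{Action;e;C}, which gives $\varepsilon^{\sigma}_i[\Y].v_0=\hat{C}^{\sigma}_i$ and $\varepsilon^{\tau}_j[\Y].v_0=\hat{C}^{\tau}_j$, so the right-hand side becomes $\langle \hat{C}^{\sigma}_i, \hat{C}^{\tau}_j\rangle_V$. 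Finally, since the cells $C^{\sigma}_i$ of the equitable partition are pairwise disjoint subsets of $X$, the characteristic vectors $\hat{C}^{\sigma}_i$ are pairwise orthogonal with respect to $\langle\cdot,\cdot\rangle_V$ (indeed $\hat{y}$ for $y\in X$ form an orthonormal basis of $V$), so $\langle \hat{C}^{\sigma}_i,\hat{C}^{\tau}_j\rangle_V=\delta_{\sigma,\tau}\delta_{i,j}\Vert\hat{C}^{\sigma}_i\Vert^2$. Assembling these three steps yields the claim.

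There is really no serious obstacle: all the heavy lifting was done in the preceding sections. The two ingredients that make the argument work are (a) the identification of the bilinear form $\langle\cdot,\cdot\rangle_L$ with an inner product on $\W$ pulled back through the map $L\to \W$, $f\mapsto f[\Y].v_0$, and (b) the interpretation of $\varepsilon^{\sigma}_i$ as the polynomial that realizes $\hat{C}^{\sigma}_i$ out of $v_0$ under the $\Y$-action. Both of these are already in hand. The only minor point worth checking is that the $\y_r$ used to define the form are the same as those appearing in Proposition \ref{norm;y}; this is built into Lemma \ref{bilinear;L1,2} and the definition (\ref{bilinear;L}). Hence the theorem follows directly in one short chain of equalities.
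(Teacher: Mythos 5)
Your proposal is correct and follows essentially the same route as the paper: the paper's proof likewise applies Lemma \ref{bilinear;L1,2} and Proposition \ref{Action;e;C} to convert the sum into $\langle \hat{C}^{\sigma}_i,\hat{C}^{\tau}_j\rangle_V$ and then concludes by the orthogonality of the basis $\{\hat{C}^{\pm}_i\}^{D-1}_{i=0}$ together with Lemma \ref{norm;C;pm}. No gaps.
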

\begin{proof}
Using Lemma \ref{bilinear;L1,2} and Corollary \ref{Action;e;C} we find 
$$
\sum^{D-1}_{r=-D}\varepsilon^{\sigma}_i[\lambda_r]\overline{\varepsilon^{\tau}_j[\lambda_r]}\Vert \y_r\Vert^2
= \< \varepsilon^{\sigma}_i[\Y]v_0, \varepsilon^{\tau}_j[\Y]v_0\> _V
= \<\hat{C}^{\sigma}_i, \hat{C}^{\tau}_j\>_V.
$$
By Lemma \ref{norm;C;pm} the result follows.
\end{proof}
\noindent
Recall $\Vert \hat{C}^{\pm}_i\Vert^2$ from Lemma \ref{norm;C;pm} and the scalars $a,b,c,d$ from Definition \ref{abcd}.
Using (\ref{rels;abcd}) evaluate $\Vert \hat{C}^{\pm}_i\Vert^2$ in terms of $a,b,c,d$.
Then by Theorem \ref{orthogonality;s,r1,r2} it follows 
\begin{align}
\nonumber
\<\varepsilon^{\sigma}_i, \varepsilon^{\tau}_j\>_L   & = \delta_{\sigma,\tau}\delta_{i,j}\Vert \hat{C}^{\sigma}_i \Vert^2 \\
\label{norm;C;abcd}
& = 
\begin{cases}
\medskip
\delta_{\sigma,\tau}\delta_{i,j}\dfrac{(abq,ac,ad,abcd;q)_i}{a^{2i}(q,bc,bd,cd;q)_i} & \text{ if } \sigma=-, \\
\delta_{\sigma,\tau}\delta_{i,j}(-1)\dfrac{b(1-ac)(1-ad)}{a(1-bc)(1-bd)}\dfrac{(abq,acq,adq,abcd;q)_i}{a^{2i}(q,bcq,bdq,cd;q)_i} & \text{ if  } \sigma=+.
\end{cases}
\end{align}

\section{The algebra $\Hqi$}\label{Section;Hqi}

Recall from Section \ref{Section;UDAHA} that $\H$ is the universal
DAHA of type $(C^{\vee}_1,C_1)$. In this section we change $q$ by $q^{-1}$ and discuss the algebra $\Hqi$ and its module.
Recall from Section \ref{NonsymAWpolys} that $\tilde{\mathfrak{H}}=\tilde{\mathfrak{H}}(a,b,c,d;q)$ is 
the DAHA of type $(C^{\vee}_1,C_1)$. 
We will compare $\Hqi$ and $\tilde{\mathfrak{H}}$ shortly.

\begin{lemma}\label{iso;H;q;qinv}
There exists a $\C$-algebra isomorphism $\eta_1 : \Hqi \to \H$ that sends
$$
t_0 \mapsto t_0^{-1}, \qquad 
t_1 \mapsto t_0t_1^{-1}t_0^{-1}, \qquad 
t_2 \mapsto t_3^{-1}t_2^{-1}t_3, \qquad
t_3 \mapsto t_3^{-1}.
$$
Moreover, $\eta_1^2=1$.
\end{lemma}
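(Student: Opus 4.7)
The plan is to verify that the four images of the generators satisfy the defining relations of $\Hqi$ (which are Definition \ref{Def;UDAHA} with $q$ replaced by $q^{-1}$), so that $\eta_1$ extends to a $\C$-algebra homomorphism, and then to show that $\eta_1$ is an involution, which forces bijectivity. Relations (i) and (ii) of Definition \ref{Def;UDAHA} are essentially automatic: each image is a product of units in $\H$ (giving (i)), while for (ii) the cases $n=0,3$ are immediate since the images lie in $\{t_0, t_0^{-1}\}$ and $\{t_3, t_3^{-1}\}$ respectively, and for $n=1,2$ conjugation yields
\[
\eta_1(t_1)+\eta_1(t_1)^{-1} = t_0(t_1^{-1}+t_1)t_0^{-1} = t_1+t_1^{-1},
\qquad
\eta_1(t_2)+\eta_1(t_2)^{-1} = t_3^{-1}(t_2^{-1}+t_2)t_3 = t_2+t_2^{-1},
\]
which are central in $\H$ by hypothesis.

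The substantive step is the twisted version of relation (iii), namely $\eta_1(t_0t_1t_2t_3)=q^{1/2}$. Multiplying the four images in order and collapsing the adjacent pairs $t_0^{-1}\cdot t_0$ and $t_3\cdot t_3^{-1}$ gives
\[
\eta_1(t_0)\eta_1(t_1)\eta_1(t_2)\eta_1(t_3) = t_0^{-1}\cdot t_0t_1^{-1}t_0^{-1} \cdot t_3^{-1}t_2^{-1}t_3 \cdot t_3^{-1} = t_1^{-1}t_0^{-1}t_3^{-1}t_2^{-1} = (t_2t_3t_0t_1)^{-1},
\]
so it suffices to prove $t_2t_3t_0t_1 = q^{-1/2}$ in $\H$. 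Starting from $t_0t_1t_2t_3 = q^{-1/2}$ and using that the scalar $q^{-1/2}$ is central, two successive conjugations cyclically permute the factors: conjugating by $t_0$ gives $t_1t_2t_3t_0 = q^{-1/2}$, and conjugating by $t_1$ gives $t_2t_3t_0t_1 = q^{-1/2}$, which is the required identity.

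For involutivity, the same formulas with $q$ and $q^{-1}$ interchanged define a candidate $\C$-algebra homomorphism $\eta_1':\H\to\Hqi$; the verification of its well-definedness is identical by the symmetry $q\leftrightarrow q^{-1}$. Evaluating $\eta_1'\circ\eta_1$ on each generator of $\Hqi$ then shows it fixes that generator: $t_0$ and $t_3$ are immediate, and for $t_1$ one finds
\[
\eta_1'(\eta_1(t_1)) = \eta_1'(t_0t_1^{-1}t_0^{-1}) = t_0^{-1}\cdot t_0t_1t_0^{-1}\cdot t_0 = t_1,
\]
with $t_2$ entirely parallel (using $t_3$ in place of $t_0$). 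Hence $\eta_1'\circ\eta_1 = 1$, and by the same symmetry $\eta_1\circ\eta_1' = 1$, giving the isomorphism and the relation $\eta_1^2=1$ in the stated sense. The main obstacle is the cyclic identity $t_2t_3t_0t_1=q^{-1/2}$; everything else reduces to centrality bookkeeping and routine cancellation.
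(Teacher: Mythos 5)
Your proof is correct and follows the only natural route, which is also the paper's (the paper's proof is simply ``Use Definition \ref{Def;UDAHA}'', leaving the verification implicit): check that the proposed images satisfy relations (i)--(iii) of the presentation of $\Hqi$, with the key point being the cyclic identity $t_2t_3t_0t_1=q^{-1/2}$ obtained by conjugating $t_0t_1t_2t_3=q^{-1/2}$, and then verify involutivity on generators. All your computations check out.
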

\begin{proof}
Use Definition \ref{Def;UDAHA}.
\end{proof}
\begin{lemma}{\rm\cite[Lemma 16.8]{PT:DAHA}}\label{Lem16.8}
There is a surjective $\C$-algebra homomorphism $\eta_2 : \H \to \tilde{\mathfrak{H}}$ that sends
$$
t_0 \mapsto -(ab)^{-1/2}T_1, \quad t_1 \mapsto -(ab)^{1/2}T_1^{-1}Z^{-1}, \quad t_2 \mapsto -q^{-1}(cd)^{1/2}ZT_0^{-1}, \quad t_3 \mapsto -q^{1/2}(cd)^{-1/2}T_0.
$$
\end{lemma}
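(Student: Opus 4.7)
The plan is to verify that the four assignments respect the three families of defining relations of $\H$ from Definition \ref{Def;UDAHA}, giving a well-defined $\C$-algebra homomorphism, and then to recover the generators $T_0,T_1,Z^{\pm 1}$ of $\tilde{\mathfrak{H}}$ inside the image to deduce surjectivity. The invertibility relation (i) is immediate: the quadratic relations on $T_1$ and $T_0$ force $T_1^{-1} = -(ab)^{-1}(T_1+ab+1)$ and similarly for $T_0^{-1}$, while $Z$ is invertible by hypothesis; hence each of the four images is a unit in $\tilde{\mathfrak{H}}$. Relation (iii) is a short telescope:
$$
\eta_2(t_0t_1t_2t_3) = \bigl[(ab)^{-1/2}(ab)^{1/2}\cdot q^{-1}(cd)^{1/2}\cdot q^{1/2}(cd)^{-1/2}\bigr]\,T_1T_1^{-1}Z^{-1}ZT_0^{-1}T_0 = q^{-1/2}.
$$

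The main step is relation (ii), showing that each $\eta_2(t_n+t_n^{-1})$ is central; I will in fact show each is a scalar. For $n=0$, dividing the $T_1$-quadratic by $T_1$ yields $T_1 + abT_1^{-1} = -(ab+1)$, whence $\eta_2(t_0+t_0^{-1}) = -(ab)^{-1/2}(T_1+abT_1^{-1}) = (ab)^{-1/2}(ab+1)$; the same move on the $T_0$-quadratic gives $\eta_2(t_3+t_3^{-1}) = q^{1/2}(cd)^{-1/2}(q^{-1}cd+1)$. For $n=1$, I will expand $(T_1Z)^2+(a+b)T_1Z+ab=0$ as $T_1ZT_1Z=-(a+b)T_1Z-ab$, right-multiply by $Z^{-1}$ to get $T_1ZT_1=-(a+b)T_1-abZ^{-1}$, and then left-multiply by $T_1^{-1}$ to reach the key identity
$$
ZT_1 + ab\,T_1^{-1}Z^{-1} = -(a+b),
$$
so that $\eta_2(t_1+t_1^{-1}) = -(ab)^{-1/2}\bigl(ZT_1+ab\,T_1^{-1}Z^{-1}\bigr) = (ab)^{-1/2}(a+b)$. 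For $n=2$, the analogous manipulation of the $qT_0Z^{-1}$-quadratic produces $qT_0Z^{-1}+q^{-1}cd\cdot ZT_0^{-1} = -(c+d)$, giving $\eta_2(t_2+t_2^{-1}) = (cd)^{-1/2}(c+d)$. Each is a scalar, hence central, so (ii) is settled.

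Surjectivity is then immediate: $T_1 = -(ab)^{1/2}\eta_2(t_0)$ and $T_0 = -q^{-1/2}(cd)^{1/2}\eta_2(t_3)$ are in the image, and substituting into the formula $\eta_2(t_1)=-(ab)^{1/2}T_1^{-1}Z^{-1}$ yields $Z^{-1} = \eta_2(t_0t_1)$, so $Z^{\pm 1}$ is reached and $\eta_2$ is onto. The only delicate point in the whole argument is the non-commutativity bookkeeping inside (ii) for $n=1,2$: the relevant quadratic relation is on $T_1Z$ (respectively $qT_0Z^{-1}$), whereas $\eta_2(t_n+t_n^{-1})$ naturally involves $ZT_1$ and $T_1^{-1}Z^{-1}$ (respectively $ZT_0^{-1}$ and $T_0Z^{-1}$), so one must conjugate by $T_1^{\pm 1}$ and $Z^{\mp 1}$ in the correct order to land on the identity that makes the image a scalar; everything else is routine.
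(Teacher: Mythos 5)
The paper offers no proof of this statement at all---it is imported verbatim as \cite[Lemma 16.8]{PT:DAHA}---so there is no in-paper argument to compare against; what you have supplied is a self-contained verification, and it is correct. I checked the key identities: the quadratic $(T_1+ab)(T_1+1)=0$ gives $T_1+ab\,T_1^{-1}=-(ab+1)$ and hence $\eta_2(t_0+t_0^{-1})=(ab)^{-1/2}(ab+1)$; your conjugation chain $T_1ZT_1Z=-(a+b)T_1Z-ab \Rightarrow T_1ZT_1=-(a+b)T_1-abZ^{-1} \Rightarrow ZT_1+ab\,T_1^{-1}Z^{-1}=-(a+b)$ is exactly what is needed for $n=1$, since $\eta_2(t_1)^{-1}=-(ab)^{-1/2}ZT_1$; and setting $U=qT_0Z^{-1}$ one gets $U+cd\,U^{-1}=-(c+d)$, which handles $n=2,3$. (For $n=2$ no conjugation is actually required: $\eta_2(t_2)=-(cd)^{1/2}U^{-1}$ and $\eta_2(t_2)^{-1}=-(cd)^{-1/2}U$ on the nose, so the scalar drops out immediately---only $n=1$ is genuinely delicate.) Since each $\eta_2(t_n+t_n^{-1})$ is a scalar it is central, the product relation telescopes to $q^{-1/2}$, and surjectivity follows because $T_1$, $T_0$, $Z^{-1}=\eta_2(t_0t_1)$ and $Z=\eta_2(t_1^{-1}t_0^{-1})$ all lie in the image. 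This is the standard way the result is established in \cite{PT:DAHA}, and your write-up would serve as a complete proof.
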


\noindent
Referring to Lemma \ref{iso;H;q;qinv} and Lemma \ref{Lem16.8} 
we define the map $\xi : \Hqi \to \tilde{\mathfrak{H}}$ to be the composition
$\xi=\eta_2\eta_1 $. Observe that this map is surjective and sends
\begin{equation}\label{map;xi}
\begin{split}
t_0 &\mapsto -(ab)^{1/2}T_1^{-1}, \\
t_1 &\mapsto -(ab)^{-1/2}T_1Z, \\
t_2 &\mapsto -q(cd)^{-1/2}Z^{-1}T_0, \\
t_3 &\mapsto -q^{-1/2}(cd)^{1/2}T_0^{-1}.
\end{split}
\end{equation}

\begin{lemma}\label{mapping;xi}
Recall $Y=T_1T_0 \in \tilde{\mathfrak{H}}$. 
Referring to the map $\xi$ in {\rm(\ref{map;xi})}, for $\X, \Y \in \Hqi$
\begin{align*}
& \X^{\xi}=q^{-1/2}(abcd)^{1/2}Y^{-1}, && (\X^{-1})^{\xi}=q^{1/2}(abcd)^{-1/2}Y, && \\
& \Y^{\xi}=Z, &&  (\Y^{-1})^{\xi}=Z^{-1}. &&
\end{align*}
\end{lemma}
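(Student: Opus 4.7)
The plan is to prove this lemma by direct computation: apply the homomorphism $\xi$ to the definitions $\Y = t_0 t_1$ and $\X = t_3 t_0$ (interpreted inside $\Hqi$), substitute the four image formulas from (\ref{map;xi}), and simplify using elementary algebra in $\tilde{\mathfrak{H}}$. No relation from Definition \ref{Def;UDAHA} needs to be invoked beyond what is already built into the homomorphism $\xi=\eta_2\eta_1$ coming from Lemma \ref{iso;H;q;qinv} and Lemma \ref{Lem16.8}; in particular, we do not need to verify multiplicatively that $\xi$ respects the defining relations, because we are merely computing images of specific products.

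First I would compute $\Y^{\xi}$. Since $\Y = t_0 t_1$,
\[
\Y^{\xi} = t_0^{\xi}\, t_1^{\xi} = \bigl(-(ab)^{1/2} T_1^{-1}\bigr)\bigl(-(ab)^{-1/2} T_1 Z\bigr) = T_1^{-1}T_1 Z = Z,
\]
where the two scalar prefactors cancel. Taking inverses immediately gives $(\Y^{-1})^{\xi} = Z^{-1}$.

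Next I would compute $\X^{\xi}$. Since $\X = t_3 t_0$,
\[
\X^{\xi} = t_3^{\xi}\, t_0^{\xi} = \bigl(-q^{-1/2}(cd)^{1/2} T_0^{-1}\bigr)\bigl(-(ab)^{1/2} T_1^{-1}\bigr) = q^{-1/2}(abcd)^{1/2}\, T_0^{-1} T_1^{-1}.
\]
Since $Y = T_1 T_0$, we have $Y^{-1} = T_0^{-1} T_1^{-1}$, yielding $\X^{\xi} = q^{-1/2}(abcd)^{1/2}\, Y^{-1}$. Taking inverses gives $(\X^{-1})^{\xi} = q^{1/2}(abcd)^{-1/2}\, Y$, which completes all four equalities.

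There is no real obstacle here: the only bookkeeping is to make sure the signs and scalar factors cancel properly and that the order $t_3 t_0$ is respected when translating to $T_0^{-1} T_1^{-1} = (T_1 T_0)^{-1} = Y^{-1}$. The key sanity check is the consistency of the scalar $q^{-1/2}(abcd)^{1/2}$, which arises as $(-1)(-1)\cdot q^{-1/2}(cd)^{1/2}(ab)^{1/2}$; nothing from the cross relations of $\tilde{\mathfrak{H}}$ is needed for the simplification, only the definition of $Y$.
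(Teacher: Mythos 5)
Your computation is correct and matches the paper's own (one-line) proof exactly: both simply expand $\X=t_3t_0$ and $\Y=t_0t_1$, substitute the generator images from (\ref{map;xi}), and cancel, using $Y^{-1}=T_0^{-1}T_1^{-1}$. Nothing further is needed, since $\xi=\eta_2\eta_1$ is already known to be an algebra homomorphism by Lemmas \ref{iso;H;q;qinv} and \ref{Lem16.8}.
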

\begin{proof}
Recall $\X=t_3t_0$ and $\Y=t_0t_1$. Use definition of $\xi$.
\end{proof}

In Section \ref{Section;UDAHA} we discussed the $\H$-module 
$\W(s,s^*,r_1,r_2,D;q)$. We consider an $\Hqi$-module on $\W$,
denoted by $\W(s',{s^*}',r'_1,r'_2,D';q^{-1})$. Note that $D=D'$.
By Proposition \ref{q-inv;Racah}, 
$$\W(s',{s^*}',r'_1,r'_2,D';q^{-1})=\W(s^{-1},s^{*-1},r^{-1}_1,r^{-1}_2,D;q^{-1}).$$
Recall the scalars $a$, $b$, $c$, $d$ from Definition \ref{abcd}. In the $q^{-1}$-Racah version, 
by (\ref{rels;abcd}) the scalars $a$, $b$, $c$, $d$ become 
$a^{-1}$, $b^{-1}$, $c^{-1}$, $d^{-1}$, respectively. 
By Lemma \ref{Hq-mod;abcd}, we can describe a structure of 
an $\Hqi$-module $\W(a^{-1}, b^{-1}, c^{-1}, d^{-1};q^{-1})$. 
For the rest of the paper, we denote
$\W_{q^{-1}} := \W(a^{-1}, b^{-1}, c^{-1}, d^{-1};q^{-1})$.

\begin{lemma}\label{Xinv-action;W'}
On the $\Hqi$-module $\W_{q^{-1}}$ the action of $\X^{-1}$ on $\{\hat{C}^{\pm}_i\}^{D-1}_{i=0}$ is as follows.
\begin{equation*}
\begin{cases}
\X^{-1}.\hat{C}^-_i = q^{i-\frac{1}{2}}(abcd)^{1/2}\hat{C}^-_i 
& \text{for} \quad i=0,1,\ldots, D-1,\\
\X^{-1}.\hat{C}^+_{i-1} = q^{-i+\frac{1}{2}}(abcd)^{-1/2}\hat{C}^+_{i-1} 
& \text{for} \quad i=1,2,\ldots, D.
\end{cases}
\end{equation*}
\end{lemma}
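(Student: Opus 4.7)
The plan is to obtain this statement as a direct specialization of Lemma \ref{X-action;abcd}, then invert. Here is how I would organize it.

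First, I would recall that by construction $\W_{q^{-1}}=\W(a^{-1},b^{-1},c^{-1},d^{-1};q^{-1})$ is the $\Hqi$-module produced by applying Lemma \ref{Hq-mod;abcd} with the substitutions $q\mapsto q^{-1}$ and $(a,b,c,d)\mapsto(a^{-1},b^{-1},c^{-1},d^{-1})$. The element $\X=t_3t_0$ is defined by the same word in the generators regardless of whether we are in $\H$ or $\Hqi$, so Lemma \ref{X-action;abcd} applies verbatim under these substitutions and describes the action of $\X$ on $\W_{q^{-1}}$.

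Carrying out the substitution in the two formulas of Lemma \ref{X-action;abcd} gives, for $0\leq i\leq D-1$,
\begin{equation*}
\X.\hat{C}^-_i=(q^{-1})^{i-\frac{1}{2}}\bigl((abcd)^{-1}\bigr)^{1/2}\hat{C}^-_i=q^{-i+\frac{1}{2}}(abcd)^{-1/2}\hat{C}^-_i,
\end{equation*}
and for $1\leq i\leq D$,
\begin{equation*}
\X.\hat{C}^+_{i-1}=(q^{-1})^{-i+\frac{1}{2}}\bigl((abcd)^{-1}\bigr)^{-1/2}\hat{C}^+_{i-1}=q^{i-\frac{1}{2}}(abcd)^{1/2}\hat{C}^+_{i-1}.
\end{equation*}
Here the convention $((abcd)^{-1})^{1/2}=(abcd)^{-1/2}$ is consistent with the fixed square roots of Note \ref{Note;S4}.

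Finally, since each $\hat{C}^{\sigma}_i$ is an eigenvector of $\X$ with a nonzero scalar eigenvalue, it is automatically an eigenvector of $\X^{-1}$ with the reciprocal eigenvalue. Inverting the two displayed eigenvalues yields precisely the claimed action of $\X^{-1}$ on $\{\hat{C}^{\pm}_i\}^{D-1}_{i=0}$. There is no genuine obstacle here; the only thing to be careful about is the consistent interpretation of the square roots $(abcd)^{\pm 1/2}$ under the passage from $q$ to $q^{-1}$, which is handled by Note \ref{Note;S4} together with the relations (\ref{rel;q-inv(0)})--(\ref{rel;q-inv(2)}) of Proposition \ref{q-inv;Racah}.
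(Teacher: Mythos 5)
Your proposal is correct and follows the same route as the paper, whose entire proof is ``Use Lemma \ref{X-action;abcd}''; you have simply spelled out the substitution $q\mapsto q^{-1}$, $(a,b,c,d)\mapsto(a^{-1},b^{-1},c^{-1},d^{-1})$ and the passage from the eigenvalues of $\X$ to those of $\X^{-1}$, and the resulting eigenvalues match the statement exactly.
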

\begin{proof}
Use Lemma \ref{X-action;abcd}.
\end{proof}

\noindent
Recall $\varepsilon^-_i=\varepsilon^-_i[y;a,b,c,d \mid q]$
and $\varepsilon^+_i=\varepsilon^+_i[y;a,b,c,d \mid q]$
from Proposition \ref{e;abcd}.
We describe these polynomials in $q^{-1}$-version
$$
\varepsilon^{\sigma}_i[y ; q^{-1}]=\varepsilon^{\sigma}_i[y;a^{-1},b^{-1},c^{-1},d^{-1} \mid q^{-1}],
$$
where $0 \leq i \leq D-1$ and $\sigma \in \{+,-\}$.
To this end we need a few preliminary lemmas.
\begin{lemma}\label{(x;q)n;inverse}
For $n=0,1,2\ldots$ and a nonzero scalar $x \in \C$,
\begin{equation*}
(x^{-1};q^{-1})_n = (-1)^nx^{-n}q^{-\frac{n(n-1)}{2}}(x;q)_n.
\end{equation*}
\end{lemma}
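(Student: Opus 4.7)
The plan is a direct computation from the definition of the $q$-Pochhammer symbol given in \eqref{(a;q)n}, so this is really just bookkeeping. First I would write out
\begin{equation*}
(x^{-1};q^{-1})_n = \prod_{k=0}^{n-1}\left(1 - x^{-1}q^{-k}\right),
\end{equation*}
and then rewrite each factor as
\begin{equation*}
1 - x^{-1}q^{-k} = -x^{-1}q^{-k}\bigl(1 - xq^{k}\bigr).
\end{equation*}

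Next I would pull the prefactors $-x^{-1}q^{-k}$ out of the product. This produces a sign $(-1)^n$, a power $x^{-n}$, and a power of $q$ with exponent $-(0+1+\cdots+(n-1)) = -n(n-1)/2$. What remains inside the product is $\prod_{k=0}^{n-1}(1-xq^k)$, which is exactly $(x;q)_n$ by \eqref{(a;q)n}. Combining these pieces yields the claimed identity. The $n=0$ case is trivial since both sides equal $1$ by the empty product convention.

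There is no real obstacle here; the only thing to be careful about is the exponent of $q$, which comes from the arithmetic series $\sum_{k=0}^{n-1}k = n(n-1)/2$, and keeping track of the sign coming from $n$ applications of the factor $-1$. The identity is a well-known reflection formula for $q$-shifted factorials (see e.g.\ \cite[(I.11)]{GR}), and in the context of this paper it will be the key tool for converting expressions involving $(\cdot;q)_n$ into expressions involving $(\cdot;q^{-1})_n$ when passing between $\H$ and $\Hqi$ in the subsequent sections.
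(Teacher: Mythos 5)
Your proof is correct and follows the same route as the paper, which simply says to compute $(x^{-1};q^{-1})_n$ directly from the definition (\ref{(a;q)n}); you have just written out the factorization $1-x^{-1}q^{-k}=-x^{-1}q^{-k}(1-xq^{k})$ and the resulting sign, power of $x$, and exponent $\sum_{k=0}^{n-1}k=n(n-1)/2$ explicitly. No gaps.
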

\begin{proof}
Use the definition (\ref{(a;q)n}) to compute $(x^{-1};q^{-1})_n$. 
\end{proof}

\begin{lemma}\label{rel;P;Pqinv}
Let the monic polynomials $P_i[y;a,b,c,d\mid q]$ be  as in {\rm(\ref{monic;P;Pp})}. Then
$$
P_i[y;a,b,c,d\mid q] = P_i[y;a^{-1}, b^{-1},c^{-1},d^{-1} \mid q^{-1}].
$$
\end{lemma}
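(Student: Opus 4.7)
The plan is to substitute the definitions directly and reduce to Lemma \ref{(x;q)n;inverse}. By \eqref{monic;P;Pp} and \eqref{AWpoly;ab;finite},
$$
P_i[y;a,b,c,d\mid q] = \frac{(ab,ac,ad;q)_i}{a^i(abcdq^{i-1};q)_i}\sum_{j=0}^{i}\frac{(q^{-i},abcdq^{i-1},ay,ay^{-1};q)_j}{(ab,ac,ad,q;q)_j}\,q^j,
$$
so it suffices to show that the analogous expression on the $(q^{-1})$-side, with $a,b,c,d,q$ replaced by $a^{-1},b^{-1},c^{-1},d^{-1},q^{-1}$, equals this.

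First I would apply Lemma \ref{(x;q)n;inverse} term-by-term to each of the eight Pochhammer symbols in the summand of $p_i[y;a^{-1},b^{-1},c^{-1},d^{-1}\mid q^{-1}]$, noting e.g. that $(q^i;q^{-1})_j=(-1)^jq^{ij}q^{-\binom{j}{2}}(q^{-i};q)_j$ and $((abcdq^{i-1})^{-1};q^{-1})_j=(-1)^j(abcd)^{-j}q^{-(i-1)j}q^{-\binom{j}{2}}(abcdq^{i-1};q)_j$, and similarly for the $ay$, $ay^{-1}$, $ab$, $ac$, $ad$, $q$ factors. The four factors of $(-1)^j$ and the four factors of $q^{-\binom{j}{2}}$ in the numerator cancel those in the denominator, leaving only monomial prefactors in $a,b,c,d,q$.

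Next I would carry out the bookkeeping for those monomials: the numerator contributes $q^{ij}\cdot(abcd)^{-j}q^{(1-i)j}\cdot a^{-2j}\cdot y^{-j}y^{j}$ and the denominator contributes $(abcd)^{-j}a^{-2j}\cdot q^{-j}$, yielding an overall factor $q^{2j}$. Combined with the $z^j=q^{-j}$ factor from the $_4\phi_3$ argument, this is precisely $q^j$, so the summand matches that of $p_i[y;a,b,c,d\mid q]$, proving equality of the $_4\phi_3$ parts. An identical bookkeeping (with $i$ in place of $j$, and one Pochhammer rather than four in numerator and denominator) applied to the prefactor $(a^{-1}b^{-1},a^{-1}c^{-1},a^{-1}d^{-1};q^{-1})_i/(a^{-i}((abcd)^{-1}q^{1-i};q^{-1})_i)$ shows that this also reduces to $(ab,ac,ad;q)_i/(a^i(abcdq^{i-1};q)_i)$, again because signs and $q^{-\binom{i}{2}}$ factors cancel and the remaining monomial powers in $a,b,c,d$ balance.

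The only real obstacle is the arithmetic bookkeeping of exponents of $q$ and of the scalars $a,b,c,d$ across all eight Pochhammers in the sum plus the four in the normalization. It is routine once organized, but one must be careful not to drop a factor, especially the asymmetric $q^{(1-i)j}$ coming from the argument $abcdq^{i-1}$. Note that by Definition \ref{Def;UDAHA} no nondegeneracy hypothesis on $a,b,c,d$ beyond nonvanishing is needed, since this identity is a formal manipulation of finite sums of rational functions in these parameters.
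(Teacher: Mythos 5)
Your proposal is correct and follows the same route as the paper, which simply evaluates $P_i[y;a^{-1},b^{-1},c^{-1},d^{-1}\mid q^{-1}]$ via (\ref{monic;P;Pp}) and Lemma \ref{(x;q)n;inverse}; your exponent bookkeeping (net $q^{2j}$ from the Pochhammers times $q^{-j}$ from the argument, and the cancellation in the normalizing prefactor) checks out. Only two cosmetic slips: the prefactor has three Pochhammers in its numerator and one in its denominator (not ``one rather than four''), and the closing reference to Definition \ref{Def;UDAHA} is spurious --- the relevant nondegeneracy is Lemma \ref{conditions;abcd} --- but neither affects the argument.
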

\begin{proof}
Evaluate $P_i[y;a^{-1}, b^{-1},c^{-1},d^{-1} \mid q^{-1}]$ using (\ref{monic;P;Pp})
and Lemma \ref{(x;q)n;inverse}.
\end{proof}

\noindent
For notational convenience, for $1 \leq i \leq D-1$ we define the  Laurent polynomials
$Q_i = Q_i[y;a,b,c,d \mid q]$ by
\begin{equation}\label{Q}
Q_i := a^{-1}b^{-1}y^{-1}(1-ay)(1-by)P^\p_{i-1},
\end{equation}
where we recall $P^\p_i = P_i[y;aq,bq,c,d \mid q]$ from (\ref{monic;P;Pp}); cf. (\ref{nonsym;Q}).

\begin{proposition}\label{e;abcd;qinv}
Let the scalars $a,b,c,d$ be as in Definition {\rm\ref{abcd}}.
Recall the polynomials $P_i=P_i[y;a,b,c,d \mid q]$ from {\rm (\ref{monic;P;Pp})} and $Q_i=Q_i[y;a,b,c,d\mid q]$ from {\rm(\ref{Q})}.
Then the Laurent polynomials $\varepsilon^{\pm}_i[y;q^{-1}]$
are described as follows: for $1 \leq i \leq D-1$\\
\begin{align*}
& \varepsilon^+_{i-1}[y ; q^{-1}]  =\frac{ab(1-q^i)(1-cdq^{i-1})}{(ab-1)} \frac{(abcd;q)_{2i-1}}{a^i(q,bc,bd,cd;q)_i}\left(P_i - Q_i\right),\\
& \varepsilon^-_{i}[y ; q^{-1}] = \frac{(1-abq^i)(1-abcdq^{i-1})}{(1-ab)}\frac{(abcd;q)_{2i-1}}{a^i(q,bc,bd,cd;q)_i}\left(P_i - \frac{ab(1-q^i)(1-cdq^{i-1})}{(1-abq^i)(1-abcdq^{i-1})}
Q_i\right).
\end{align*}
Moreover, 
$$
\varepsilon^-_0[y;q^{-1}] = 1, \qquad \qquad 
\varepsilon^+_{D-1}[y;q^{-1}] =  \frac{(abcd;q)_{2D-1}}{a^D(q,bc,bd;q)_D(cd;q)_{D-1}}P_D.
$$
\end{proposition}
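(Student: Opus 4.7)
The plan is to apply the definition $\varepsilon^{\sigma}_i[y;q^{-1}] := \varepsilon^{\sigma}_i[y; a^{-1}, b^{-1}, c^{-1}, d^{-1} \mid q^{-1}]$ directly to the formulas of Proposition~\ref{e;abcd}, and then to convert each factor back to the $q$-form using Lemmas~\ref{(x;q)n;inverse} and~\ref{rel;P;Pqinv}.

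For the polynomial factor, I would apply Lemma~\ref{rel;P;Pqinv} twice. Taking parameters $(a,b,c,d)$ gives $P_i[y; a^{-1}, b^{-1}, c^{-1}, d^{-1} \mid q^{-1}] = P_i$, and taking parameters $(aq, bq, c, d)$ gives $P_{i-1}[y; a^{-1}q^{-1}, b^{-1}q^{-1}, c^{-1}, d^{-1} \mid q^{-1}] = P^{\p}_{i-1}$. A direct expansion shows the elementary identity $y(1 - a^{-1}y^{-1})(1 - b^{-1}y^{-1}) = a^{-1}b^{-1}y^{-1}(1-ay)(1-by)$ (both sides equal $y - a^{-1} - b^{-1} + a^{-1}b^{-1}y^{-1}$). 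Hence the nonsymmetric tail $y(1-ay^{-1})(1-by^{-1})P^{\p}_{i-1}$ in Proposition~\ref{e;abcd}, after substitution, becomes exactly the Laurent polynomial $Q_i$ of~(\ref{Q}).

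For the scalar prefactor, I would apply Lemma~\ref{(x;q)n;inverse} to each Pochhammer symbol $(\cdot; q^{-1})_k$ and use the elementary $1 - X^{-1} = -X^{-1}(1 - X)$ for each short linear factor. In the Pochhammer ratio $\frac{(a^{-1}b^{-1}c^{-1}d^{-1}; q^{-1})_{2i}}{a^{-i}(q^{-1}, b^{-1}c^{-1}, b^{-1}d^{-1}, c^{-1}d^{-1}; q^{-1})_i}$, the signs $(-1)^{2i}$ and $(-1)^{4i}$ cancel, the powers of $q$ arising from $q^{-n(n-1)/2}$ telescope to $q^{-i(2i-1)}$ on both sides and cancel, and the powers of $a,b,c,d$ collapse to a net $a^{-i}$, recovering the original $\frac{(abcd;q)_{2i}}{a^i(q,bc,bd,cd;q)_i}$. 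The three (respectively four, for the $\varepsilon^-_i$ case) short factors $1 - X^{-1}$ that remain in the explicit numerator and denominator each produce a factor $-X^{-1}(1 - X)$; the $-X^{-1}$ contributions conspire to cancel, and the explicit prefactor $\frac{ab(1-q^i)(1-cdq^{i-1})}{(ab-1)(1-abcdq^{2i-1})}$ (respectively $\frac{(1-abq^i)(1-abcdq^{i-1})}{(1-ab)(1-abcdq^{2i-1})}$) is preserved. One then absorbs the denominator factor $(1 - abcdq^{2i-1})$ into the Pochhammer symbol via $(abcd;q)_{2i} = (abcd;q)_{2i-1}(1 - abcdq^{2i-1})$, producing $(abcd;q)_{2i-1}$ as stated. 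The boundary case $\varepsilon^-_0[y;q^{-1}] = 1$ is immediate, and $\varepsilon^+_{D-1}[y;q^{-1}]$ is handled by the same Pochhammer-conversion argument applied to the explicit formula for $\varepsilon^+_{D-1}$ in Proposition~\ref{e;abcd}.

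The main obstacle I anticipate is pure bookkeeping: a number of signs and powers of $q, a, b, c, d$ arise from the many applications of Lemma~\ref{(x;q)n;inverse}, and the proof amounts to checking that they cancel as claimed. No conceptual difficulty is expected; the argument is entirely computational.
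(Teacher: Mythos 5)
Your proposal is correct and follows essentially the same route as the paper's proof: substitute $a^{-1},b^{-1},c^{-1},d^{-1},q^{-1}$ into Proposition \ref{e;abcd}, convert the Pochhammer symbols via Lemma \ref{(x;q)n;inverse}, identify the monic pieces via Lemma \ref{rel;P;Pqinv}, and use the identity $y(1-a^{-1}y^{-1})(1-b^{-1}y^{-1})=a^{-1}b^{-1}y^{-1}(1-ay)(1-by)$ to recognize $Q_i$. Your bookkeeping of the signs, powers of $q$, and the absorption of $(1-abcdq^{2i-1})$ into $(abcd;q)_{2i}$ checks out, so the argument is complete.
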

\begin{proof}
In Proposition \ref{e;abcd}, replace $a,b,c,d,q$ by 
$a^{-1},b^{-1},c^{-1},d^{-1},q^{-1}$. 
Evaluate this using Lemma \ref{(x;q)n;inverse} 
and Lemma \ref{rel;P;Pqinv} and simplify the result.
Note that
$y(1-a^{-1}y^{-1})(1-b^{-1}qy^{-1})=a^{-1}b^{-1}y^{-1}(1-ay)(1-by)$. 
The results routinely follow.
\end{proof}

We finish this section with some comments.

\begin{lemma}\label{norm;C;qinv}
Referring the basis $\{\hat{C}^{\pm}_i\}^{D-1}_{i=0}$ for $\W_{q^{-1}}$,
the following hold. For $0 \leq i \leq D-1$,
\begin{align*}
&\Vert \hat{C}^-_i \Vert^2 = \frac{1-abq^i}{1-ab}\frac{(ab,ac,ad,abcd;q)_i}{a^{2i}(q,bc,bd,cd;q)_i}, \\
&\Vert \hat{C}^+_{i} \Vert^2 =\frac{ab(1-q^{i+1})(1-cdq^{i})}{(ab-1)(1-abcdq^{i})}\frac{(ab,ac,ad,abcd;q)_{i+1}}{a^{2i+2}(q,bc,bd,cd;q)_{i+1}}.
\end{align*}
\end{lemma}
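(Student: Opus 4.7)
The Hermitian inner product $\<\cdot,\cdot\>_V$ is defined intrinsically on the ambient space $V$ and does not depend on which algebra acts on $\W$; since the characteristic vectors $\hat{C}^{\pm}_i$ are literally the same $0$-$1$ vectors whether we regard $\W$ as an $\H$-module or as the $\Hqi$-module $\W_{q^{-1}}$, their norms must coincide with the expressions in line (\ref{norm;C;abcd}) obtained immediately after Theorem \ref{orthogonality;s,r1,r2}. Thus the proof reduces to verifying that the formulas asserted in the lemma are algebraically equal to the right-hand sides of (\ref{norm;C;abcd}) as rational functions in $a,b,c,d,q$.

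For $\sigma=-$, a single Pochhammer identity $(abq;q)_i=\frac{1-abq^i}{1-ab}(ab;q)_i$ converts the right-hand side of the $\sigma=-$ line of (\ref{norm;C;abcd}) into the first formula of the lemma. For $\sigma=+$, I would start from the lemma's formula and expand each length-$(i{+}1)$ Pochhammer symbol using two complementary splits: $(x;q)_{i+1}=(1-x)(xq;q)_i$ for $x\in\{ab,ac,ad,bc,bd\}$, and $(y;q)_{i+1}=(y;q)_i(1-yq^i)$ for $y\in\{q,cd,abcd\}$. The factors $(1-q^{i+1})$, $(1-cdq^i)$, and $(1-abcdq^i)$ then cancel against the rational prefactor $\frac{(1-q^{i+1})(1-cdq^i)}{1-abcdq^i}$; the scalar $\frac{ab(1-ab)}{a^2(ab-1)}$ collapses to $-\frac{b}{a}$; and the surviving Pochhammer ratio is precisely $\frac{(abq,acq,adq,abcd;q)_i}{a^{2i}(q,bcq,bdq,cd;q)_i}$, recovering exactly the $\sigma=+$ line of (\ref{norm;C;abcd}).

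Alternatively, and in the spirit of Proposition \ref{e;abcd;qinv}, one may substitute $(a,b,c,d,q)\mapsto(a^{-1},b^{-1},c^{-1},d^{-1},q^{-1})$ directly into (\ref{norm;C;abcd}) and apply Lemma \ref{(x;q)n;inverse} to each of the four Pochhammer factors in numerator and denominator; the $(-1)^i$ and $q^{-i(i-1)/2}$ contributions cancel symmetrically across the fraction, and the $x^{-i}$ prefactors consolidate to give the correct overall power of $a$. The only real obstacle is keeping track of signs and $q$-powers in the $\sigma=+$ case, where the rational prefactor also transforms; once consistent splits of the Pochhammer symbols are chosen, all cancellations are transparent and no conceptual difficulty remains.
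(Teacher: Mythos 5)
Your proposal is correct, and your ``alternative'' route in the last paragraph is exactly the paper's own proof: substitute $(a,b,c,d,q)\mapsto(a^{-1},b^{-1},c^{-1},d^{-1},q^{-1})$ into line (\ref{norm;C;abcd}) and convert each Pochhammer symbol back via Lemma \ref{(x;q)n;inverse}. Your primary route is a mild but legitimate variant: since $\<\cdot,\cdot\>_V$ and the $0$--$1$ vectors $\hat{C}^{\pm}_i$ are defined on $V$ independently of whether $\W$ carries the $\H$- or the $\Hqi$-module structure (indeed $\Vert\hat{C}^{\pm}_i\Vert^2=|C^{\pm}_i|$), the lemma reduces to checking that its right-hand sides agree with (\ref{norm;C;abcd}) as rational functions in $a,b,c,d,q$, which your Pochhammer splits $(x;q)_{i+1}=(1-x)(xq;q)_i$ and $(y;q)_{i+1}=(y;q)_i(1-yq^i)$ do correctly (the surviving factor $\tfrac{(1-ac)(1-ad)}{(1-bc)(1-bd)}$ together with $-b/a$ reproduces the prefactor of the $\sigma=+$ line). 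What the paper's substitution route buys, beyond verification, is an explanation of where the particular $q^{-1}$-adapted form of the formulas comes from, which is what makes the later identities (\ref{Lq_inv;bilinear;e;pm}) and Proposition \ref{finite.version;Lem2.2} come out in the shape of Lemma \ref{norm;E}; your intrinsic observation buys a cleaner justification that the two expressions must agree in the first place. Either way the verification is complete and there is no gap.
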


\begin{proof} 
In line (\ref{norm;C;abcd}) we expressed $\Vert \hat{C}^{\pm}_i \Vert^2$ 
in terms of the scalars $a,b,c,d,q$. 
Replace these scalars by $a^{-1},b^{-1},c^{-1},d^{-1},q^{-1}$, respectively.
Evaluate this using Lemma \ref{(x;q)n;inverse} and simplify it. 
The result routinely follows.
\end{proof}

\noindent
\begin{note}\label{Lq-inv}
In Remark \ref{H-mod;L} we discussed the $\H$-module $L=L(a,b,c,d;q)$.
We recall from (\ref{bilinear;L}) that $\<\cdot, \cdot\>_L$ is the bilinear form on $L$.
Consider the $\Hqi$-module $L(a^{-1},b^{-1},c^{-1},d^{-1};q^{-1})$.
Abbreviate $L_{q^{-1}} := L(a^{-1},b^{-1},c^{-1},d^{-1};q^{-1})$.
Observe that the Laurent polynomials $\varepsilon^+[y;q^{-1}]$, $\varepsilon^-[y;q^{-1}]$ in Proposition \ref{e;abcd;qinv} form a basis for $L_{q^{-1}}$.
By (\ref{norm;C;abcd}) and Lemma \ref{norm;C;qinv}, the bilinear form $\<\cdot, \cdot\>_{L_{q^{-1}}}$ satisfies
\begin{align}\label{Lq_inv;bilinear;e;pm}
\<\varepsilon^{\sigma}_i[y;q^{-1}], \varepsilon^{\tau}_j[y;q^{-1}]\>_{L_{q^{-1}}}   
& = 
\begin{cases}
\medskip
\delta_{\sigma,\tau}\delta_{i,j}\dfrac{1-abq^i}{1-ab}\dfrac{(ab,ac,ad,abcd;q)_i}{a^{2i}(q,bc,bd,cd;q)_i}& \text{ if } \sigma=-, \\
\delta_{\sigma,\tau}\delta_{i,j}\dfrac{ab(1-q^{i+1})(1-cdq^{i})}{(ab-1)(1-abcdq^{i})}\dfrac{(ab,ac,ad,abcd;q)_{i+1}}{a^{2i+2}(q,bc,bd,cd;q)_{i+1}} & \text{ if  } \sigma=+.
\end{cases}
\end{align}
\end{note}

\section{Nonsymmetric Askey-Wilson polynomials and $\varepsilon^{\pm}_i$}\label{Rels;E;epsilon}

We continue to work with the algebra $\Hqi$ in Section \ref{Section;Hqi}.
Throughout this section we let the scalars $a,b,c,d$ be as in Definition \ref{abcd}.
Recall the nonsymmetric Laurent polynomials 
$\varepsilon^{+}_i=\varepsilon^{+}_i[y;q^{-1}]$, 
$\varepsilon^{-}_i=\varepsilon^{-}_i[y;q^{-1}]$ from Proposition \ref{e;abcd;qinv}.
Referring to this proposition,
we make a definition of the Laurent polynomials $E_i$ $(-D \leq i \leq D-1)$
which is a natural normalization of $\varepsilon^{+}_i, \varepsilon^{-}_i$.
\begin{definition}\label{E;finite_version}
Recall the Laurent polynomial sequences $P_i$ from (\ref{monic;P;Pp})
and $Q_i$ from (\ref{Q}).
With reference to Proposition \ref{e;abcd;qinv}, we define
\begin{align*}
& E_{-i} := {P}_i - {Q}_i && \hspace{-2cm}(1 \leq i \leq D-1), \\
& E_i := P_i - \frac{ab(1-q^i)(1-cdq^{i-1})}{(1-abq^i)(1-abcdq^{i-1})}Q_i
&& \hspace{-2cm}(1 \leq i \leq D-1), 
\end{align*}
and $E_0 := 1$ and $E_{-D} := P_D$; cf. Definition \ref{Koornwinder;NonsymAW}.
\end{definition}
\noindent
By Proposition \ref{e;abcd;qinv} and Definition \ref{E;finite_version} 
one can readily find that 
for $1 \leq i \leq D-1$,
\begin{align}
\label{e;abcd;qinv(1)}
& E_{-i}  =\frac{(ab-1)}{ab(1-q^i)(1-cdq^{i-1})} \frac{a^i(q,bc,bd,cd;q)_i}{(abcd;q)_{2i-1}}\varepsilon^+_{i-1}, \\
\label{e;abcd;qinv(2)}
& E_i  = \frac{(1-ab)}{(1-abq^i)(1-abcdq^{i-1})}\frac{a^i(q,bc,bd,cd;q)_i}{(abcd;q)_{2i-1}}\varepsilon^-_{i} .
\end{align}
Moreover,
\begin{equation}\label{e;abcd;qinv(3)}
E_0 = \varepsilon^-_0 , \qquad \qquad 
E_{-D} =  \frac{a^D(q,bc,bd;q)_D(cd;q)_{D-1}}{(abcd;q)_{2D-1}}\varepsilon^+_{D-1}.
\end{equation}

Recall from Note {\rm \ref{Lq-inv}} that 
$L_{q^{-1}}$ is the $\Hqi$-module and 
$\< \cdot, \cdot \>_{L_{q^{-1}}}$ is the bilinear form on $L_{q^{-1}}$.
By a comment in Note \ref{Lq-inv} and (\ref{e;abcd;qinv(1)})--(\ref{e;abcd;qinv(3)})
the Laurent polynomials $E_i$ in Definition \ref{E;finite_version} 
form a basis for $L_{q^{-1}}$. 
Observe that $E_i$ 
are orthogonal with respect to $\< \cdot, \cdot \>_{L_{q^{-1}}}$.
In the following proposition, we give a discrete version of Lemma \ref{norm;E}.

\begin{proposition}{\rm(cf. Lemma \ref{norm;E})}\label{finite.version;Lem2.2}
For $1 \leq i \leq D-1$
\begin{align}
\label{E;e;bilinear(1)}
 \<E_{-i}, E_{-i}\>_{L_{q^{-1}}} 
&= \frac{(ab-1)(1-abcdq^{2i-1})}{ab(1-q^i)(1-cdq^{i-1})}\frac{(q,ab,ac,ad,bc,bd,cd;q)_{i}}{(abcd;q)_{2i}(abcdq^{i-1};q)_{i}},\\
\label{E;e;bilinear(2)}
 \<E_i,E_i\>_{L_{q^{-1}}}  
&= \frac{(1-ab)(1-abcdq^{2i-1})}{(1-abq^i)(1-abcdq^{i-1})}\frac{(q,ab,ac,ad,bc,bd,cd;q)_i}{(abcd;q)_{2i}(abcdq^{i-1};q)_i}.
\end{align}
Moreover, 
\begin{equation}\label{E;e;bilinear(3)} 
\<E_0, E_0\>_{L_{q^{-1}}} = 1, \qquad 
\<E_{-D},E_{-D}\>_{L_{q^{-1}}} = 
\frac{ab(1-q^D)}{(ab-1)}\frac{(q,ab,ac,ad,bc,bd;q)_D(cd;q)_{D-1}}{(abcd;q)_{2D-1}(abcdq^{D-1};q)_D}.
\end{equation}
\end{proposition}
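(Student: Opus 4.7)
The plan is to deduce the four identities directly from the orthogonality relations (\ref{Lq_inv;bilinear;e;pm}) for the Laurent polynomials $\varepsilon^{\pm}_i[y;q^{-1}]$ by means of the normalization formulas (\ref{e;abcd;qinv(1)})--(\ref{e;abcd;qinv(3)}), which express each $E_j$ as an explicit scalar multiple of a single $\varepsilon^{\sigma}_{i}[y;q^{-1}]$. Since the bilinear form $\<\cdot,\cdot\>_{L_{q^{-1}}}$ is bilinear, for any nonzero scalar $\alpha$ we have $\<\alpha f, \alpha f\>_{L_{q^{-1}}}=\alpha^2 \<f,f\>_{L_{q^{-1}}}$, so each of the four equalities reduces to a purely algebraic simplification of $q$-shifted factorials.

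Concretely, for $1 \le i \le D-1$, I would substitute (\ref{e;abcd;qinv(1)}) into $\<E_{-i},E_{-i}\>_{L_{q^{-1}}}$ to obtain
\[
\<E_{-i},E_{-i}\>_{L_{q^{-1}}} = \left(\frac{ab-1}{ab(1-q^i)(1-cdq^{i-1})}\right)^{\!2}\!\frac{a^{2i}(q,bc,bd,cd;q)_i^{2}}{(abcd;q)_{2i-1}^{2}}\,\<\varepsilon^+_{i-1},\varepsilon^+_{i-1}\>_{L_{q^{-1}}},
\]
and then plug in the $\sigma=+$ case of (\ref{Lq_inv;bilinear;e;pm}) with index $i-1$, namely
\[
\<\varepsilon^+_{i-1},\varepsilon^+_{i-1}\>_{L_{q^{-1}}} = \frac{ab(1-q^i)(1-cdq^{i-1})}{(ab-1)(1-abcdq^{i-1})}\,\frac{(ab,ac,ad,abcd;q)_i}{a^{2i}(q,bc,bd,cd;q)_i}.
\]
After cancellation the task is to identify $\dfrac{(abcd;q)_i}{(abcd;q)_{2i-1}^{2}(1-abcdq^{i-1})}$ with $\dfrac{1-abcdq^{2i-1}}{(abcd;q)_{2i}(abcdq^{i-1};q)_i}$. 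This follows from the splittings $(abcd;q)_{2i-1}=(abcd;q)_i(abcdq^i;q)_{i-1}$, $(abcd;q)_{2i}=(abcd;q)_i(abcdq^i;q)_i$, $(abcdq^i;q)_i=(abcdq^i;q)_{i-1}(1-abcdq^{2i-1})$, and $(abcdq^{i-1};q)_i=(1-abcdq^{i-1})(abcdq^i;q)_{i-1}$. Putting these together yields (\ref{E;e;bilinear(1)}).

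For (\ref{E;e;bilinear(2)}), I would do the symmetric computation: substitute (\ref{e;abcd;qinv(2)}) into $\<E_i,E_i\>_{L_{q^{-1}}}$, use the $\sigma=-$ case of (\ref{Lq_inv;bilinear;e;pm}), and apply the same $q$-factorial identities. The two boundary cases in (\ref{E;e;bilinear(3)}) are handled separately using (\ref{e;abcd;qinv(3)}): $E_0=\varepsilon^-_0=1$ gives $\<E_0,E_0\>_{L_{q^{-1}}}=1$ by the $\sigma=-$, $i=0$ case of (\ref{Lq_inv;bilinear;e;pm}), while $E_{-D}$ is a scalar multiple of $\varepsilon^+_{D-1}$, and the $\sigma=+$, $i=D-1$ case of (\ref{Lq_inv;bilinear;e;pm}) combined with another small rearrangement of $q$-factorials of length $D$ produces the stated formula.

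The core mechanism is therefore entirely linear; the only real bookkeeping is the $q$-shifted factorial identity above, which is the only nontrivial step. This is the step I expect to require the most care, since one must keep track of the indices in $(abcd;q)_{2i-1}$ versus $(abcd;q)_{2i}$ and the extra factors $1-abcdq^{i-1}$ and $1-abcdq^{2i-1}$; once this identity is established, each of the four equalities falls out by direct substitution.
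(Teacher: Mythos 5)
Your proposal is correct and follows essentially the same route as the paper: the paper's proof simply says to evaluate $\<E_{-i},E_{-i}\>_{L_{q^{-1}}}$ using (\ref{e;abcd;qinv(1)}) and (\ref{Lq_inv;bilinear;e;pm}) and simplify, with the other cases handled analogously. You have merely made explicit the $q$-shifted-factorial bookkeeping (the splittings of $(abcd;q)_{2i-1}$, $(abcd;q)_{2i}$, $(abcdq^{i-1};q)_i$) that the paper leaves to the reader, and these identities check out.
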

\begin{proof}
We first show (\ref{E;e;bilinear(1)}).
Evaluate the left-hand side of (\ref{E;e;bilinear(1)}) using
(\ref{e;abcd;qinv(1)}) and (\ref{Lq_inv;bilinear;e;pm}).
Simplify the result to get the right-hand side of (\ref{E;e;bilinear(1)}).
Lines (\ref{E;e;bilinear(2)}), (\ref{E;e;bilinear(3)}) are similarly obtained using (\ref{e;abcd;qinv(2)}), (\ref{e;abcd;qinv(3)}) togather with (\ref{Lq_inv;bilinear;e;pm}).
\end{proof}

From Proposition \ref{finite.version;Lem2.2}, 
we can view that the Laurent polynomials $E_i$ in Definition \ref{E;finite_version}
are a discrete analogue of the nonsymmetric Askey-Wilson 
polynomials in Definition \ref{Koornwinder;NonsymAW}. 
We further describe a discrete analogue of the eigenspace of $Y$ 
of the basic representation in Section \ref{NonsymAWpolys}.
Consider the $\Hqi$-module $L_{q^{-1}}$.
By construction each basis element $E_i$ is the eigenvector
of the action of $\X$ on $L_{q^{-1}}$.
With reference to Lemma \ref{Xinv-action;W'}, we visualize the eigenspaces of 
$\X^{-1}$ on $L_{q^{-1}}$ as follows.
Let $\eta=a^{-1/2}b^{-1/2}c^{-1/2}d^{-1/2}$.

\begin{center} 
\scalemath{0.65}{
\begin{tikzpicture}
  [scale=.8,thick,auto=left, every node/.style={circle}] 
  \node[fill=black,label=right:{\Large$q^{-1/2}\eta^{-1}, E_0$}] (n1) at (0,0) {};
  \node[draw, label=left:{\Large$q^{-1/2}\eta,E_{-1}$}] (n2) at (0,2)  {};
  \node[fill=black,label=right:{\Large$q^{1/2}\eta^{-1},E_1$}] (n3) at (2,2)  {};
  \node[draw,label=left:{\Large$q^{-3/2}\eta,E_{-2}$}] (n4) at (2,4) {};
  \node[fill=black,label=right:{\Large$q^{3/2}\eta^{-1},E_2$}] (n5) at (4,4)  {};
  \node[draw,label=left:{\Large$q^{-5/2}\eta,E_{-3}$}] (n6) at (4,6)  {};
  \node[fill=black,label=right:{\Large$q^{5/2}\eta^{-1},E_3$}] (n7) at (6,6)  {};
  \node[draw,label=left:{\Large$q^{-7/2}\eta, E_{-4}$}] (n8) at (6,8)  {};
  \node[fill=black,label=right:{\Large$q^{7/2}\eta^{-1}, E_4$}] (n9) at (8,8)  {};
  \node[draw,label=left:{\Large$$}] (n10) at (8,10)  {};
  \node[label=right:{\Large$$}] (n11) at (10,10)  {};  

  \foreach \from/\to in 
  {n1/n2,n2/n3,n3/n4,n4/n5,n5/n6,n6/n7,n7/n8,n8/n9}
      \draw (\from) -- (\to);
      \draw (n1) -- (n2) ;
      \draw (n3) -- (n4) ;
      \draw (n5) -- (n6) ;
      \draw (n7) -- (n8) ;
      \draw (n9) -- (n10) [dashed, ];
      \draw (n10) -- (n11) [dashed];
            
\end{tikzpicture}}\\
{Figure 3 : The eigenspaces of $\X^{-1}$}
\end{center}
Note that each white node represents the eigenspace of $\X^{-1}$ corresponding 
the eigenvalue $q^{-i+\frac{1}{2}}\eta$ and the eigenvector $E_{-i}$ for 
$i=1,2,\ldots, D$, and each black node represents the eigenspace of $\X^{-1}$ 
corresponding the eigenvalue $q^{i-\frac{1}{2}}\eta^{-1}$ and the eigenvector 
$E_i$ for $i=0,1,2,\ldots, D-1$. 
Compare Figure 3 with Figure 1 in Section \ref{NonsymAWpolys}.
Then one can see that Figure 3 coincides with Figure 1 
up to $(D-1)$-th horizontal edge. 
This is very natural since $\X^{-1}$ corresponds to $q^{1/2}\eta Y$ 
by Lemma \ref{mapping;xi}.

\section{Conclusion}

 In this paper we have studied certain Laurent polynomials, which are naturally obtained from a $Q$-polynomial distance-regular graph $\Ga$ of $q$-Racah type that contains a Delsarte clique.
Using an irreducible module of the universal DAHA $\H$ and $\Ga$, we proved the orthogonality relations for those polynomials.
Consequently, we showed that the above Laurent polynomials are a finite/ combinatorial analogue of the nonsymmetric Askey-Wilson polynomials.

\medskip
As we mentioned earlier in Section \ref{Intro}, 
the theorem of D. Leonard \cite{DL} (cf. \cite[Section~III.5]{BI}) characterized the terminating branch
of the Askey scheme \cite{KLS} of basic hypergeometric orthogonal polynomials
by the duality property of $\Ga$,
which has had a significant impact on the theory of orthogonal polynomials.
According to this theorem, the $q$-Racah polynomials are the most general self-dual
orthogonal polynomials in the above branch. 
Our results in the present paper can be thought of as a nonsymmetric version
of the $q$-Racah polynomials in the situation of Leonard's theorem.
We are planning to apply our results to the study of nonsymmetric version of 
other types of orthogonal polynomials in the terminating branch of the Askey scheme, such as Krawtchouk polynomials, using $\Ga$ of the corresponding type that contains a Delsarte clique. This will give a characterization of a nonsymmetric case of Leonard's theorem.

\section{Appendix}

In this Appendix we describe the $\H$-module structure
$\W(s,s^*,r_1,r_2,D;q)$ twisted via $\rho$ (see \S \ref{Section;UDAHA}) 
and display the action of $\Y$
on this module explicitly.
Recall the scalars $s, s^*, r_1, r_2, D, q$ from Note \ref{Note;S4}.

\subsection{An $\H$-module in terms of the scalars $s, s^*, r_1, r_2, D, q$.}\label{Apdx;H-mod}

\begin{definition}\label{H-mod;s,r,h}
(cf. \cite[Definition 11.1]{JHL})\label{H-module}
We define some matrices as follows.
\begin{itemize}
\item[(a)] For $1 \leq i \leq D-1$, the $(2 \times 2)$-matrix $t_0(i)$ is 
$$
\begin{bmatrix}
\frac{q^{D/2}(1-q^{i-D})(1-s^*q^{i+1})}{1-s^*q^{2i+1}} + \frac{1}{q^{D/2}} & &
\frac{q^{D/2}(q^{i-D}-1)(1-s^*q^{i+1})}{1-s^*q^{2i+1}} \vspace{0.3cm}\\
\frac{(1-q^i)(1-s^*q^{D+i+1})}{q^{D/2}(1-s^*q^{2i+1})} & &
\frac{(q^i-1)(1-s^*q^{D+i+1})}{q^{D/2}(1-s^*q^{2i+1})}+\frac{1}{q^{D/2}}
\end{bmatrix}
$$
and 
$$
t_0(0) = \left[ ~ \frac{1}{q^{D/2}}~\right], \qquad \qquad 
t_0(D) = \left[ ~ \frac{1}{q^{D/2}}~\right].
$$
\item[(b)] For $0 \leq i \leq D-1$, the $(2 \times 2)$-matrix $t_1(i)$ is 
$$
\begin{bmatrix}
\frac{1}{(s^*r_1r_2)^{1/2}}\left( \frac{(r_1-s^*q^{i+1})(r_2-s^*q^{i+1})}{1-s^*q^{2i+2}} + s^* \right) &&
-\left(\frac{s^*}{r_1r_2}\right)^{1/2}\frac{(1-r_1q^{i+1})(1-r_2q^{i+1})}{1-s^*q^{2i+2}}  \vspace{0.3cm}\\
\frac{1}{(s^*r_1r_2)^{1/2}} \frac{(r_1-s^*q^{i+1})(r_2-s^*q^{i+1})}{1-s^*q^{2i+2}} &&
\left(\frac{s^*}{r_1r_2}\right)^{1/2}\left(1-\frac{(1-r_1q^{i+1})(1-r_2q^{i+1})}{1-s^*q^{2i+2}}\right)
\end{bmatrix}.
$$
\item[(c)] $0 \leq i \leq D-1$, the $(2 \times 2)$-matrix $t_2(i)$ is 
$$
\begin{bmatrix}
\frac{1}{q^{i+1}(r_1r_2)^{1/2}}\left( 1-\frac{(1-r_1q^{i+1})(1-r_2q^{i+1})}{1-s^*q^{2i+2}} \right) & &
\frac{s^*q^{i+1}}{(r_1r_2)^{1/2}} \frac{(1-r_1q^{i+1})(1-r_2q^{i+1})}{1-s^*q^{2i+2}} \vspace{0.3cm}\\
-\frac{1}{s^*q^{i+1}(r_1r_2)^{1/2}}\frac{(r_1-s^*q^{i+1})(r_2-s^*q^{i+1})}{1-s^*q^{2i+2}} & &
\frac{q^{i+1}}{(r_1r_2)^{1/2}} \left( \frac{(r_1-s^*q^{i+1})(r_2-s^*q^{i+1})}{1-s^*q^{2i+2}} + s^*\right)
\end{bmatrix}.
$$
\item[(d)] For $1 \leq i \leq D-1$, the $(2 \times 2)$-matrix $t_3(i)$ is
$$
\begin{bmatrix}
\frac{1}{q^i(s^*q)^{1/2}} \left( \frac{(q^i-1)(1-s^*q^{D+i+1})}{q^{D/2}(1-s^*q^{2i+1})} + \frac{1}{q^{D/2}} \right) & & \frac{1}{q^i(s^*q)^{1/2}} \left( \frac{q^{D/2}(1-q^{i-D})(1-s^*q^{i+1})}{1-s^*q^{2i+1}} \right) \vspace{0.3cm}\\
{q^i(s^*q)^{1/2}} \left( \frac{(q^i-1)(1-s^*q^{D+i+1})}{q^{D/2}(1-s^*q^{2i+1})} \right) & &
{q^i(s^*q)^{1/2}} \left( \frac{q^{D/2}(1-q^{i-D})(1-s^*q^{i+1})}{1-s^*q^{2i+1}} + \frac{1}{q^{D/2}} \right)
\end{bmatrix}
$$ 
and
$$
t_3(0) = \left[ ~ (s^*q^{D+1})^{1/2}~\right], \qquad \qquad 
t_3(D) = \left[ ~ \tfrac{1}{(s^*q^{D+1})^{1/2}}~\right].
$$
\end{itemize}
\end{definition}

\noindent
With reference to Definition \ref{H-mod;s,r,h}, we define a $2D\times 2D$ block
diagonal matrix $\mcal{T}_n (n \in \I)$ as follows.
\begin{align*}
\mcal{T}_0 & :=  \bdiag\Big(t_0(0), t_0(1),\ldots, t_0(D-1), t_0(D)\Big),\\
\mcal{T}_1 & :=  \bdiag\Big(t_1(0), t_1(1),\ldots, t_1(D-1) \Big),\\
\mcal{T}_2 & :=  \bdiag\Big(t_2(0), t_2(1),\ldots, t_2(D-1) \Big),\\
\mcal{T}_3 & :=  \bdiag\Big(t_0(0), t_0(1),\ldots, t_0(D-1), t_0(D)\Big).
\end{align*}
One checks that (i) each $\mcal{T}_n (n \in \I)$ is invertible; 
(ii) $\mcal{T}_n+\mcal{T}_n^{-1} = (\k_n +\k_n)I$, where $\k_n$ is from (\ref{k_n});
(iii) $\mcal{T}_0\mcal{T}_1\mcal{T}_2\mcal{T}_3 = q^{-1/2}I$.
By this and Definition \ref{Def;UDAHA}, 
there exists an $\H$-module structure on $\W$ such that
for $n\in \I$ the matrix $\mcal{T}_n$ represents the generator $t_n$
relative to the basis $\{\hat{C}^{\pm}_i\}^{D-1}_{i=0}$ \cite[Proposition 11.10]{JHL}.

\subsection{the action of $\Y$}\label{Action;Y}

Referring to Section \ref{Apdx;H-mod}, we display
the action of $\Y$ on $\{\hat{C}^{\pm}_i\}^D_{i-1}$. For this section, we abbreviate 
$\W_q := \W(s,s^*,r_1,r_2,D;q)$.

\begin{lemma}\label{Y-action}{\rm (cf. \cite[Lemma 12.2]{JHL})} On $\W_q$,
\begin{enumerate}
\item[\rm(a)] For $0 \leq i \leq D-1$, the action $\Y.\hat{C}^-_i$ is given as a linear combination with the following terms and coefficients:
\begin{align*}
\begin{tabular}{l | l}
\text{\rm term} & \qquad \qquad \text{\rm coefficient} \\
\hline \hline
$\hat{C}^+_{i-1}$ & $\left( \frac{q^D}{s^*r_1r_2} \right)^{1/2} \left( \frac{(r_1-s^*q^{i+1})(r_2-s^*q^{i+1})}{1-s^*q^{2i+2}} + s^* \right)\left( \frac{(q^{i-D}-1)(1-s^*q^{i+1})}{1-s^*q^{2i+1}} \right) $ \\
\vspace{-0.3cm} ~ & ~\\
$\hat{C}^-_i$ & $\left( \frac{1}{q^Ds^*r_1r_2} \right)^{1/2} \left( \frac{(r_1-s^*q^{i+1})(r_2-s^*q^{i+1})}{1-s^*q^{2i+2}} + s^* \right)\left( \frac{(q^i-1)(1-s^*q^{D+i+1})}{1-s^*q^{2i+1}} +1 \right)$ \\
\vspace{-0.3cm} ~ & ~\\
$\hat{C}^{+}_i$ & $\left( \frac{q^D}{s^*r_1r_2} \right)^{1/2}\left(\frac{(r_1-s^*q^{i+1})(r_2-s^*q^{i+1})}{1-s^*q^{2i+2}}\right)\left( \frac{(1-q^{i+1-D})(1-s^*q^{i+2})}{1-s^*q^{2i+3}} + \frac{1}{q^D} \right)$\\
\vspace{-0.3cm} ~ & ~\\
$\hat{C}^{-}_{i+1}$ & $\left(\frac{1}{s^*r_1r_2q^D}\right)^{1/2}\frac{(1-q^{i+1})(1-s^*q^{D+i+2})(r_1-s^*q^{i+1})(r_2-s^*q^{i+1})}{(1-s^*q^{2i+2})(1-s^*q^{2i+3})}$ \\
\end{tabular},
\end{align*}
where $\hat{C}_{-1}^{+}=0$ and $\hat{C}^-_{D}=0$.

\item[\rm(b)] For $0 \leq i \leq D-1$, the action $\Y.\hat{C}^+_i$ is given as a linear combination with the following terms and coefficients:
\begin{align*}
\begin{tabular}{l | l}
\text{\rm term} & \qquad \qquad \text{\rm coefficient} \\
\hline \hline
$\hat{C}^+_{i-1}$ & $\left( \frac{s^*q^D}{r_1r_2} \right)^{1/2} \frac{(1-q^{i-D})(1-s^*q^{i+1})(1-r_1q^{i+1})(1-r_2q^{i+1})}{(1-s^*q^{2i+1})(1-s^*q^{2i+2})}$ \\
\vspace{-0.3cm} ~ & ~\\
$\hat{C}^-_i$ &  $-\left( \frac{s^*}{r_1r_2q^D} \right)^{1/2} \left(\frac{(1-r_1q^{i+1})(1-r_2q^{i+1})}{1-s^*q^{2i+2}}\right) \left( \frac{(q^i-1)(1-s^*q^{i+D+1})}{1-s^*q^{2i+1}} + 1\right)$\\
\vspace{-0.3cm} ~ & ~\\
$\hat{C}^{+}_i$ &  $\left( \frac{s^*q^D}{r_1r_2} \right)^{1/2} \left( 1-\frac{(1-r_1q^{i+1})(1-r_2q^{i+1})}{1-s^*q^{2i+2}} \right) \left( \frac{(1-q^{i+1-D})(1-s^*q^{i+2})}{1-s^*q^{2i+3}}+\frac{1}{q^D}\right)$ \\
\vspace{-0.3cm} ~ & ~\\
$\hat{C}^{-}_{i+1}$ & $\left( \frac{s^*}{r_1r_2q^D} \right)^{1/2} \left( 1 - \frac{(1-r_1q^{i+1})(1-r_2q^{i+1})}{1-s^*q^{2i+2}} \right)\left( \frac{(1-q^{i+1})(1-s^*q^{D+i+2})}{1-s^*q^{2i+3}} \right)$ \\
\end{tabular},
\end{align*}
where $\hat{C}_{-1}^{+}=0$ and $\hat{C}^-_{D}=0$.
\end{enumerate}
\end{lemma}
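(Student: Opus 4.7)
The plan is to compute $\Y = t_0 t_1$ directly on the basis $\{\hat{C}^{\pm}_i\}_{i=0}^{D-1}$ using the explicit matrix representations of $t_0$ and $t_1$ recorded in Definition \ref{H-mod;s,r,h}. In the ordered basis $\hat{C}^-_0, \hat{C}^+_0, \hat{C}^-_1, \hat{C}^+_1, \ldots, \hat{C}^-_{D-1}, \hat{C}^+_{D-1}$, the matrix $\mcal{T}_1$ is block diagonal with $2\times 2$ blocks $t_1(i)$ acting on each consecutive pair $(\hat{C}^-_i, \hat{C}^+_i)$, whereas $\mcal{T}_0$ is block diagonal but shifted by one position: its $1\times 1$ blocks $t_0(0), t_0(D)$ act as the scalar $q^{-D/2}$ on $\hat{C}^-_0$ and on $\hat{C}^+_{D-1}$, while the $2\times 2$ blocks $t_0(i)$ mix $(\hat{C}^+_{i-1}, \hat{C}^-_i)$ for $1 \leq i \leq D-1$. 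Because $\Y = t_0 t_1$, applying $t_1$ first and then $t_0$ sends $\hat{C}^\sigma_i$ to a linear combination of at most the four vectors $\hat{C}^+_{i-1}, \hat{C}^-_i, \hat{C}^+_i, \hat{C}^-_{i+1}$, which is exactly the support predicted by the tables in (a) and (b).

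For part (a), I would first read off from the first column of $t_1(i)$ the expansion $t_1.\hat{C}^-_i = (t_1(i))_{11}\hat{C}^-_i + (t_1(i))_{21}\hat{C}^+_i$, then apply $t_0$ termwise using the second column of $t_0(i)$ for $t_0.\hat{C}^-_i$ and the first column of $t_0(i+1)$ for $t_0.\hat{C}^+_i$. This yields
\begin{align*}
\Y.\hat{C}^-_i &= (t_1(i))_{11}(t_0(i))_{12}\,\hat{C}^+_{i-1} + (t_1(i))_{11}(t_0(i))_{22}\,\hat{C}^-_i \\
&\quad + (t_1(i))_{21}(t_0(i+1))_{11}\,\hat{C}^+_i + (t_1(i))_{21}(t_0(i+1))_{21}\,\hat{C}^-_{i+1},
\end{align*}
and the four scalar products on the right agree with the coefficients in the table of (a) after factoring out the common prefactors $(q^D/s^*r_1r_2)^{1/2}$ or $(1/s^*r_1r_2q^D)^{1/2}$. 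Part (b) is structurally identical, beginning instead from the second column of $t_1(i)$ to expand $t_1.\hat{C}^+_i$ and then proceeding in the same way.

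The boundary cases $i = 0$ and $i = D-1$ are absorbed into the statement via the convention $\hat{C}^+_{-1}=\hat{C}^-_D=0$: at $i=0$, the factor $(q^i-1)$ in $(t_0(i))_{22}$ kills the first summand so that $(t_0(i))_{22}$ reduces to $q^{-D/2}$, matching the $1\times 1$ block $t_0(0)$, and the phantom $\hat{C}^+_{-1}$ term drops out regardless of its coefficient; the factor $(1-q^{i+1-D})$ in $(t_0(i+1))_{11}$ performs the analogous collapse at $i=D-1$. The remaining obstacle is purely algebraic: one must simplify products of rational expressions in $q, s^*, r_1, r_2$ with denominators of the form $(1-s^*q^{2i+1})(1-s^*q^{2i+2})(1-s^*q^{2i+3})$ into the specific factored forms listed. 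The identity $q^{D/2}(1-q^{i-D})(1-s^*q^{i+1}) + q^{-D/2}(q^i-1)(1-s^*q^{D+i+1}) = (q^{D/2}-q^{-D/2})(1-s^*q^{2i+1})$, which follows from expanding and collecting powers of $q$ and which witnesses the eigenvalue relation $t_0 + t_0^{-1} = (\k_0+\k_0^{-1})I$, together with its analogue for $t_1$, streamlines these reductions and is essentially the only nontrivial algebraic ingredient beyond matrix multiplication.
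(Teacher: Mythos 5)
Your computation is correct, and it is the natural one: the paper itself offers no proof of this lemma, merely the citation ``(cf.\ [JHL, Lemma 12.2])'', so a direct block-matrix verification from Definition \ref{H-mod;s,r,h} is exactly what is called for. Your bookkeeping of the block structure is right --- $t_1(i)$ pairs $(\hat{C}^-_i,\hat{C}^+_i)$ while $t_0(i)$ pairs $(\hat{C}^+_{i-1},\hat{C}^-_i)$ with scalar blocks at the two ends --- and the column convention you use is the correct one (it can be cross-checked against Lemma \ref{X-action;W}: $t_3(0)t_0(0)=q^{-D/2}(s^*q^{D+1})^{1/2}=(s^*q)^{1/2}$, the stated eigenvalue of $\X$ on $\hat{C}^-_0$). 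The four coefficients in each table are then literally the products $(t_1(i))_{\ast 1}(t_0(i))_{\ast 2}$, $(t_1(i))_{\ast 1}(t_0(i+1))_{\ast 1}$, etc., with the square-root prefactors combined; I checked all eight and they match. One small remark: your closing paragraph overstates the remaining work. No rational-function simplification is actually needed --- the entries of the tables are written precisely as the unreduced products of the matrix entries, so the identity $q^{D/2}(1-q^{i-D})(1-s^*q^{i+1})+q^{-D/2}(q^i-1)(1-s^*q^{D+i+1})=(q^{D/2}-q^{-D/2})(1-s^*q^{2i+1})$, while true (it encodes $t_0+t_0^{-1}=(\k_0+\k_0^{-1})I$ on the $2\times2$ block), is never invoked. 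Your handling of the boundary cases via the conventions $\hat{C}^+_{-1}=\hat{C}^-_D=0$ is also correct.
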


\begin{lemma}\label{Yinv-action}{\rm (cf. \cite[Lemma 12.3]{JHL})}  On $\W_q$,
\begin{enumerate}
\item[\rm(a)] For $0 \leq i \leq D-1$, the action $\Y^{-1}.\hat{C}^-_i$ is given as a linear combination with the following terms and coefficients:
\begin{align*}
\begin{tabular}{l | l}
\text{\rm term} & \qquad \qquad \text{\rm coefficient} \\
\hline \hline
$\hat{C}^-_{i-1}$ &  $\left( \frac{s^*q^D}{r_1r_2} \right)^{1/2} \frac{(1-q^{i-D})(1-s^*q^{i+1})(1-r_1q^i)(1-r_2q^i)}{(1-s^*q^{2i})(1-s^*q^{2i+1})}$\\
\vspace{-0.3cm} ~ & ~  \\
$\hat{C}^+_{i-1}$ & $\left( \frac{q^D}{s^*r_1r_2} \right)^{1/2} \left( \frac{(1-q^{i-D})(1-s^*q^{i+1})}{1-s^*q^{2i+1}} \right)\left( \frac{(r_1-s^*q^{i})(r_2-s^*q^i)}{1-s^*q^{2i}} + s^* \right)$\\
\vspace{-0.3cm} ~ & ~\\
$\hat{C}^{-}_i$ & $\left( \frac{s^*q^D}{r_1r_2} \right)^{1/2} \left( \frac{(1-q^{i-D})(1-s^*q^{i+1})}{1-s^*q^{2i+1}} + \frac{1}{q^D}\right)\left( 1-\frac{(1-r_1q^{i+1})(1-r_2q^{i+1})}{1-s^*q^{2i+2}} \right)$\\
\vspace{-0.3cm} ~ & ~\\
$\hat{C}^{+}_{i}$ & $-\left( \frac{q^D}{s^*r_1r_2} \right)^{1/2} \left( \frac{(1-q^{i-D})(1-s^*q^{i+1})}{1-s^*q^{2i+1}} + \frac{1}{q^D} \right)\left( \frac{(r_1-s^*q^{i+1})(r_2-s^*q^{i+1})}{1-s^*q^{2i+2}} \right)$ \\
\end{tabular},
\end{align*}
where $\hat{C}_{-1}^{-}=0$ and $\hat{C}^+_{-1}=0$.

\item[\rm(b)] For $0 \leq i \leq D-1$, the action $\Y^{-1}.\hat{C}^+_i$ is given as a linear combination with the following terms and coefficients:
\begin{align*}
\begin{tabular}{l | l}
\text{\rm term} & \qquad \qquad \text{\rm coefficient} \\
\hline \hline
$\hat{C}^-_{i}$ &  $\left( \frac{q^D}{s^*r_1r_2} \right)^{1/2} \left( \frac{(q^{i+1}-1)(1-s^*q^{D+i+2})}{1-s^*q^{2i+3}}+1 \right)\left( \frac{(1-r_1q^{i+1})(1-r_2q^{i+1})}{1-s^*q^{2i+2}} \right)$\\
\vspace{-0.3cm} ~ & ~  \\
$\hat{C}^+_{i}$ & $\left( \frac{1}{s^*r_1r_2q^D} \right)^{1/2}\left(\frac{(q^{i+1}-1)(1-s^*q^{D+i+2})}{1-s^*q^{2i+3}}+1\right)\left( \frac{(r_1-s^*q^{i+1})(r_2-s^*q^{i+1})}{1-s^*q^{2i+2}}+s^* \right)$\\
\vspace{-0.3cm} ~ & ~\\
$\hat{C}^{-}_{i+1}$ & $\left(\frac{s^*}{r_1r_2q^D}\right)^{1/2} \left( \frac{(1-q^{i+1})(1-s^*q^{D+i+2})}{1-s^*q^{2i+3}} \right)\left( \frac{(1-r_1q^{i+2})(1-r_2q^{i+2})}{1-s^*q^{2i+4}} -1 \right)$\\
\vspace{-0.3cm} ~ & ~\\
$\hat{C}^{+}_{i+1}$ & $\left( \frac{1}{s^*r_1r_2q^D} \right)^{1/2} \frac{(1-q^{i+1})(1-s^*q^{D+i+2})(r_1-s^*q^{i+2})(r_2-s^*q^{i+2})}{(1-s^*q^{2i+3})(1-s^*q^{2i+4})}$\\
\end{tabular},
\end{align*}
where $\hat{C}_{-1}^{-}=0$ and $\hat{C}^+_{-1}=0$.
\end{enumerate}
\end{lemma}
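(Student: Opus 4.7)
The plan is to compute $\Y^{-1}$ acting on each basis vector $\hat{C}^\sigma_i$ directly from the matrix representations of $t_0$ and $t_1$ given in Appendix \ref{Apdx;H-mod} (Definition \ref{H-mod;s,r,h}), using the factorization $\Y^{-1}=t_1^{-1}t_0^{-1}$. The first step is to write $t_0^{-1}$ and $t_1^{-1}$ explicitly. By Lemma \ref{kn;diagonalizable} and Definition \ref{Def;UDAHA}(ii), on $\W$ each generator satisfies $t_n+t_n^{-1}=(\kappa_n+\kappa_n^{-1})I$; hence $t_n^{-1}=(\kappa_n+\kappa_n^{-1})I-t_n$. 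Applying this block-by-block, the $2\times 2$ blocks of $t_0^{-1}$ (resp. $t_1^{-1}$) are obtained from those of $t_0$ (resp. $t_1$) by negating off-diagonal entries and replacing each diagonal entry $d$ by $(\kappa_n+\kappa_n^{-1})-d$; the scalar blocks $t_0(0)$ and $t_0(D)$ simply invert.

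Next I would track which blocks are hit. Recall that the $t_0$-blocks are aligned with the pairs $\{\hat{C}^+_{i-1},\hat{C}^-_i\}$ for $1\le i\le D-1$, with $\hat{C}^-_0$ and $\hat{C}^+_{D-1}$ being fixed up to a scalar, while the $t_1$-blocks are aligned with $\{\hat{C}^-_i,\hat{C}^+_i\}$ for $0\le i\le D-1$. Therefore, for $\Y^{-1}.\hat{C}^-_i$ one first applies $t_0^{-1}$, producing a combination of $\hat{C}^+_{i-1}$ and $\hat{C}^-_i$; then $t_1^{-1}$ sends $\hat{C}^+_{i-1}$ into $\mathrm{span}\{\hat{C}^-_{i-1},\hat{C}^+_{i-1}\}$ and $\hat{C}^-_i$ into $\mathrm{span}\{\hat{C}^-_i,\hat{C}^+_i\}$, yielding the four-term combination in part (a). An entirely parallel argument, with $t_0^{-1}$ acting on $\hat{C}^+_i$ via the block for $\{\hat{C}^+_i,\hat{C}^-_{i+1}\}$ followed by $t_1^{-1}$ on each of those two vectors, gives the four-term combination in part (b). The edge cases $i=0$ and $i=D-1$ are handled using the scalar blocks $t_0(0)=[q^{-D/2}]$ and $t_0(D)=[q^{-D/2}]$, which force the vanishing conventions $\hat{C}^-_{-1}=0$ and $\hat{C}^+_{-1}=0$ that are built into the statement.

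The final step is to read off the coefficients by multiplying out the relevant $2\times 2$ matrices and simplifying. The main obstacle is bookkeeping: each entry listed in the tables is a product of three factors (a prefactor depending on $\kappa_0,\kappa_1$, a factor coming from the $t_0^{-1}$-block, and a factor coming from the $t_1^{-1}$-block), and matching the displayed form requires repeated use of the identities
\[
\kappa_0+\kappa_0^{-1}=q^{D/2}+q^{-D/2},\qquad \kappa_1+\kappa_1^{-1}=\left(\tfrac{r_1r_2}{s^*}\right)^{1/2}+\left(\tfrac{s^*}{r_1r_2}\right)^{1/2},
\]
together with elementary rearrangements of factors of the form $\frac{(1-r_jq^{i+1})(1-r_kq^{i+1})}{1-s^*q^{2i+2}}$ and $\frac{(r_j-s^*q^{i+1})(r_k-s^*q^{i+1})}{1-s^*q^{2i+2}}$. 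Since this is the $q^{-1}$-counterpart of Lemma \ref{Y-action}, one can cross-check each row against the formula for $\Y$ via the identity $\Y\cdot\Y^{-1}=I$ on each basis vector, which serves as an independent consistency test on the computation.
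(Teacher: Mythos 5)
Your strategy is sound and would establish the lemma, but it is worth noting that the paper itself offers no argument here at all: Lemma \ref{Yinv-action} is simply carried over from \cite[Lemma 12.3]{JHL} (transported through the twist $\rho$ of Section \ref{Section;UDAHA}), so your self-contained verification from the block matrices of Definition \ref{H-mod;s,r,h} is a genuinely different — and arguably more useful — route. Your key ingredients are all correct: $\Y^{-1}=t_1^{-1}t_0^{-1}$, the identity $t_n^{-1}=(\kappa_n+\kappa_n^{-1})I-t_n$ on the irreducible module (so each $2\times2$ block inverts by the trace trick), the alignment of the $t_0$-blocks with the pairs $\{\hat{C}^+_{i-1},\hat{C}^-_i\}$ and of the $t_1$-blocks with $\{\hat{C}^-_i,\hat{C}^+_i\}$, and hence the correct four-term support in each of (a) and (b), including the degenerate cases coming from the scalar blocks $t_0(0)$ and $t_0(D)$. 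Two caveats. First, the entire content of the lemma is the eight explicit coefficients, and you stop at "multiply out and simplify" without performing that arithmetic; you should also fix the row-versus-column convention for "the matrix representing $t_n$ relative to $\{\hat{C}^{\pm}_i\}^{D-1}_{i=0}$" before reading entries off, since a transposition error would silently swap coefficients within each block. Second, there is a shortcut that bypasses the matrix inversion and multiplication altogether: since $\A=\Y+\Y^{-1}$, one has $\Y^{-1}=\A-\Y$, and by Theorem \ref{MainThm;JHL}(i) the action of $\A$ on $\hat{C}^{\pm}_i$ is $h^{-1}(sq)^{-1/2}\bigl(A-(\tht_0-h-hsq)I\bigr)$, which is known explicitly from the equitable partition; subtracting the table of Lemma \ref{Y-action} then yields the table of Lemma \ref{Yinv-action} row by row. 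This is essentially the consistency check you propose at the end, promoted to the proof itself, and it reduces the bookkeeping considerably.
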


\section{Acknowledgements}
The author would like to thank Paul Terwilliger and Hajime Tanaka 
for many helpful discussions and comments. 
In particular, Hajime Tanaka gave valuable ideas and suggestions
for Sections 8, 9.
The author also thanks the two anonymous referees for careful reading and
helpful comments.
This work is supported by the JSPS KAKENHI; grant numbers: $26\cdot04019$.


\bigskip \bigskip
{\footnotesize \sc Research Center for Pure and Applied Mathematics,
Graduate School of Information Sciences,
Tohoku University,
6-3-09 Aramaki-Aza-Aoba, Aoba-ku, Sendai 980-8579, Japan. 

{\it Email address}: {\tt jhlee@ims.is.tohoku.ac.jp }}

\end{document}